\documentclass[11pt]{article}

\let\al=\alpha
\let\b=\beta

\let\la=\lambda

\let\om=\omega
\let\G= \Gamma
\let\pa=\partial
\let\Om=\Omega



\def\vec g{\mathbf g}

\def\rh{\rho h}

\def\Xi{\rh W^2}

\renewcommand{\vec}[1]{\mbox{\boldmath \small $#1$}}

\usepackage{graphicx}
\usepackage{subfigure}
\usepackage{amsmath}
\usepackage{amsfonts}
\usepackage{mathrsfs}
\usepackage{amsbsy}
\usepackage{mathrsfs}
\usepackage{latexsym}
\usepackage{multirow,fancybox}
\usepackage{color}
\usepackage{epstopdf} 
\newtheorem{Theorem}{Theorem}[section]

\newtheorem{lemma}[Theorem]{Lemma}

\newtheorem{example}{Example}[section]
\newenvironment{proof}[1][Proof]{\begin{trivlist}
\item[\hskip \labelsep {\bfseries #1}]}{\end{trivlist}}

\newcommand{\qed}{\hfill \nobreak \ifvmode \relax \else
	\ifdim\lastskip<1.5em \hskip-\lastskip
	\hskip1.5em plus0em minus0.5em \fi \nobreak
	\vrule height0.75em width0.5em depth0.25em\fi}

\usepackage{geometry}
\geometry{a4paper,left=2cm,right=2cm,top=1cm,bottom=2cm}
\begin{document}

\markboth{Y.H.~Yuan \& H.Z.~Tang}{Two-stage 4th-order time discretizations}

\title{Two-stage fourth-order  accurate time discretizations for 1D and 2D special relativistic hydrodynamics}

\author{Yuhuan Yuan and 
Huazhong Tang   
\\  HEDPS, CAPT \& LMAM, School of Mathematical Sciences, \\
 Peking University,
Beijing 100871, 
 P.R. China }

\maketitle

\begin{abstract}
	This paper studies the two-stage fourth-order accurate
	time { discretization [J.Q. Li and Z.F. Du, {\em SIAM J. Sci. Comput.}, 38(2016)] and}   its application
	to the special relativistic hydrodynamical equations.
	Our analysis reveals that { the} new two-stage fourth-order accurate time discretizations
can be proposed.
	With the aid of the direct Eulerian GRP (generalized Riemann
	problem) methods
	and  the analytical resolution of  the local ``quasi 1D'' GRP,
	the two-stage fourth-order accurate time discretizations are successfully  implemented
	for the 1D and 2D special relativistic hydrodynamical equations. Several  numerical
	experiments demonstrate the performance and accuracy as well as robustness of our
	schemes.
\end{abstract}


{\bf Keywords}:
	Time discretization, shock-capturing scheme, GRP method, relativistic hydro-dynamics, hyperbolic conservation laws.

\section{Introduction}
\label{sec:into}
The relativistic hydrodynamics (RHD)
plays the leading role  in astrophysics and nuclear physics etc. The RHDs
is necessary in situations where the local velocity of the
flow is close to the light speed in vacuum, or where the local internal energy density is comparable (or larger) than the local rest mass density of  fluid.
The paper is concerned with  developing higher-order accurate numerical schemes
for {the} 1D and 2D special RHDs.
The $d$-dimensional governing equations of the special RHDs is a first-order quasilinear
hyperbolic system.  In the laboratory frame, it can be written
{into} the divergence form
\begin{align}\label{eq:rhd-eq001}
\frac{\partial \vec U}{\partial\vec  t}
+\sum_{i=1}^d \frac{\partial \vec F_i(\vec U)}{\partial  x_i}=0,
\end{align}
where $d=1$ or 2,  and $\vec U$ and $\vec F_i(\vec U)$  denote the conservative vector and the
flux in the $x_i$-direction, respectively, defined by
\begin{align}
\vec U=(D,\vec m, E)^T, \
\vec F_i(\vec U)=(D v_i, v_i \vec m+ p \vec e_i, m_i)^T, i=1,\cdots,d,
\end{align}
with the mass density $D = \rho W$, the momentum density (row) vector $\vec m = D hW\vec v$, the energy density $E=D h W - p$, and
the row vector $\vec e_i$ denoting the $i$-th row of the unit matrix of size 2.
Here $\rho$ is the rest-mass density, { $v_i$} denotes
the fluid velocity in the $x_i$-direction,  $p$ is  the gas pressure,
$W=1/\sqrt{1- v^2}$ is the Lorentz factor with $v:=\left(v_1^2+\cdots+v_d^2\right)^{1/2}$,
$h$ is the {specific enthalpy} defined by
$$h = 1 + e + \frac{p}{\rho},$$
with units in which the speed of light $c$ is equal to one, and $e$ is the specific internal energy.
Throughout the paper, the equation of state (EOS) will be restricted to the $\Gamma$-law
\begin{equation}\label{eq:EOS}
p = (\Gamma-1) \rho e,
\end{equation}
where the adiabatic index $\Gamma \in (1,2]$. The restriction of $\Gamma\le 2$ is required for the compressibility assumptions   and the causality in the theory of relativity (the sound speed does not exceed the speed of light {$c=1$}).

The RHD equations \eqref{eq:rhd-eq001} are highly nonlinear so that their analytical treatment is
extremely difficult. Numerical computation has become a major way in studying RHDs.
The pioneering numerical work {can} date back to
the Lagrangian finite difference code via artificial viscosity for the
spherically symmetric general RHD equations
\cite{May-White1966,May-White1967}.
Multi-dimensional RHD equations  were first solved in \cite{Wilson:1972} by using the
Eulerian   finite difference method
with the artificial viscosity technique.
Later, modern shock-capturing methods were extended to the RHD (including RMHD) equations.
{Some representative  methods} are the HLL (Harten-Lax-van Leer) scheme \cite{Zanna:2003}, HLLC (HLLC contact) schemes \cite{MignoneHLLCRMHD,Honkkila:2007}, Riemann solver \cite{Balsara1994},  approximate Riemann solvers based on the local linearization \cite{GodunovRMHD,Koldoba:2002},
second-order
GRP (generalized Riemann problem) schemes \cite{Yang-He-Tang:2011,Yang-Tang:2012,Wu-Tang:2016}, third-order GRP scheme
\cite{Wu-Yang-Tang:2014b},  locally evolution Galerkin method \cite{WuEGRHD},
discontinuous Galerkin (DG) methods \cite{Zhao-Tang:2017a,Zhao-Tang:2017b},
gas-kinetic schemes {(GKS)} \cite{QamarKinetic2004,Chen-Kuang-Tang:2017},  adaptive mesh refinement methods \cite{Anderson:2006,Host:2008},
and    moving mesh methods
\cite{HeAdaptiveRHD,HeAdaptiveRMHD} etc.
Recently, the higher-order accurate physical-constraints-preserving (PCP) WENO (weighted essentially non-oscillatory) and DG
schemes were developed for the special RHD equations \cite{Wu-Tang-JCP2015,Wu-Tang-RHD2016,QinShuYang-JCP2016}.
They were built on studying the  admissible state set of {the} special RHDs.
The  admissible state set  and   PCP schemes of
{the} special ideal RMHDs were also studied
for the first time in \cite{Wu-Tang-RMHD2016}, where  the importance of divergence-free fields
was revealed in achieving PCP methods.  Those works were  successfully extended to
the special RHDs with a general equation of state \cite{Wu-Tang-RMHD2017b,Wu-Tang-RHD2016}
and the general RHDs \cite{Wu-PRD2017}.

In comparison with second-order shock-capturing schemes,
the higher-order methods can provide more accurate solutions, but they are less robust and more complicated. For most of the existing higher-order methods, the Runge-Kutta time discretization
is usually used to achieve higher order temporal accuracy. For example, a four-stage fourth-order  Runge-Kutta   method (see e.g. \cite{Zhao-Tang:2017b}) is  used to achieve a fourth-order time accuracy.
{If each stage of the
time discretization needs to call the Riemann solver or the resolution of the local GRP, then the shock-capturing scheme  with multi-stage time discretization for \eqref{eq:rhd-eq001} is very time-consuming.}
Recently, based on the time-dependent flux function of the GRP, a two-stage fourth-order  accurate  time discretization was developed for Lax-Wendroff (LW) type  flow solvers, particularly applied for the hyperbolic conservation laws \cite{LI-DU:2016}. Such two-stage LW time stepping method does also provide an alternative framework for the development of a fourth-order GKS with a second-order flux function \cite{PXLL:2016}.
%

The aim of this paper  is to  study the two-stage fourth-order accurate time discretization {\cite{LI-DU:2016}} and
its application to the special {RHD} equations {\eqref{eq:rhd-eq001}}. {Based our analysis, the new}
two-stage fourth-order accurate time discretizations {can}  be proposed. With the aid of
the direct Eulerian 
GRP  methods and  the analytical resolution of  the local ``quasi 1D'' GRP, those two-stage
fourth-order accurate time discretizations can be conveniently implemented for the
special {RHD} equations. {Their performance and accuracy as well as robustness can be demonstrated by numerical experiments}.
The paper
is organized as follows. 
Section \ref{sec:scheme}  studies   the two-stage fourth-order accurate time discretizations
and applies them to  the special RHD equations.
Section \ref{sec:example} conducts several numerical experiments
to demonstrate the accuracy and efficiency of the proposed methods.
Conclusions are given in Section \ref{Section-conclusion}.

%

\section{Numerical methods}
\label{sec:scheme}
{In this section, we study the two-stage fourth-order accurate time discretization   \cite{LI-DU:2016} and then propose the new
	two-stage fourth-order accurate time discretizations.}
With the aid of
the direct Eulerian GRP  methods, those two-stage
time discretizations {can be}  implemented for the
special RHD equations \eqref{eq:rhd-eq001}.

\subsection{Two-stage fourth-order  time discretizations}\label{subsec:timedis}
Consider the time-dependent differential equation
\begin{align}\label{eq:ode01}
u_t=L(u),\ t>0,
\end{align}
which {can} be a semi-discrete scheme for the conservation laws \eqref{eq:rhd-eq001}.

Assume that the solution $u$ of \eqref{eq:ode01} is a fully smooth function of $t$
and $L(u)$ is also fully smooth, and
give a partition of the time interval by $t_n=n\tau$,
where  $\tau$ denotes the time stepsize and $n\in \mathbb Z$.
The Taylor series expansion of $u$ in $t$ reads
\begin{align}\nonumber
u^{n+1}=&\Big(u +\tau u_t +\frac{\tau^2}{2!} u_{tt}
+   \frac{\tau^3}{3!} u_{ttt}+\frac{\tau^4}{4!} u_{tttt}\Big )^n+{\mathcal O}(\tau^5)
\\ \label{eq:add-decomp}
=&\Big(u +\tau u_t +\frac{\tau^2}{6} u_{tt}\Big)^n
+{  2} \frac{\tau^2}{6} \Big( \big(  u
+ \frac{\tau}{2}  u_{t}+\frac{\tau^2}{8} u_{tt}\big)_{tt} \Big )^n+{\mathcal O}(\tau^5).
\end{align}
Substituting \eqref{eq:ode01} {into} \eqref{eq:add-decomp} gives
\begin{align}\nonumber
u^{n+1}=&\Big(u +\tau L(u) +\frac{\tau^2}{6} \partial_t L(u)\Big)^n
+ {  2} \frac{\tau^2}{6} \Big( \big(  u
+ \frac{\tau}{2}  L(u)+\frac{\tau^2}{8} \partial_t L(u)\big)_{tt} \Big )^n+{\mathcal O}(\tau^5)
\\  \label{eq:add-decomp02}
=&\Big(u +\tau L(u) +\frac{\tau^2}{6} \partial_t L(u)\Big)^n
+ {  2} \frac{\tau^2}{6} \left( (u^*)_{tt} \right)^n+{\mathcal O}(\tau^5),
\end{align}
where
\begin{align*}
u^*:= u + \frac{1}{2} \tau L(u) +  \frac{1}{8} \tau^2  L_t(u) =u\left(t_n+\frac{\tau}2\right)+{\mathcal O}(\tau^3).
\end{align*}
Because
\begin{align*}
u^*_t=& u_t + \frac{1}{2} \tau L_t(u) +  \frac{1}{8} \tau^2   L_{tt}(u)
=L(u)+\frac{1}{2} \tau L_u L(u)+\frac{1}{8} \tau^2
\left( (L_u)^2L+ L_{uu} L^2  \right),
\\
L(u^*)
=&L(u)+L_u \left(\frac{1}{2} \tau L(u) +  \frac{1}{8} \tau^2   L_t(u)\right)
+\frac12 L_{uu} \left(\frac{1}{2} \tau L(u) +  \frac{1}{8} \tau^2   L_t(u)\right)^2 {+\cdots},
\end{align*}
one has
\begin{align*}
u^*_t=L(u^*)+{\mathcal O}(\tau^3), \ u^*_{tt}=L_t(u^*)+{\mathcal O}(\tau^3).
\end{align*}
Combining the second equation  with \eqref{eq:add-decomp02} gives
\begin{align*}
u^{n+1}
=&\Big(u +\tau L(u) +\frac{\tau^2}{6} \partial_t L(u)\Big)^n
+ { 2} \frac{\tau^2}{6} \left( L_t(u^*) \right)^n+{\mathcal O}(\tau^5).
\end{align*}

The above discussion gives  the two-stage fourth-order accurate time discretization of \eqref{eq:ode01}  \cite{LI-DU:2016}:
\begin{itemize}
	\item[Step 1.] Compute the intermediate value
	\begin{equation} \label{eq:1stage}
	u^* = u^n + \frac{1}{2} \tau L(u^n) +  \frac{1}{8} \tau^2 \frac{\pa}{\pa t} L(u^n), 
	\end{equation}
	\item[Step 2.] Evolve the solution at $t_{n+1}$ via
	\begin{equation}
	u^{n+1} = u^n + \tau L(u^n)
	+ \frac{1}{6} \tau^2\left( \frac{\pa}{\pa t} L(u^n) + { 2} \frac{\pa}{\pa t} L(u^*)  \right).
	\end{equation}
\end{itemize}

Obviously, the   additive decomposition in \eqref{eq:add-decomp} is not unique. For example, it can be  replaced {with} a more general decomposition
\begin{align}
u^{n+1}
=&\Big(u +\tau L(u) +\frac{{\alpha}\tau^2}{2} \partial_t L(u)\Big)^n
+ { } \frac{(1-\alpha)\tau^2}{2} \Big( \big(  \tilde{u}^*\big)_{tt} \Big )^n+ {  O(\tau^5) },
\label{EQ:decom03}\end{align}
{with} $\alpha\neq 1$
and
\begin{align}\label{EQ:decom03a}
\tilde{u}^*:= u+ \frac{\tau}{3(1-\alpha)}  L(u)+\frac{\tau^2}{12(1-\alpha)} \partial_t L(u).
\end{align}
{If} $\alpha=\frac13$, then {\eqref{EQ:decom03} becomes the  additive  decomposition in \eqref{eq:add-decomp}} for  the two-stage fourth-order time discretization \cite{LI-DU:2016}.

{The identity \eqref{EQ:decom03a}} implies
\begin{align}\label{EQ:decom03b}
\tilde{u}^*_t=& L(u)+ \frac{\tau}{3(1-\alpha)}  L_t(u)+\frac{\tau^2}{12(1-\alpha)} \partial_{tt} L(u).
\end{align}
{Comparing \eqref{EQ:decom03b} to the following Taylor series expansion}
\begin{align*}
L(\tilde{u}^*)=& L(u)+ L_u\left( \frac{\tau}{3(1-\alpha)} L  +\frac{\tau^2}{12(1-\alpha)} L_t(u)\right)+\frac12 L_{uu} \left( \frac{\tau}{3(1-\alpha)} L  +\frac{\tau^2}{12(1-\alpha)} L_t(u)\right)^2 {+\cdots},
\end{align*}
{ gives
	\begin{align*}
	\tilde{u}^*_t=L(\tilde{u}^*)+\frac{\tau^2}{12(1-\alpha)} \left(1-\frac{2}{3(1-\alpha)}\right)
	L_{uu} (u_t)^2+{\mathcal O}(\tau^3).
	\end{align*}
	If
	\begin{align}\label{EQ:decom03z}
	1-\frac{2}{3(1-\alpha)}=C \tau^p, \ p\geq 1,
	\end{align}
	where  $C$ is independent on $\tau$,
	then
	\begin{align*}
	\tilde{u}^*_t=L(\tilde{u}^*)+{\mathcal O}(\tau^3).
	\end{align*}
{Therefore}, if $\alpha=\alpha(\hat{\tau})$ is a differentiable function of $\hat{\tau}$
	and  satisfies
	$\alpha(0)=1/3$, $\alpha\neq 1$, and $\hat{\tau}=C \tau^p$,
	then \eqref{EQ:decom03z} does hold.
	For example, we may choose $\alpha=(1-6\hat{\tau})/(3-6\hat{\tau})$  with $\hat{\tau}\neq 1/2$.
	At this time, one has
	\begin{align*}
	\tilde{u}^*= u\left(t+\frac{\tau}{3(1-\alpha)}\right)+{\mathcal O}(\tau^3),
	\end{align*}
	and similarly, from \eqref{EQ:decom03a} and the Taylor series expansion
	of $L_t\left(\tilde{u}^*\right)$ at $u$, one can get
	\begin{align}\label{EQ:00178}
	\tilde{u}^*_{tt}=
	L_t\left(\tilde{u}^*\right)+{\mathcal O}(\tau^3).
	\end{align}
}
Substituting \eqref{EQ:00178} into \eqref{EQ:decom03} gives
\begin{align}\nonumber
u^{n+1}
=&\Big(u +\tau L(u) +\frac{{\alpha}\tau^2}{2} \partial_t L(u)\Big)^n
+  \frac{{ (1-\alpha)}\tau^2}{2} \Big( \partial_t L(\tilde{u}^*) \Big )^n+{\mathcal O}(\tau^5).
\label{EQ:decom02}\end{align}
{In conclusion,
	when $\alpha=\alpha(\hat{\tau})$ is a differentiable function of $\hat{\tau}=C \tau^p$
	and  satisfies $\alpha(0)=1/3$ and $\alpha\neq 1$,
	where $p\geq 1$ and $C$ is independent on $\tau$,
	the   additive decomposition  \eqref{EQ:decom03} can give new two-stage fourth-order
	accurate time discretizations as follows:
\begin{itemize}
	\item[Step 1.] Compute the intermediate value
	\begin{equation} \label{eq:1stage02}
	u^* = u^n + \frac{1}{3(1-\al)} \tau L(u^n) +  \frac{\tau^2 }{12(1-\al)} \frac{\pa}{\pa t} L(u^n), 
	\end{equation}
	\item[Step 2.] Evolve the solution at $t_{n+1}$ via
	\begin{equation}
	u^{n+1} = u^n + \tau L(u^n)
	+ \frac{\al\tau^2}{2} \frac{\pa}{\pa t} L(u^n) + \frac{(1-\al)\tau^2}{2} \frac{\pa}{\pa t} L(u^*) .
	\end{equation}
\end{itemize}
}

\subsection{Application of two-stage time discretizations to 1D RHD equations}
\label{subsec:method-1d}
{In this section, we apply the above two-stage fourth-order time discretizations to the 1D RHD equations, i.e. \eqref{eq:rhd-eq001} with $d=1$.}
For the sake of convenience, the symbols $x_1$ and $v_1$ {are} replaced with $x$ and $u$, respectively, and  a uniform partition of the spatial domain
is given  by  $I_j = (x_{j-\frac{1}{2}} ,\, x_{j+\frac{1}{2}})$ with $\Delta x=x_{j+\frac{1}{2}}-x_{j-\frac{1}{2}}$.

For the given ``initial'' approximate cell averages $\{\overline{\vec U}^n_j\}$ at $t_n$,
{we want to reconstruct the WENO  values of $\vec U$ and $\partial_x\vec U$ at the cell boundaries,
	denoted by $\vec U^{\pm,n}_{j+\frac12}$
	and $(\partial_x \vec U)^{\pm,n}_{j+\frac12}$.
	Our initial reconstruction procedure is given as follows:
	\begin{description}
		\item[(1)] Use the standard 5th-order WENO reconstruction \cite{Jiang-Shu1996} to get
$\vec U^{\pm,n}_{j+\frac12}$ with the aid of the characteristic decomposition. If $\vec U^{\pm,n}_{j+\frac12}$ does not belong to the admissible state set of 1D RHD equations \cite{Wu-Tang-JCP2015}, then we set $\vec U^{\pm,n}_{j+\frac12} = \overline{\vec U}_j^n$ directly  in order to avoid the nonphysical solution as soon as possible.
		
		\item[(2)] Calculate $(\overline{\partial_x \vec U})_j^n = \frac{1}{\Delta x}( \vec U^{-,n}_{j+\frac12} - \vec U^{+,n}_{j-\frac12} )$, which is the approximate cell average value of $\partial_x\vec U$
over the cell $I_j$, and then use the above WENO again to get $(\partial_x \vec U)^{\pm,n}_{j+\frac12}$.  
		
		
	\end{description}
}
{Such initial} reconstruction is also used {at} $t_*=t_n+\tau/(3-3\alpha)$,
where {$\alpha=\alpha(\hat{\tau})$
is a differentiable function of $\hat{\tau}$
	and  satisfies
	$\alpha(0)=1/3$, $\alpha\neq 1$, and $\hat{\tau}=C \tau^p$ with   $p\geq 1$ and $C$ independent on $\tau$. } 

The two-stage fourth-order time discretizations in Section \ref{subsec:timedis}
can be applied to the 1D RHD equations by the following steps.

\begin{itemize}
	\item[Step 1.]  { For the reconstructed data $\vec U^{\pm,n}_{j+\frac12}$
	and $(\partial_x \vec U)^{\pm,n}_{j+\frac12}$, 
	following the GRP method \cite{Yang-He-Tang:2011},
	solve the Riemann problem of
	\begin{equation} \label{eq:GRP1D}
	\vec U_t + \vec F_1(\vec U)_x = 0,\ t>t_n,\\
	\end{equation}
	to get $\vec U_{j+\frac12}^{RP,n}$  and then
resolve  analytically the  GRP of \eqref{eq:GRP1D}
to obtain the value of $(\partial \vec U/\partial t)_{j+\frac12}^n$.}
	
	\item[Step 2.]  Compute the intermediate values $\{\overline{\vec U}_j^*\}$ {by}
	\begin{equation}
	\overline{\vec U}_j^* = \overline{\vec U}^n_{j} + \frac{\tau}{3(1-\alpha)} L_j (\overline{\vec U}^n) +  \frac{\tau^2}{12(1-\alpha)}  \pa_t L_j(\overline{\vec U}^n),
	\end{equation}
	where the terms
	$L_j (\overline{\vec U}^n)$ and  $\pa_t L_j(\overline{\vec U}^n)$ are given by
	\begin{equation}
	L_j(\overline{\vec U}^n) = -\frac{1}{\Delta x} \left( \vec F_1(\vec U_{j+\frac{1}{2}}^{RP,n}) - \vec F_1(\vec U_{j-\frac{1}{2}}^{RP,n})\right),
	\end{equation}
	and
	\begin{equation}\label{eq:1dLt}
	\pa_t L_j(\overline{\vec U}^n) = -\frac{1}{\Delta x} \left( (\pa_t \vec F_1)_{j+\frac{1}{2}}^n  - (\pa_t \vec F_1)_{j-\frac{1}{2}}^n\right),
	\end{equation}
	with
	$$(\pa_t \vec F_1)_{j\pm \frac{1}{2}}^n=
	\frac{\partial \vec F_1}{\partial \vec U} \left( \vec U_{j\pm \frac{1}{2}}^{RP,n} \right)
	\cdot   \left(\frac{\partial \vec U}{\partial t}\right)_{j\pm \frac12}^n.
	$$
	
	\item[Step 3.]  Reconstruct the values {$\vec U^{\pm,*}_{j+\frac12}$
		and $(\partial_x \vec U)^{\pm,*}_{j+\frac12}$} with $\{\overline{\vec U}_j^*\}$ by the above initial reconstruction procedure, and then
	resolve  analytically the local  GRP {of \eqref{eq:GRP1D}}
	to get $\vec U_{j+\frac12}^{RP,*}$ and $(\partial \vec U/\partial t)_{j+\frac12}^*$.
	
	\item[Step 4.]  Evolve the solution at $t_{n+1} = t_n + \tau$ by
	\begin{equation}
	\overline{\vec U}_j^{n+1} = \overline{\vec U}_j^n + \tau L_j( \overline{\vec U}^n)
	+ \frac{\alpha\tau^2}{2} \pa_t L_j(\overline{\vec U}^n)
	+ \frac{(1-\alpha)\tau^2}{2} \pa_t L_j(\overline{\vec U}^*),
	\end{equation}
	where
	\begin{equation}
	\pa_t L_j(\overline{\vec U}^*) = -\frac{1}{\Delta x} \left( (\pa_t \vec F_1)_{j+\frac{1}{2}}^*  - (\pa_t \vec F_1)_{j-\frac{1}{2}}^*\right),
	\end{equation}
	with
	$$(\pa_t \vec F_1)_{j\pm \frac{1}{2}}^*=
	\frac{\partial \vec F_1}{\partial \vec U} \left( \vec U_{j\pm \frac{1}{2}}^{RP,*}\right )
	\cdot   \left(\frac{\partial \vec U}{\partial t}\right)_{j\pm \frac12}^*.
	$$
\end{itemize}

%

\subsection{Application of two-stage time discretizations to 2D RHD equations}
{In this section, we apply} the  two-stage fourth-order time discretizations to the 2D RHD equations, i.e. \eqref{eq:rhd-eq001} with $d=2$
with the aid of  the analytical resolution
of the local ``quasi 1D'' GRP and  an adaptive primitive-conservative scheme.
{The latter given in \cite{E.F.Toro:2013}} is used to reduce the spurious solution  generated by the conservative scheme across the contact discontinuity, see Example \ref{example2.1}.
Similarly, the symbols $(x_1, x_2)$ and $(v_1,v_2)$ {are} replaced with $(x,y)$ and $(u,v)$, respectively, and  a uniform partition of the spatial domain
is given  by  $I_{jk} {=} (x_{j-\frac{1}{2}} ,\, x_{j+\frac{1}{2}})\times (y_{k-\frac{1}{2}} ,\, y_{k+\frac{1}{2}})$ with $\Delta x=x_{j+\frac{1}{2}}-x_{j-\frac{1}{2}}$ and
$\Delta y=y_{k+\frac{1}{2}}-y_{k-\frac{1}{2}}$.

\begin{example} \label{example2.1}\rm
	Because of the nonlinearity of \eqref{eq:rhd-eq001},
	when a conservative scheme is used, a spurious solution across the contact
	discontinuity, a well-known phenomenon in multi-fluid systems, {can} arise even for {a single
	material}. It is similar to the phenomenon mentioned in \cite{E.F.Toro:2013}.
	To clarify that, let us solve the Riemann problem of \eqref{eq:rhd-eq001}   
	with the initial data
	\begin{equation} \label{eq:example}
	(\rho ,  u , v, p)(x,y,0)
	\; = \; \begin{cases}
	(0.5, -0.5, 0.5, 5 ) , & x>0.5, \\
	(0.5,  -0.5, -0.5, 5) , & x<0.5.
	\end{cases}
	\end{equation}
	The computational domain is taken as $[0,1]\times [0,1]$.
	Fig. \ref{fig:001} gives the solutions  obtained
	by using the 2D (first-order, conservative) Godunov method.
	Fig. \ref{fig:example001b} gives the solutions  obtained
	by using the 2D two-stage fourth-order conservative method.
	The  obvious oscillations near  the contact
	discontinuity are observed, in other words, the spurious solutions
	have been generated near the contact discontinuity.
	It is  easy to verify it theoretically. To overcome such difficulty,
	the generalized Osher-type scheme in an adaptive primitive-conservative framework \cite{E.F.Toro:2013}
	can be employed to avoid {or reduce} the above spurious solutions at the expense of the conservation.
	{Figs. \ref{fig:example001c} and \ref{fig:example001d} do respectively display more better solutions  obtained
		by the adaptive primitive-conservative scheme with  the reconstructions of the characteristic
		and primitive variables than those in Figs. \ref{fig:001} and \ref{fig:example001b},} in which
	the generalized Osher-type scheme
	is adaptively used to solve  the RHD equations \eqref{eq:rhd-eq001}
	in the equivalently primitive form
	\begin{equation}\label{eq:RHD2d-prim}
	\pa_t \vec V + \widetilde{\vec A}(\vec V) \pa_x \vec V +\widetilde{\vec B}(\vec V) \pa_y \vec V = 0,
	\end{equation}
	where $\vec V = (\rho,\, u,\, v,\, p)^T$ and
	\begin{equation} \label{eq:matrixA}
	\widetilde{\vec A}(\vec V) =
	u \cdot \vec I_4 +
	\begin{pmatrix}
	0 & \dfrac{\rho}{1-(u^2+v^2)c_s^2} & 0 & \dfrac{-u}{W^2h[1-(u^2+v^2)c_s^2]} \\
	0 & \dfrac{-uc_s^2}{W^2[1-(u^2+v^2)c_s^2]} & 0 & \dfrac{H}{\rho h W^2[1-(u^2+v^2)c_s^2]} \\
	0 & \dfrac{-vc_s^2}{W^2[1-(u^2+v^2)c_s^2]} & 0 &  \dfrac{-uv(1-c_s^2)}{\rho h W^2[1-(u^2+v^2)c_s^2]} \\
	0 & \dfrac{\rho h c_s^2}{1-(u^2+v^2)c_s^2} & 0 & \dfrac{-uc_s^2}{W^2[1-(u^2+v^2)c_s^2]}
	\end{pmatrix},
	\end{equation}
	with $H = 1-u^2 - v^2c_s^2$ and $c_s^2 = \frac{\G p} { \rho h}$.
	The matrix $\widetilde{\vec B}(\vec V)$ can be gotten by
	exchanging $u$ and $v$,  and then the second and third row,
	and the second and third column of the matrix $\widetilde{\vec A}$.
	%
	%

	\begin{figure}[htbp]
\centering
		\setlength{\abovecaptionskip}{0.cm}
		\setlength{\belowcaptionskip}{-0.cm}
		\subfigure[$\rho$]{
			\begin{minipage}[t]{0.22\textwidth}
				\centering
				\includegraphics[width=\textwidth]{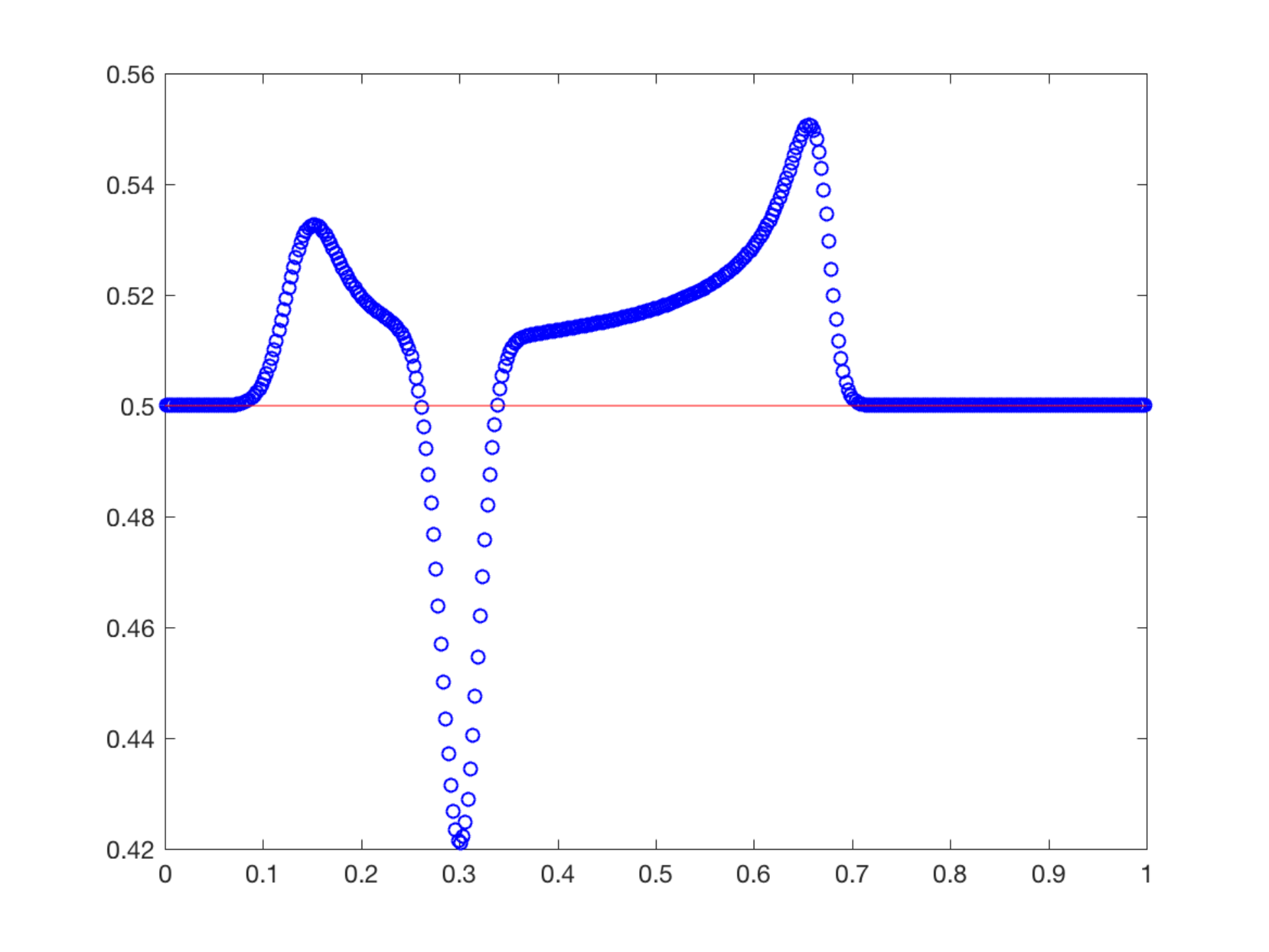}
			\end{minipage}
		}
		\subfigure[$u$]{
			\begin{minipage}[t]{0.22\textwidth}
				\centering
				\includegraphics[width=\textwidth]{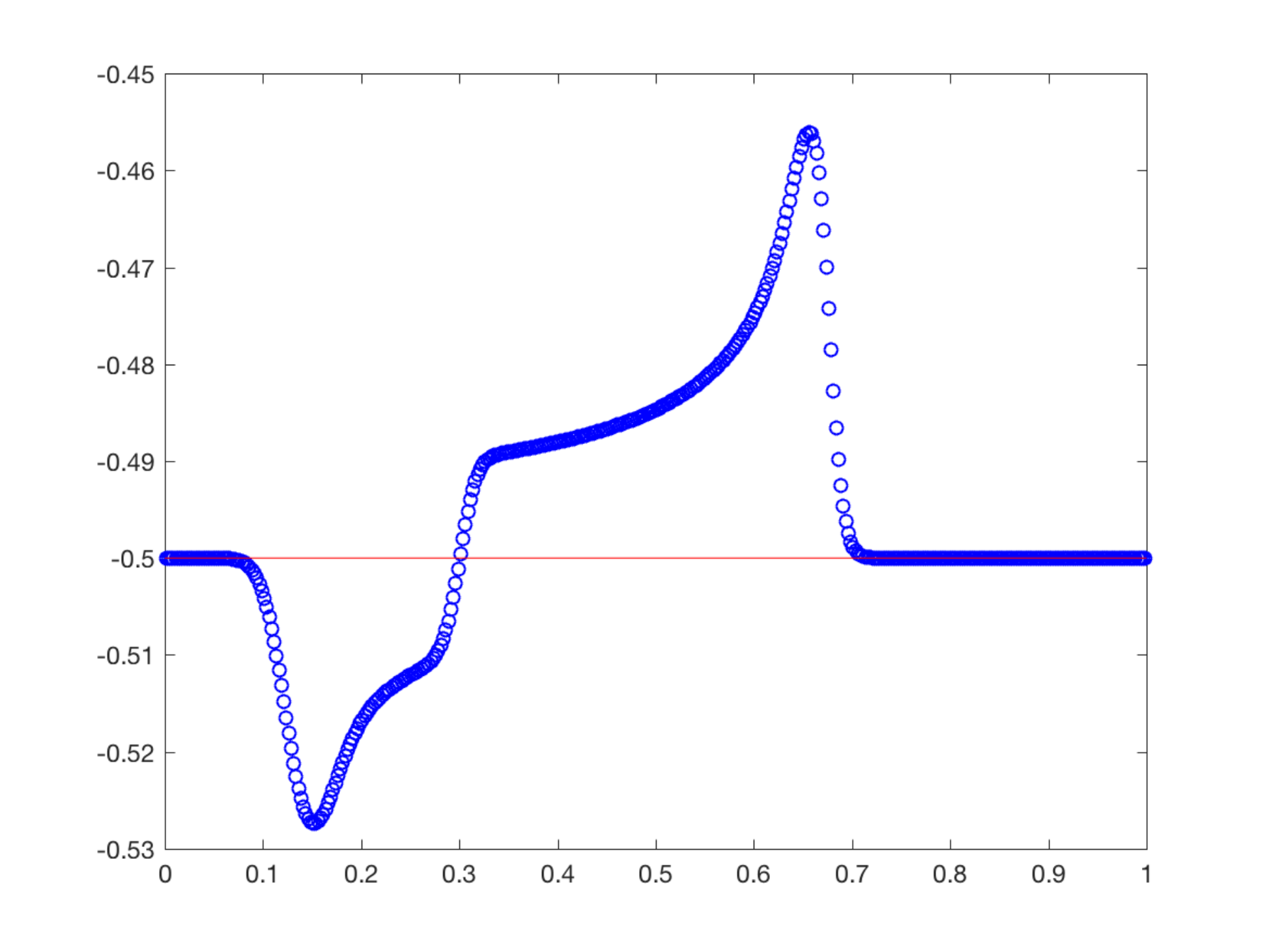}	
			\end{minipage}
		}
		\subfigure[$v$]{
			\begin{minipage}[t]{0.2\textwidth}
				\centering
				\includegraphics[width=\textwidth]{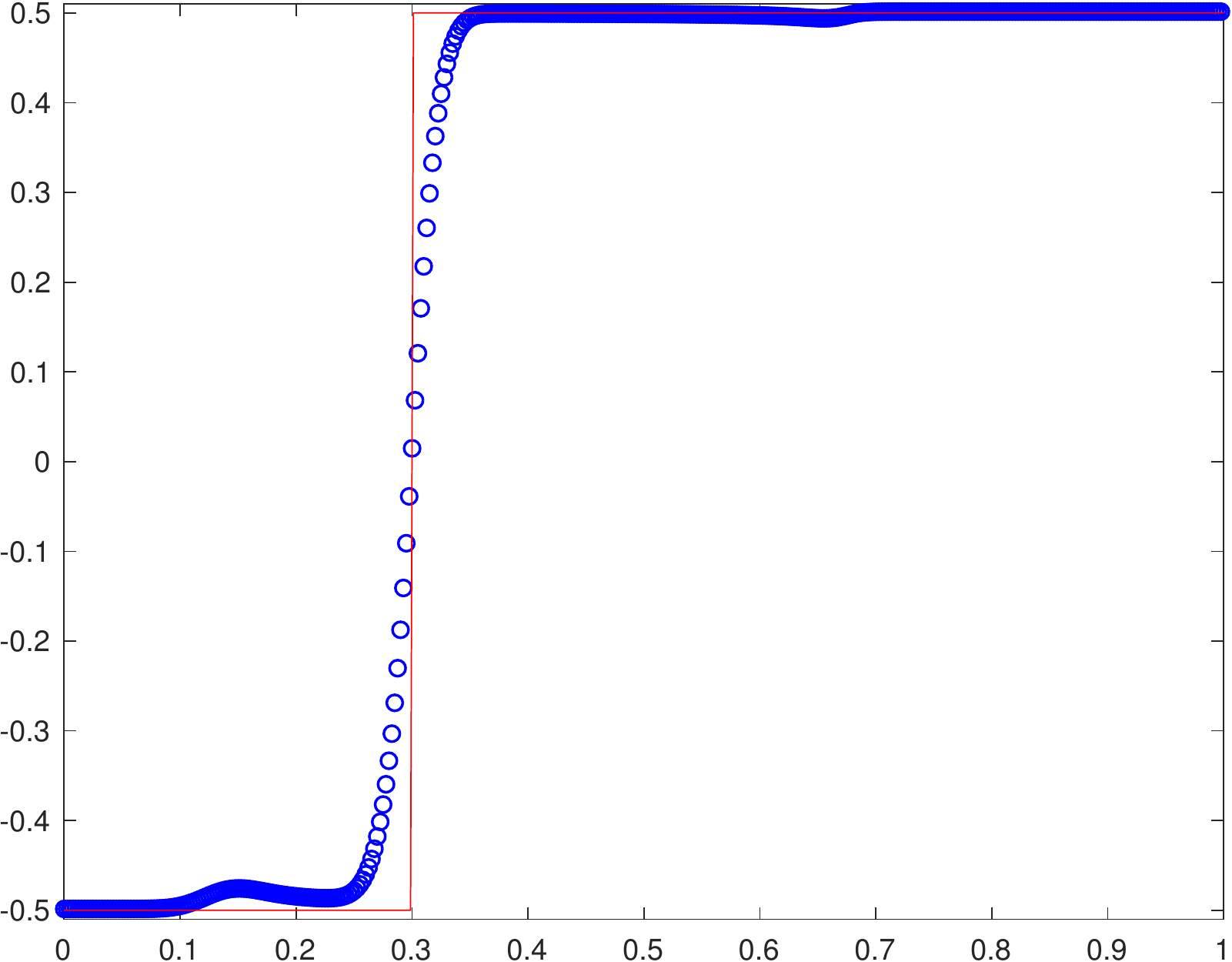}
			\end{minipage}
		}
		\subfigure[$p$]{
			\begin{minipage}[t]{0.22\textwidth}
				\centering
				\includegraphics[width=\textwidth]{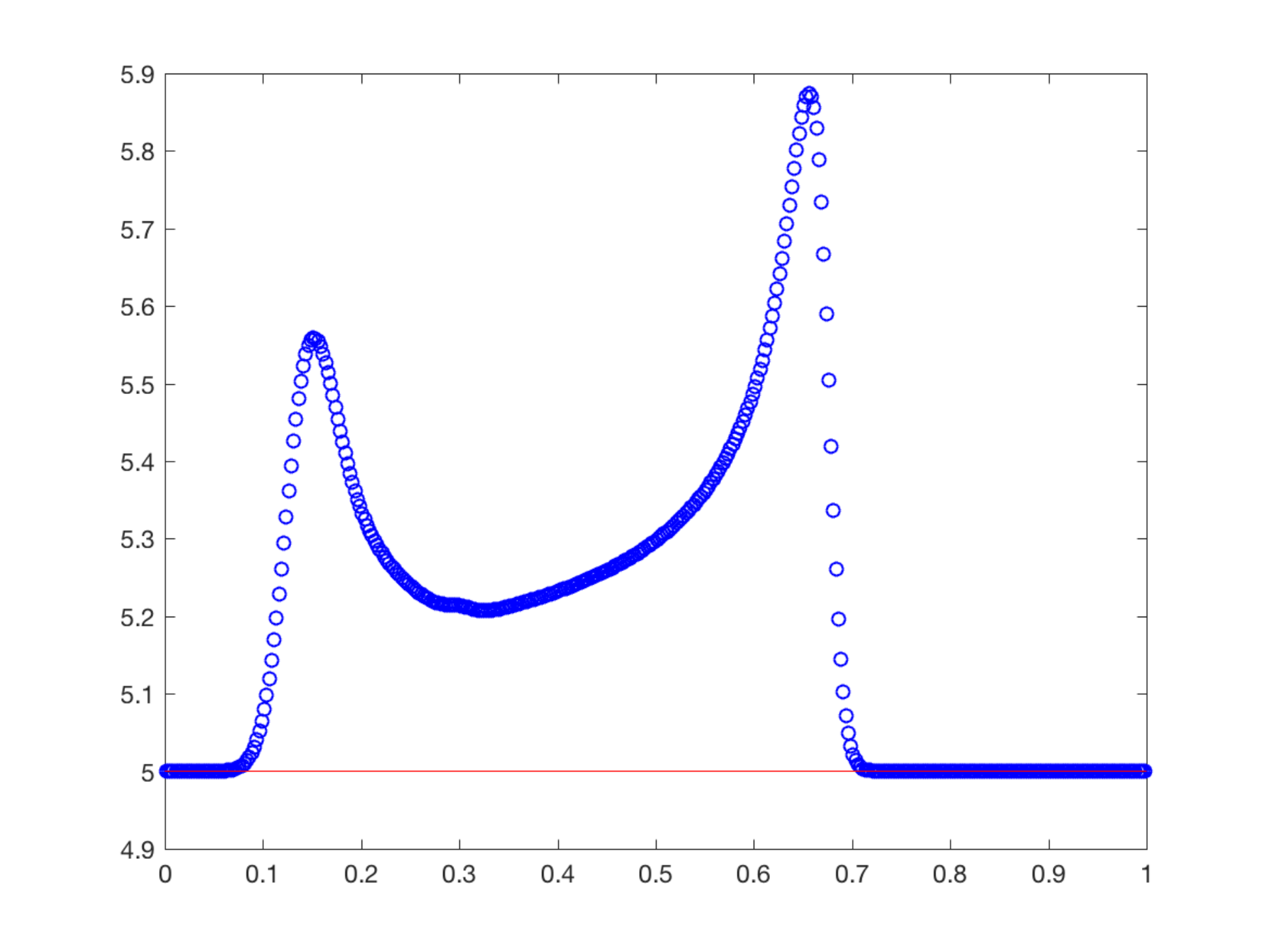}	
			\end{minipage}
		}
		\caption{\small Example \ref{example2.1}: The solutions at $t = 0.4$ obtained by the first-order Godunov method   along the line $y = 0.5$ with $400 \times 40$ uniform cells.}\label{fig:001}
	\end{figure}
	
	\begin{figure}[htbp]
\centering
		\setlength{\abovecaptionskip}{0.cm}
		\setlength{\belowcaptionskip}{-0.cm}
		\subfigure[$\rho$]{
			\begin{minipage}[t]{0.22\textwidth}
				\centering
				\includegraphics[width=\textwidth]{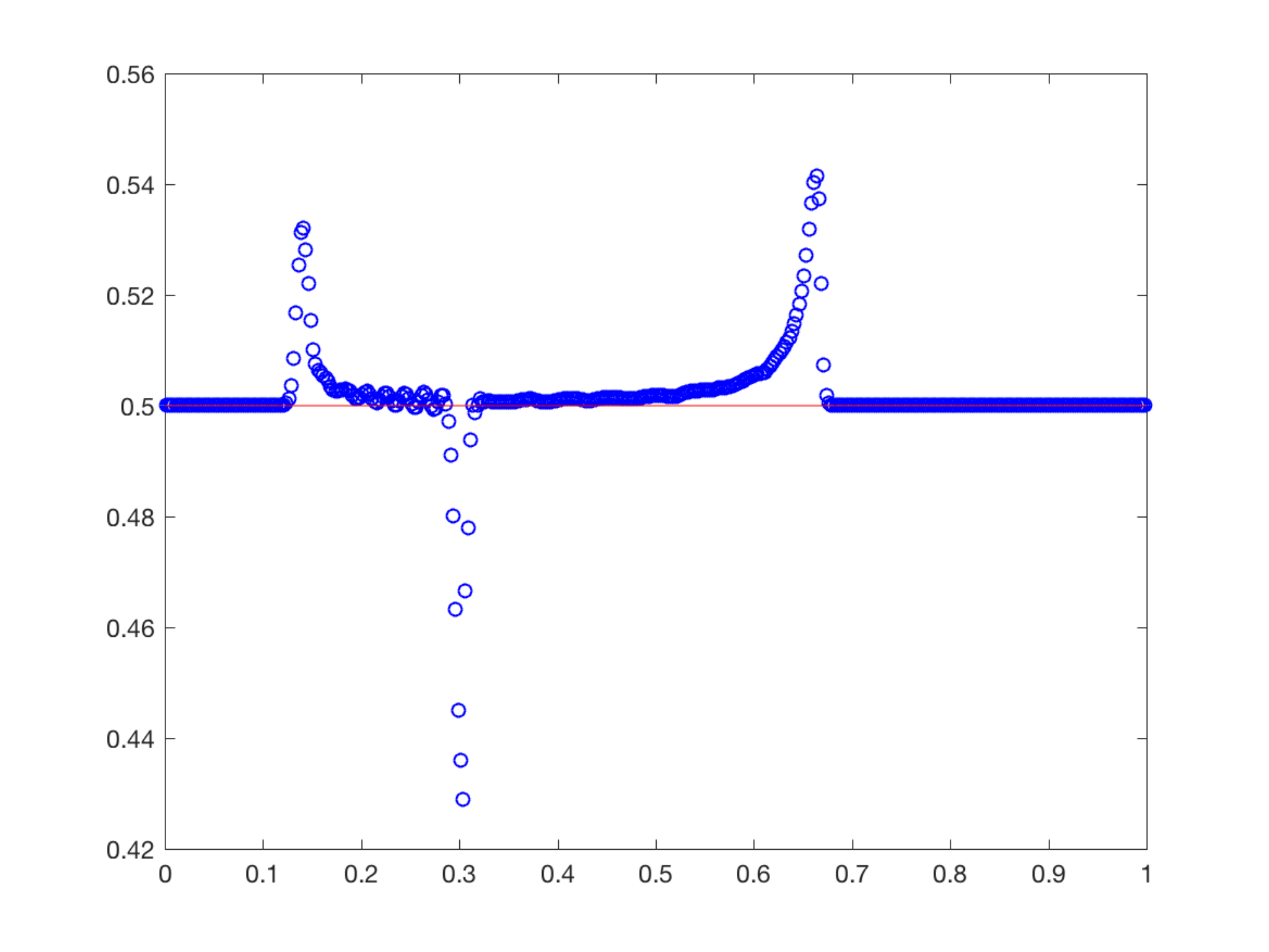}
			\end{minipage}
		}
		\subfigure[$u$]{
			\begin{minipage}[t]{0.22\textwidth}
				\centering
				\includegraphics[width=\textwidth]{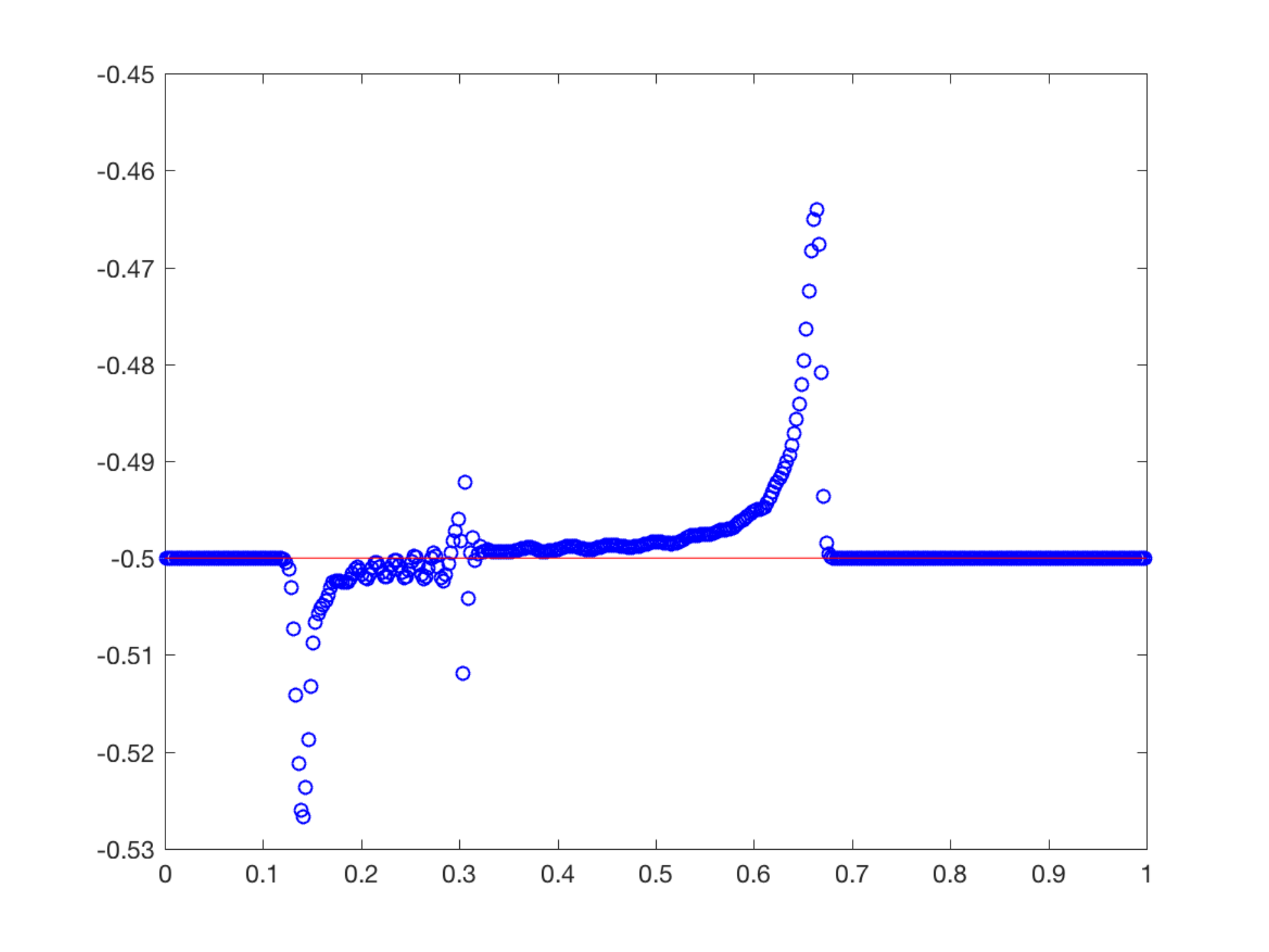}
			\end{minipage}
		}
		\subfigure[$v$]{
			\begin{minipage}[t]{0.2\textwidth}
				\centering
				\includegraphics[width=\textwidth]{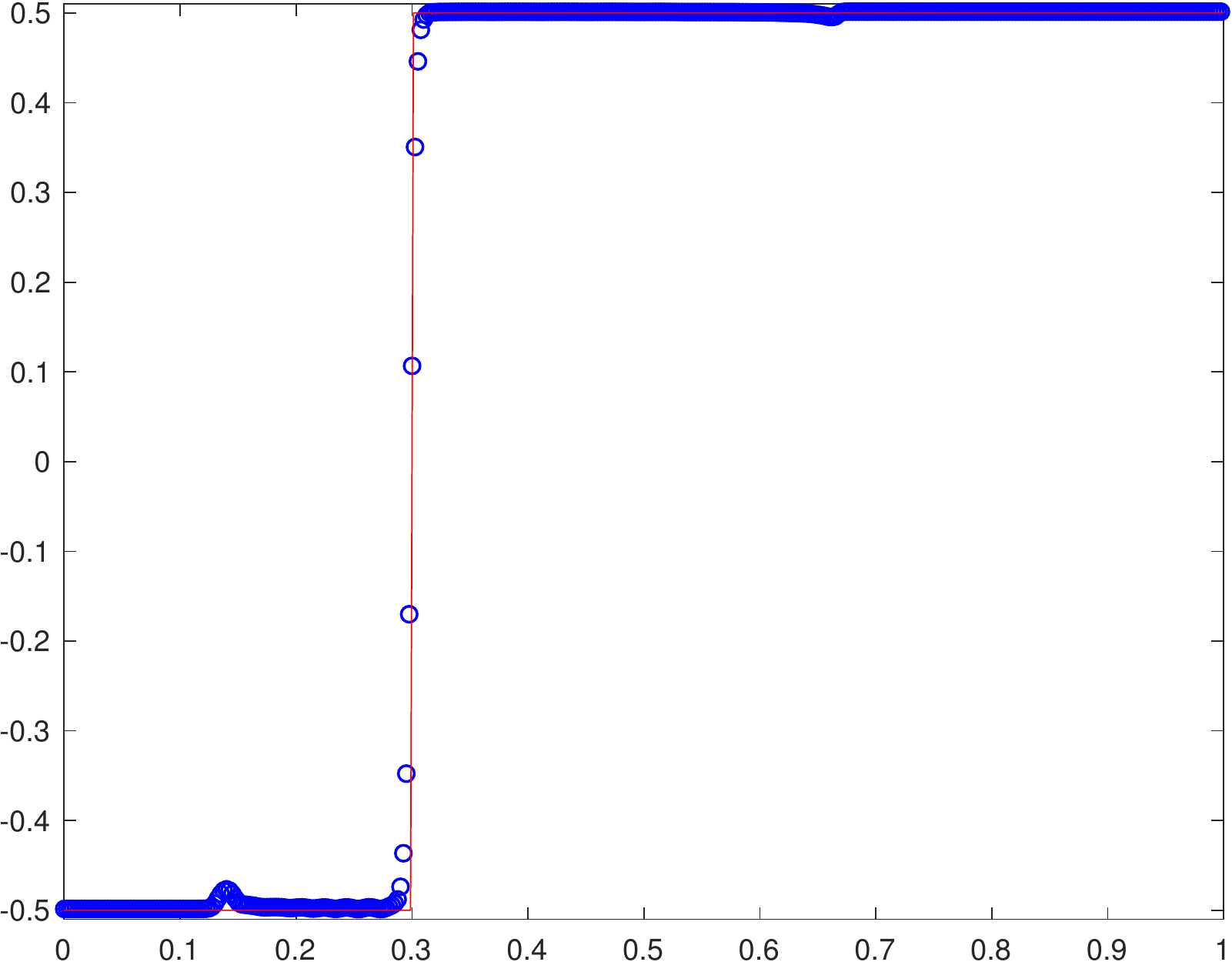}
			\end{minipage}
		}
		\subfigure[$p$]{
			\begin{minipage}[t]{0.22\textwidth}
				\centering
				\includegraphics[width=\textwidth]{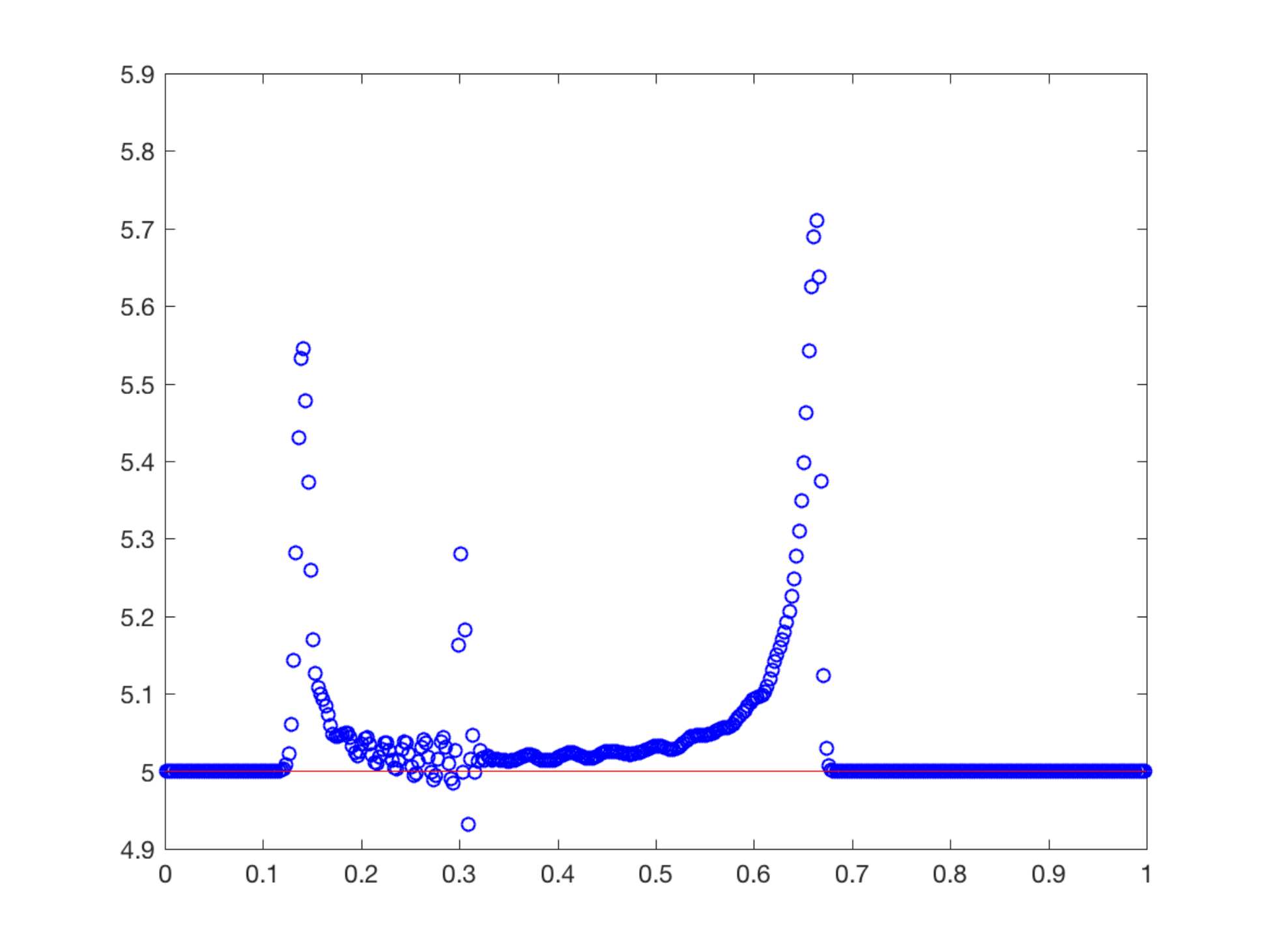}
			\end{minipage}
		}
		\caption{\small Same as Fig. \ref{fig:001} except for two-stage fourth-order conservative scheme {with the   reconstructed characteristic variables}.}\label{fig:example001b}
	\end{figure}
	
	%
	\begin{figure}[htbp]
\centering
		\setlength{\abovecaptionskip}{0.cm}
		\setlength{\belowcaptionskip}{-0.cm}
		\subfigure[$\rho$]{
			\begin{minipage}[t]{0.22\textwidth}
				\centering
				\includegraphics[width=\textwidth]{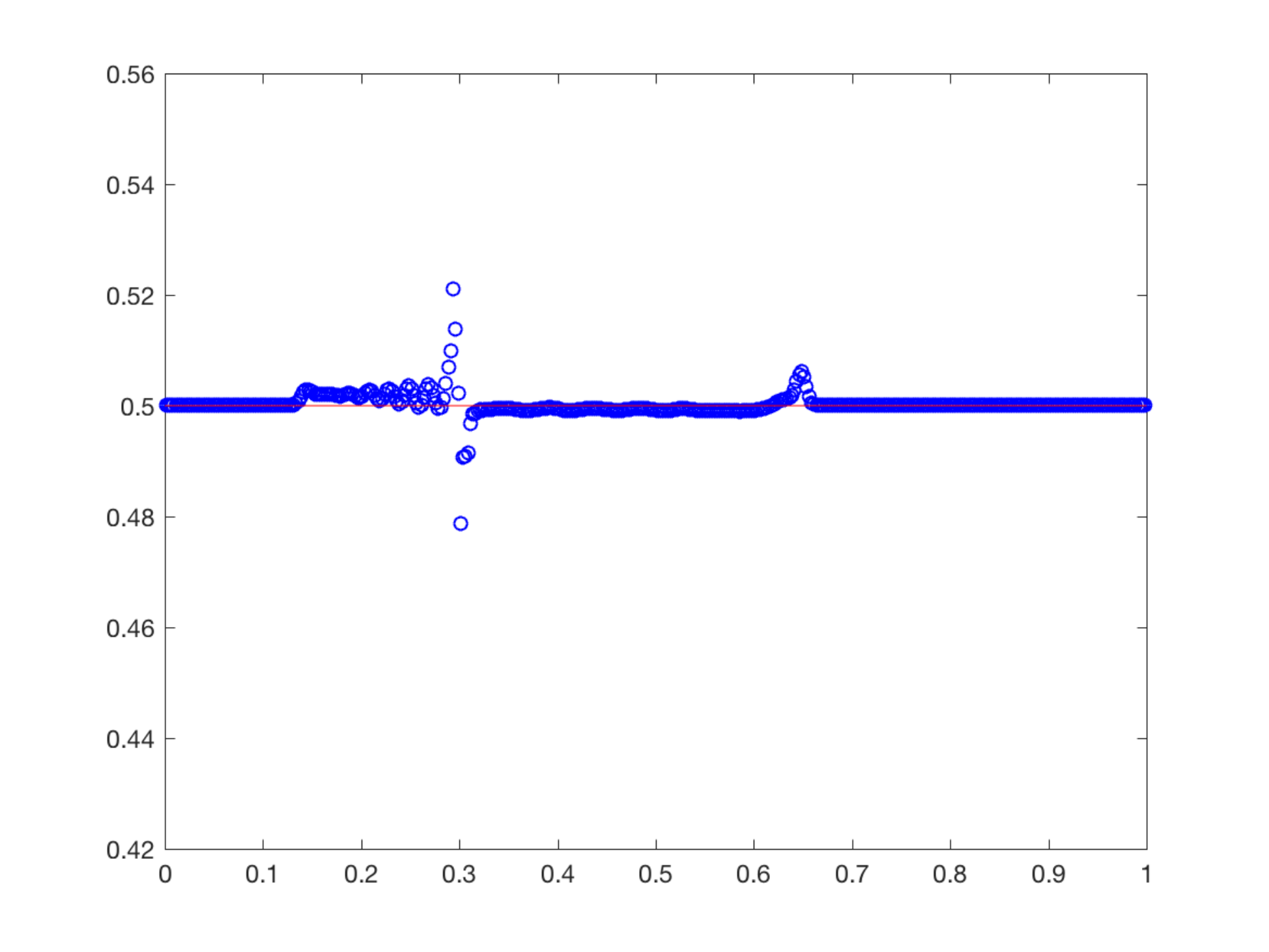}
			\end{minipage}
		}
		\subfigure[$u$]{
			\begin{minipage}[t]{0.22\textwidth}
				\centering
				\includegraphics[width=\textwidth]{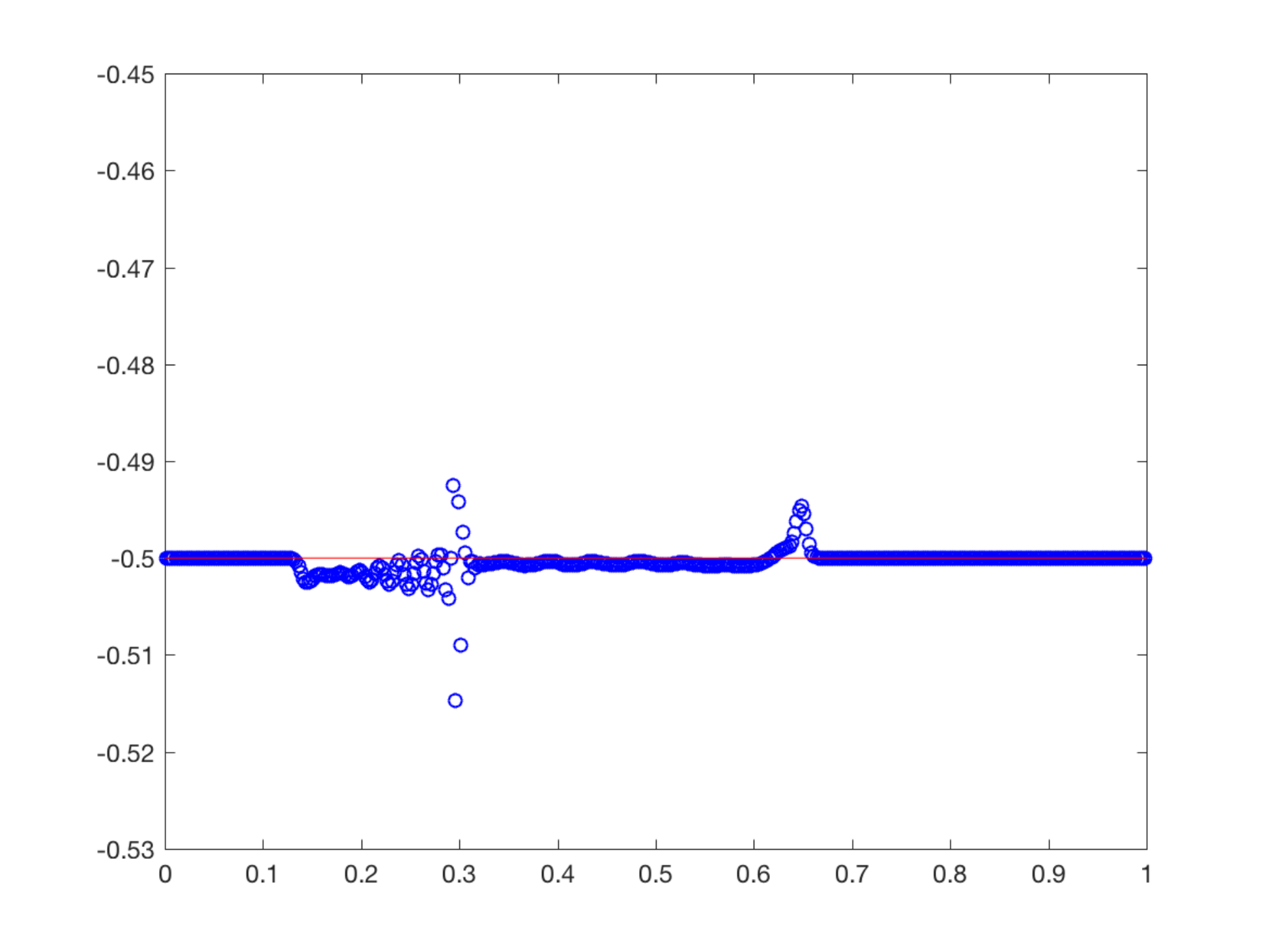}
			\end{minipage}
		}
		\subfigure[$v$]{
			\begin{minipage}[t]{0.2\textwidth}
				\centering
				\includegraphics[width=\textwidth]{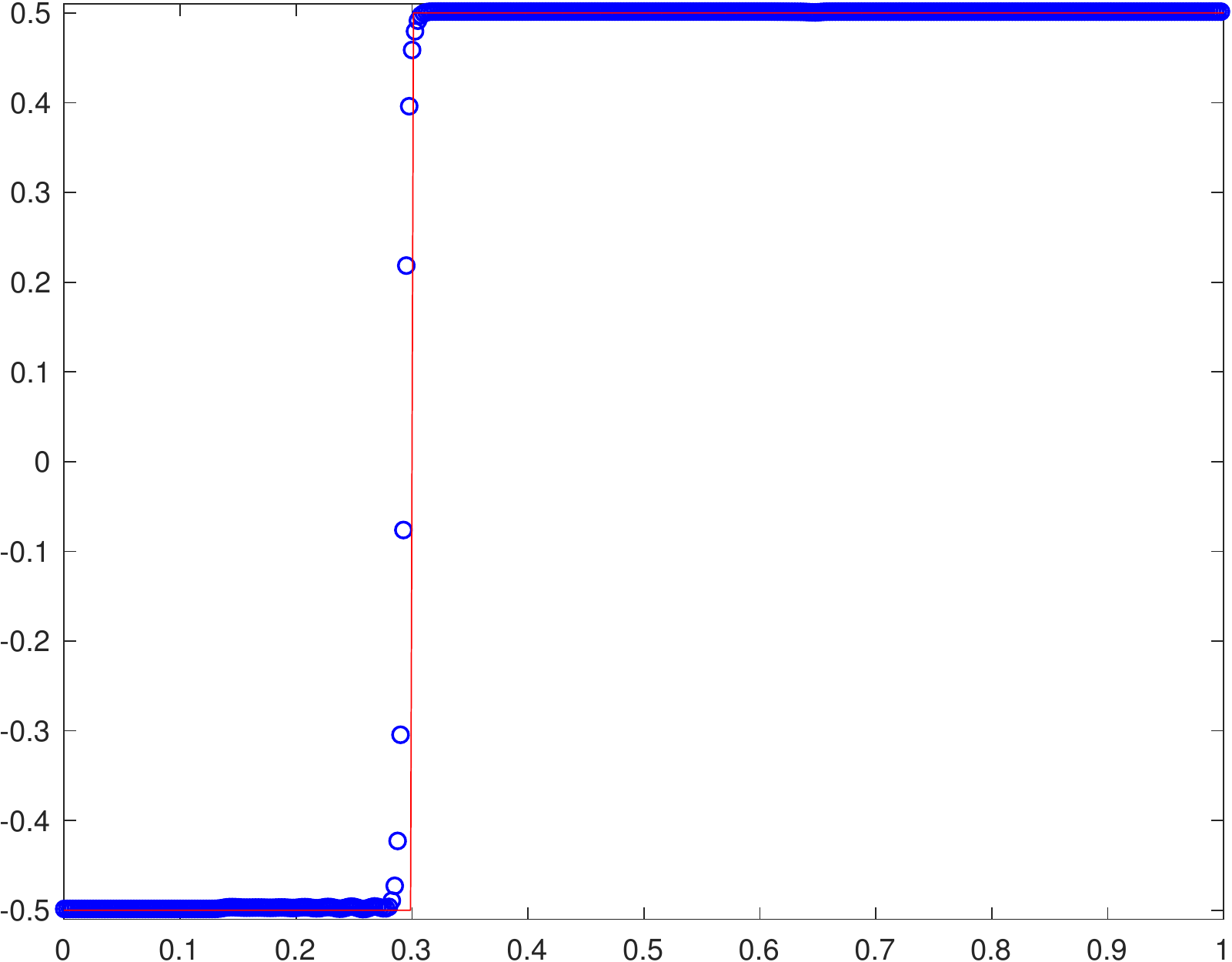}
			\end{minipage}
		}
		\subfigure[$p$]{
			\begin{minipage}[t]{0.22\textwidth}
				\centering
				\includegraphics[width=\textwidth]{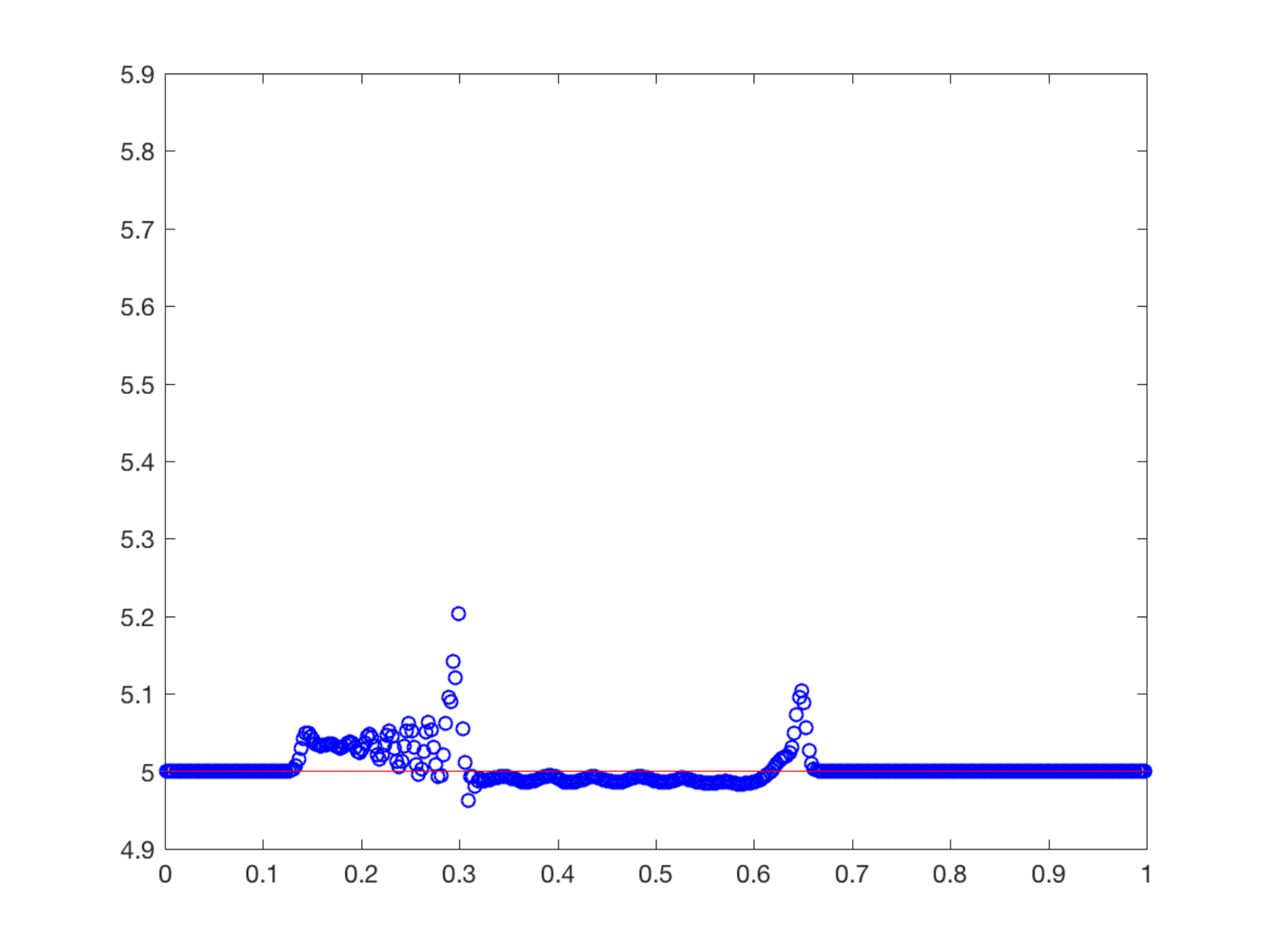}
			\end{minipage}
		}
		\caption{\small Same as Fig. \ref{fig:001} except for {two-stage fourth-order} adaptive primitive-conservative scheme {with the   reconstructed characteristic variables}.}\label{fig:example001c}
	\end{figure}
	
\begin{figure}[htbp]
	\centering
	\setlength{\abovecaptionskip}{0.cm}
	\setlength{\belowcaptionskip}{-0.cm}
	\subfigure[$\rho$]{
		\begin{minipage}[t]{0.22\textwidth}
			\centering
			\includegraphics[width=\textwidth]{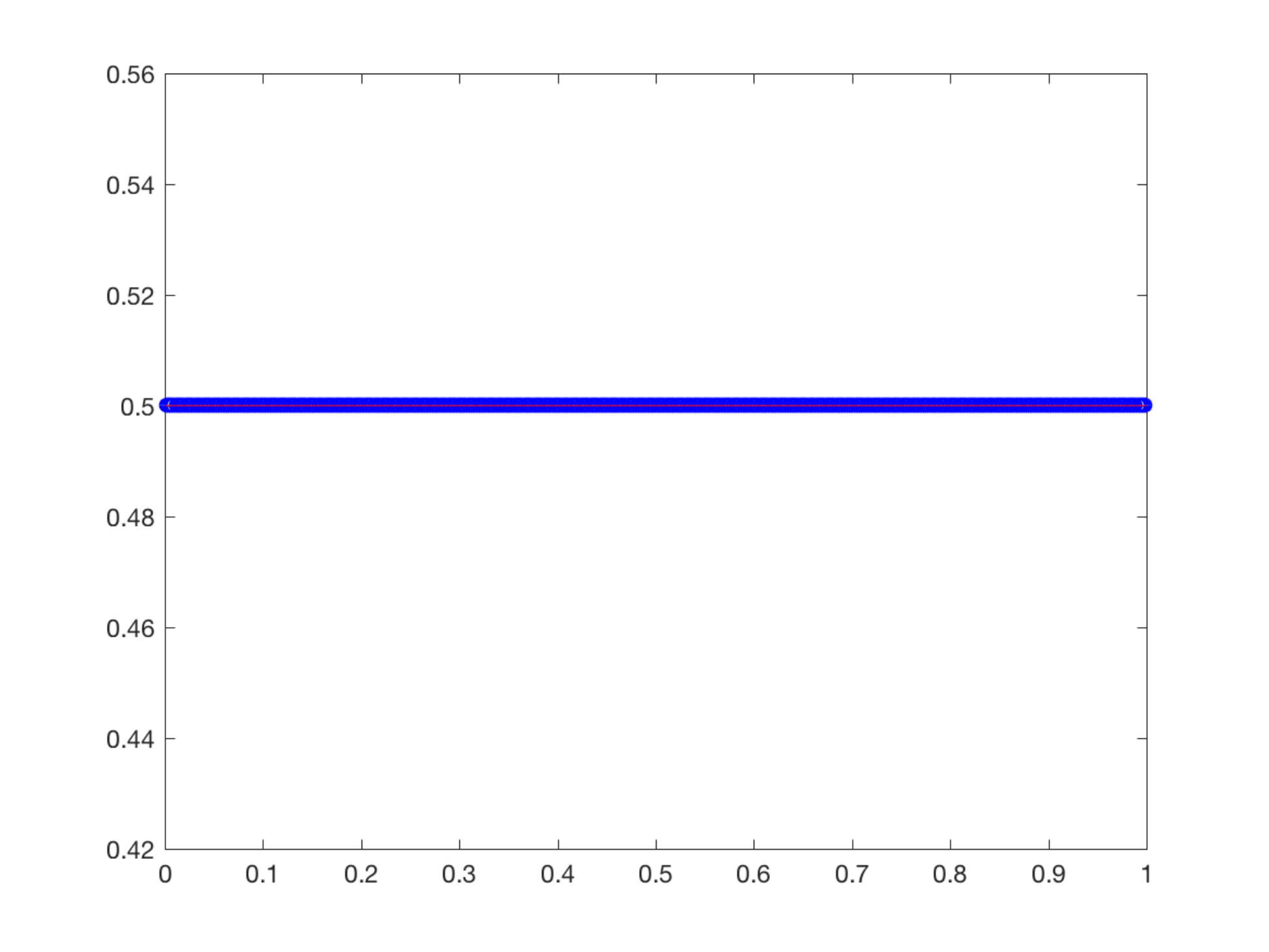}
		\end{minipage}
	}
	\subfigure[$u$]{
		\begin{minipage}[t]{0.22\textwidth}
			\centering
			\includegraphics[width=\textwidth]{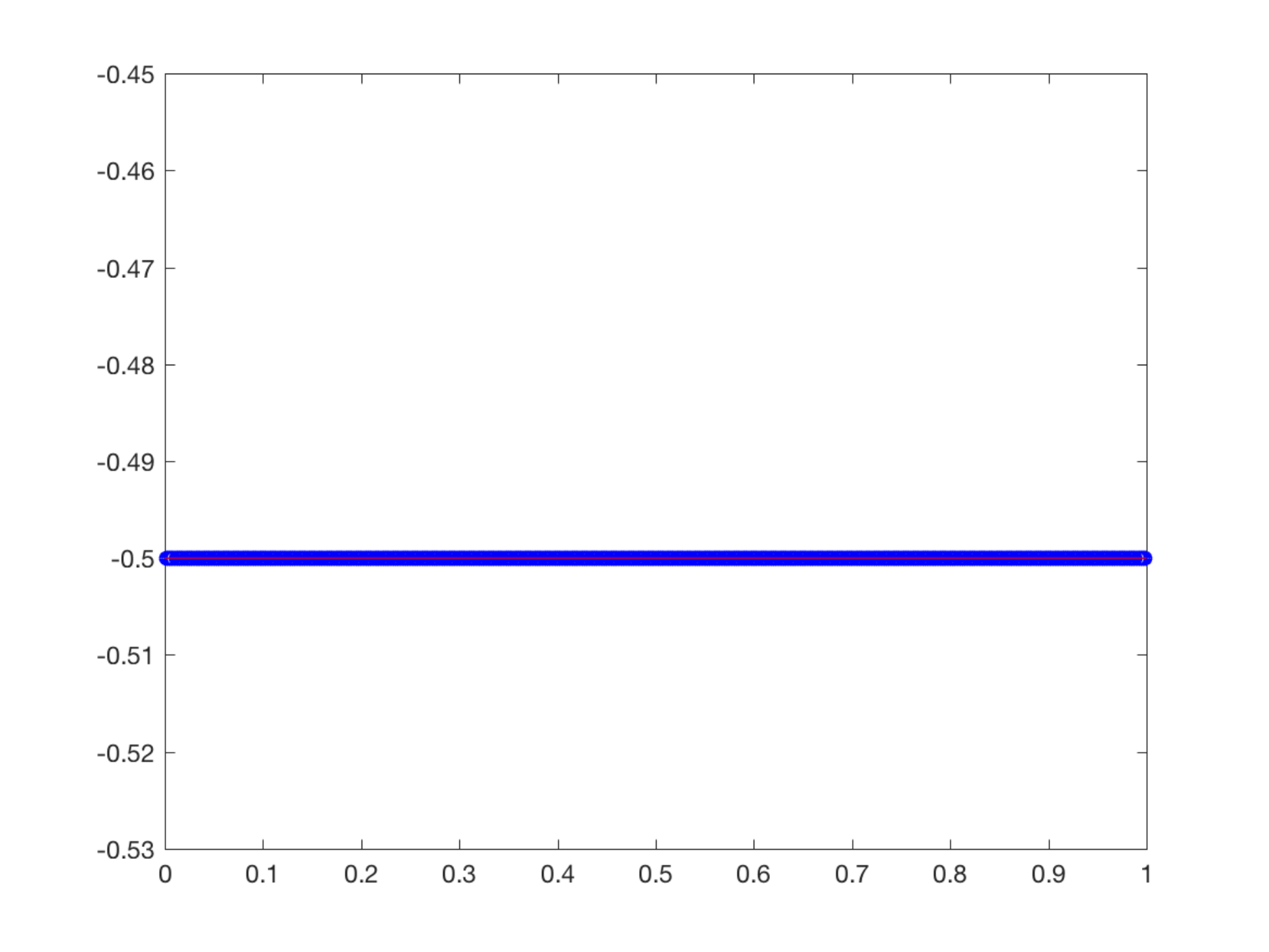}	
		\end{minipage}
	}
	\subfigure[$v$]{
		\begin{minipage}[t]{0.2\textwidth}
			\centering
			\includegraphics[width=\textwidth]{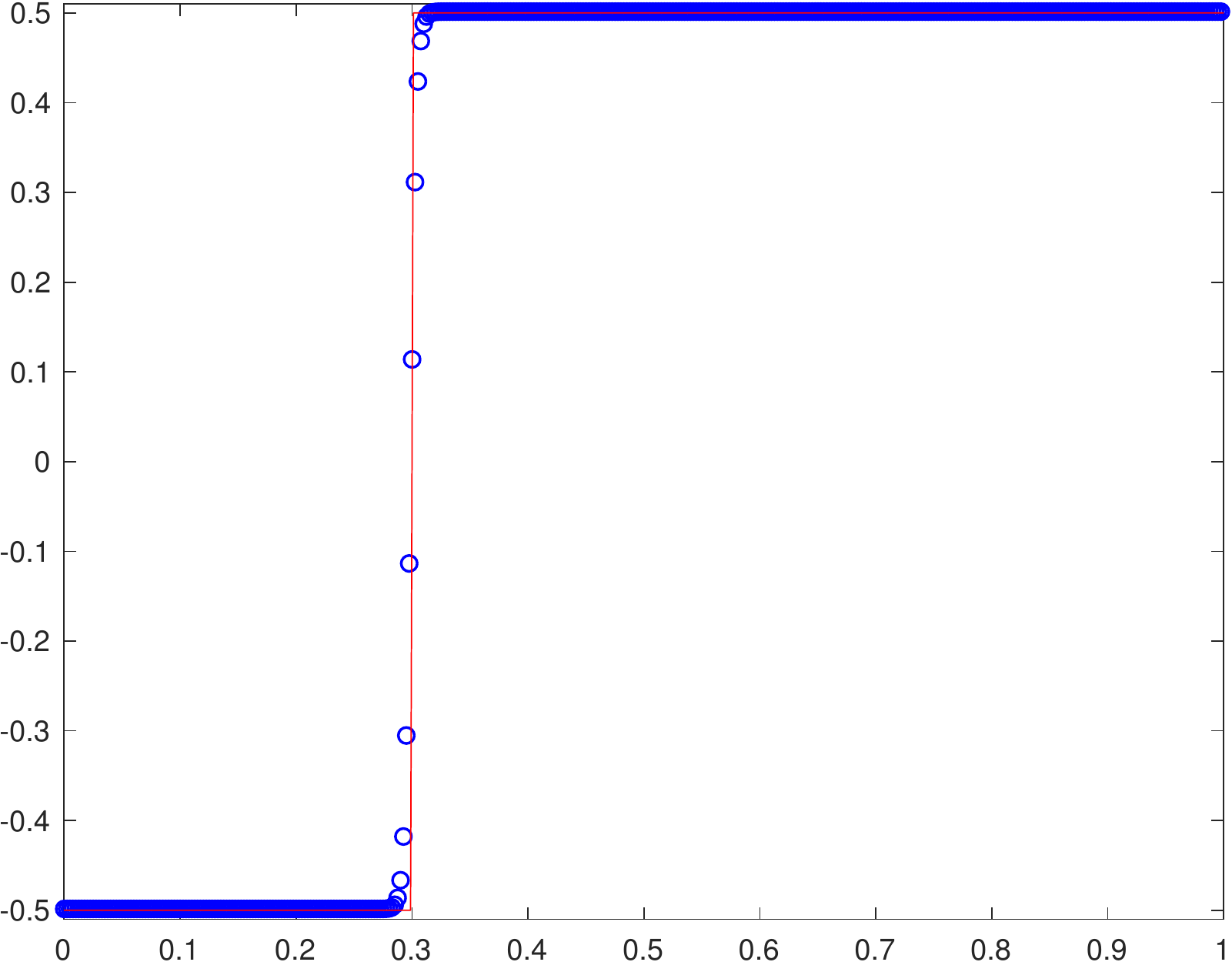}
		\end{minipage}
	}
	\subfigure[$p$]{
		\begin{minipage}[t]{0.22\textwidth}
			\centering
			\includegraphics[width=\textwidth]{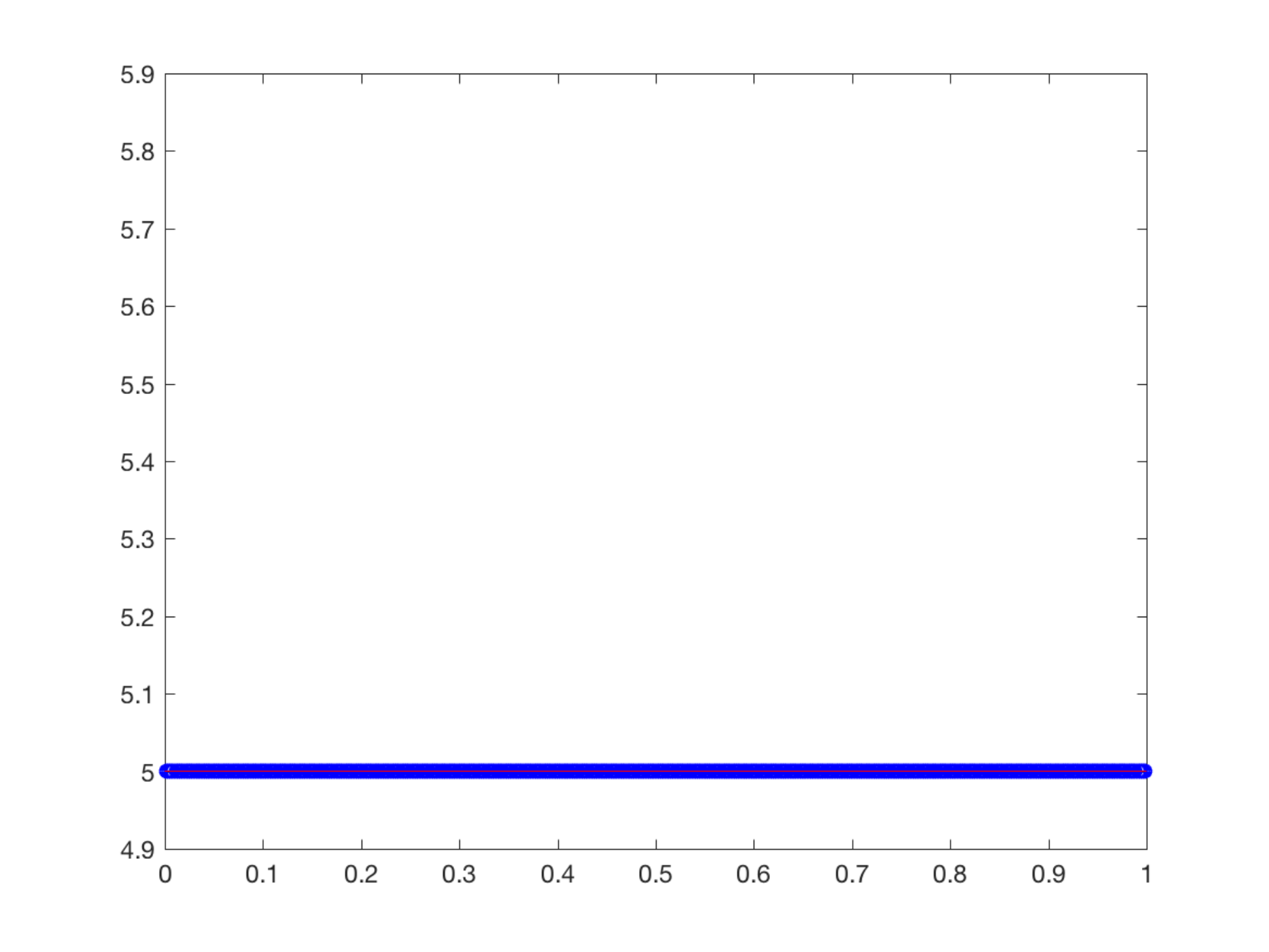}	
		\end{minipage}
	}
	\caption{\small  { Same as Fig. \ref{fig:example001c} except for   the reconstructed primitive variables}.}\label{fig:example001d}
	
\end{figure}

	%
	
\end{example}


With the given ``initial'' cell-average data $\{{\overline{\vec U}}^n_{jk}\}$,
{in the $x-$direction, we want to reconstruct $\vec U^{\pm,n}_{j+\frac12,k_l}$, $(\partial_x \vec U)^{\pm,n}_{j+\frac12,k_l}$ and $(\partial_x \vec F_2)^{\pm,n}_{j+\frac12,k_l}$, where
$	\vec U^{\pm,n}_{j+\frac12,k_l} \approx \vec U(x_{j+\frac12}\pm 0, y_{k_l}^G, t_n)$,
	$(\partial_x \vec U)^{\pm,n}_{j+\frac12,k_l} \approx (\partial_x \vec U)(x_{j+\frac12}\pm 0, y_{k_l}^G, t_n)$,
	$(\partial_y \vec F_2)^{\pm,n}_{j+\frac12,k_l} \approx (\partial_y \vec F_2)(x_{j+\frac12}\pm 0, y_{k_l}^G, t_n)$,
	here $y_{k_l}^G \in (y_{k-\frac{1}{2}}, y_{k+\frac{1}{2}})$ denotes  the associated Gauss-Legendre point,
$l=1,2,\cdots,K$.
The procedure is given as follows:
	\begin{description}
		\item[(1)] Calculate $\vec U^{\pm,n}_{j+\frac12,k_l}$ and $(\partial_x \vec U)^{\pm,n}_{j+\frac12,k_l}$
by the following two steps:
		\begin{description}
			\item[$-$] For each $j$, use the standard 5th-order WENO technique \cite{Jiang-Shu1996} to reconstruct the approximate value of $\vec U$ at the point  $y_{k_l}^G$, denoted by $\overline{\vec U}_{j,k_l}^n$, which is an approximation of $\frac{1}{\Delta x}\int_{x_{j-\frac12}}^{x_{j+\frac12}} \vec U(x,y_{k_l}^G,t_n) \; dx$, $l=1,2,\cdots,K$.
			
			\item[$-$]  Reconstruct $\vec U^{\pm,n}_{j+\frac12,k_l}$ and $(\partial_x \vec U)^{\pm,n}_{j+\frac12,k_l}$ by the initial reconstruction procedure in Section \ref{subsec:method-1d} and  the data $\{\overline{\vec U}_{j,k_l}^n\}$.
%
		\end{description}

		\item[(2)] Calculate $(\partial_y \vec F_2)^{\pm,n}_{j+\frac12,k_l}$ as follows:
		\begin{description}
			\item[$-$] Calculate $\{  {\vec F}_2({\overline{\vec U}}^n_{jk}) \}$ and
then for each $j$, use those data and the  5th-order WENO technique to reconstruct  $(\overline{\vec F}_2)_{j,k+\frac12}^{\pm,n}$, approximating $\frac{1}{\Delta x} \int_{x_{j-\frac12}}^{x_{j+\frac12}} $ $\vec F_2(x,y_{k+\frac12}\pm 0,t_n) \; dx$.
			
			\item[$-$] Calculate $ {(\overline{\pa_y \vec F_2})}^n_{j,k} = \frac{1}{\Delta y}
 \big((\overline{\vec F}_2)_{j,k+\frac12}^{-,n} - (\overline{\vec F}_2)_{j,k-\frac12}^{+,n} \big)$, and then use those data and   the  5th-order WENO technique to reconstruct $(\overline{\pa_y \vec F_2})_{j,k_l}^n$ at the point $y_{k_l}^G$. Here $(\overline{\pa_y \vec F_2})_{j,k_l}^n \approx \frac{1}{\Delta x} \int_{x_{j-\frac12}}^{x_{j+\frac12}} \pa_y \vec F_2(x,y_{k_l}^G,t_n) \; dx$ and $l=1,2,\cdots,K$.
			
			\item[$-$] Use the data $\{(\overline{\pa_y \vec F_2})_{j,k_l}^n\}$ and the 5th-order WENO technique to get $(\partial_y \vec F_2)^{\pm,n}_{j+\frac12,k_l}$.
%
		\end{description}
	\end{description}
	
}
Such reconstruction is also used at $t_*=t_n+\tau/(3-3\alpha)$,
where {$\alpha=\alpha(\hat{\tau})$
is a differentiable function of $\hat{\tau}$
	and  satisfies
	$\alpha(0)=1/3$, $\alpha\neq 1$, and $\hat{\tau}=C \tau^p$ with $p\geq 1$ and $C$ independent on $\tau$. }

The two-stage fourth-order time discretizations in Section \ref{subsec:timedis} can be applied to the 2D RHD equations by the following steps.
\begin{itemize}
	\item[Step 1.]
	In the $x$-direction, solve the local Riemann problem
	\begin{equation}
	\begin{cases}
	\vec U_t + \vec F_1(\vec U)_x = 0,\\
	\vec U(x,y_{k_l}^G,t_n) = \begin{cases}
	{\vec U^{-,n}_{j+\frac12,k_l}}, & x < x_{j+\frac{1}{2}},\\
	{\vec U^{+,n}_{j+\frac12,k_l}}, & x > x_{j+\frac{1}{2}},
	\end{cases}
	\end{cases}
	\end{equation}
	to get $\vec U_{j+\frac{1}{2},k_l}^{RP,n}$  {and $\vec V_{j+\frac{1}{2},k_l}^{RP,n}$}, and resolve the local ``quasi 1D'' GRP of
	\begin{equation} \label{eq:quasi1D}
	\vec U_t + \vec F_1(\vec U)_x = -{\widehat{(\partial_y \vec F_2)}^n_{j+\frac12,k_l}},\ t>t_n,\\
	%
	\end{equation}
	to obtain
	$\left(\frac{\pa}{\pa t}\vec U\right)_{j+\frac{1}{2},k_l}^n$ {and $\left(\frac{\pa}{\pa t}\vec V\right)_{j+\frac{1}{2},k_l}^n$},
	where
	{
		\begin{equation*}
		\widehat{(\partial_y \vec F_2)}^n_{j+\frac12,k_l}
		= \vec R\vec I^+\vec R^-  (\partial_y \vec F_2)^{-,n}_{j+\frac12,k_l}
		+ \vec R\vec I^-\vec R^-  (\partial_y \vec F_2)^{+,n}_{j+\frac12,k_l},
		\end{equation*}
	}
	and
	\begin{equation*}
	\vec A =\frac{\pa \vec F_1}{\pa \vec U}(\vec U_{j+\frac{1}{2},k_l}^{RP,n}) = \vec R \vec\Lambda \vec R^-,  \vec  \Lambda = \mbox{diag}\{\la_i\}, \
	\vec I^\pm  = \frac{1}{2}\mbox{diag}\{1\pm \mbox{sign}(\la_i)\}.
	\end{equation*}
	
	The analytical resolution of {the} ``quasi-1D'' GRP is give in Appendix \ref{Appendix:001}.
	Similarly,  solve the Riemann problem  and
	resolve {the}  ``quasi 1D'' GRP in the $y$-direction to get $\vec U_{j_l,k+\frac{1}{2}}^{RP,n}$, $\left(\frac{\pa}{\pa t}\vec U\right)_{j_l,k+\frac{1}{2}}^n $,
 {	$\vec V_{j_l,k+\frac{1}{2}}^{RP,n}$ and $\left(\frac{\pa}{\pa t}\vec V\right)_{j_l,k+\frac{1}{2}}^n$}.

	\item[Step 2.]   Compute the intermediate solutions $\overline{\vec U}_{jk}^*$ or
	$\overline{\vec V}_{jk}^*$ at $t^*$ with the adaptive procedure \cite[Section 3.3]{E.F.Toro:2013},
	whereby the conservative scheme is only applied to the cells in
	which the {shock waves} are involved and the primitive scheme is used elsewhere to address the issue mentioned in Example \ref{example2.1}.
	With the help of $\vec U_{j\pm \frac{1}{2},k_l}^{RP,n}$ and  $\vec U_{j_l,k\pm\frac{1}{2}}^{RP,n}$, the pressures $p_{j\pm \frac{1}{2},k_l}^n$ and $p_{j_l,k\pm\frac{1}{2}}^n$,
  {the} fastest shock speeds $s_{j+\frac{1}{2},k_l}^{n,L}$, $s_{j-\frac{1}{2},k_l}^{n,R}$, $s_{j_l,k+\frac{1}{2}}^{n,L}$, $s_{j-\frac{1}{2},k_l}^{n,R}$
	are first obtained and then  {we} do the followings.
	\begin{itemize}
		\item
		If
		\begin{equation*}
		\begin{cases}
		\frac{p_{j+\frac{1}{2},k_l}^n}{p_{jk}^n} > P_{\mbox{sw}}, \\
		s_{j+\frac{1}{2},k_l}^{n,L} < 0,
		\end{cases}
		\mbox{or}\;
		\begin{cases}
		\frac{p_{j-\frac{1}{2},k_l}^n}{p_{jk}^n} > P_{\mbox{sw}}, \\
		s_{j-\frac{1}{2},k_l}^{n,R} > 0,
		\end{cases}
		\mbox{or}\;
		\begin{cases}
		\frac{p_{j_l,k+\frac{1}{2}}^n}{p_{jk}^n} > P_{\mbox{sw}}, \\
		s_{j_l,k+\frac{1}{2}}^{n,L} < 0,
		\end{cases}
		\mbox{or}\;
		\begin{cases}
		\frac{p_{j_l,k-\frac{1}{2}}^n}{p_{jk}^n} > P_{\mbox{sw}}, \\
		s_{j_l,k-\frac{1}{2}}^{n,R} > 0,
		\end{cases}
		\end{equation*}
		the cell $I_{jk}$ is marked  {and the solution in $I_{jk}$ is evolved} by the conservative scheme
		\begin{equation*}
		\overline{\vec U}_{jk}^* = \overline{\vec U}^n_{jk}
		+ \frac{\tau}{3(1-\alpha)} L_{jk} (\overline{\vec U}^n)
		+  \frac{\tau^2}{12 (1-\alpha)}  \pa_t L_{jk}(\overline{\vec U}^n),
		\end{equation*}
		where \begin{align*}
		L_{jk}^n(\vec U)
		= & - \frac{1}{\Delta x}  \left( \sum_{l=1}^K \om_l \vec F_1(\vec U_{j+\frac{1}{2},k_l}^{RP,n}) - \sum_{l=1}^K \om_l \vec F_1(\vec U_{j-\frac{1}{2},k_l}^{RP,n}) \right) \;
		\\ & - \;\frac{1}{\Delta y}  \left( \sum_{l=1}^K \om_l \vec F_2(\vec U_{j_l,k+\frac{1}{2}}^{RP,n}) - \sum_{l=1}^K  \om_l\vec F_2(\vec U_{j_l,k-\frac{1}{2}}^{RP,n}) \right),
		\end{align*}
		the term $\pa_t L_{jk}(\overline{\vec U}^n)$ can be similarly given   to \eqref{eq:1dLt},
		and $P_{\mbox{sw}}=1+\epsilon$ denotes the shock sensing parameter.

		\item Otherwise, the cell $I_{jk}$ is marked to be updated by the non-conservative scheme
		\begin{equation*}
		\overline{\vec V}_{jk}^* = \overline{\vec V}^n_{jk}
		+  \frac{\tau}{3(1-\alpha)} \widetilde{L}_{jk} (\overline{\vec V}^n)
		+  \frac{\tau^2}{12 (1-\alpha)} \pa_t \widetilde{L}_{jk}(\overline{\vec V}^n),
		\end{equation*}
		with
		
		\begin{align*}
		-\widetilde{L}_{jk} (\overline{\vec V}^n)
		= &   \frac{1}{\Delta x}  \sum_{l=1}^K \om_l \left(
		\int_{ { \vec V^{-,n}_{j-\frac{1}{2},k_l} } }
		^ {\vec V_{j-\frac{1}{2},k_l}^{RP,n}}      \widetilde{\vec A}^+ \;d\vec V
		+   \int^{ {  \vec V^{+,n}_{j+\frac{1}{2},k_l} } }   _{\vec V_{j+\frac{1}{2},k_l}^{RP,n}}
		\widetilde{\vec A} ^- \;d\vec V
		+ \int_{\vec V_{j-\frac{1}{2},k_l}^{RP,n}} ^{\vec V_{j+\frac{1}{2},k_l}^{RP,n}}
		\widetilde{\vec A}  \;d\vec V   \right)
		\\
		+ &  \frac{1}{\Delta y}  \sum_{l=1}^K \om_l \left(  \int_{  { \vec V^{-,n}_{j_l,k-\frac{1}{2}} }   }^{\vec V_{j_l,k-\frac{1}{2}}^{RP,n}}
		\widetilde{\vec B}^+ \;d\vec V
		+    \int^{ {  \vec V^{+,n}_{j_l,k+\frac{1}{2}} } }_{\vec V_{j_l,k+\frac{1}{2}}^{RP,n}}
		\widetilde{\vec B}^- \;d\vec V
		+ \int_{\vec V_{j_l,k-\frac{1}{2}}^{RP,n}} ^{\vec V_{j_l,k+\frac{1}{2}}^{RP,n}}
		\widetilde{\vec B}  \;d\vec V   \right),
		\end{align*}
		and
		\begin{align*}
		-\pa_t \widetilde{L}_{jk} (\overline{\vec V}^n)
		= &  \frac{1}{\Delta x} \sum_{l=1}^K \om_l \left(    \widetilde{\vec A}(\vec V_{j+\frac{1}{2},k_l}^{RP,n}) \cdot \pa_t \vec V_{j+\frac{1}{2},k_l}^n
		- \widetilde{\vec A} (\vec V_{j-\frac{1}{2},k_l}^{RP,n}) \cdot \pa_t \vec V_{j-\frac{1}{2},k_l}^n   \right) \\
		+ & \frac{1}{\Delta y} \sum_{l=1}^K \om_l \left(    \widetilde{\vec B}(\vec V_{j_l,k+\frac{1}{2}}^{RP,n}) \cdot \pa_t \vec V_{j_l,k+\frac{1}{2}}^n - \widetilde{\vec B} (\vec V_{j_l,k-\frac{1}{2}}^{RP,n})\cdot \pa_t \vec V_{j_l,k-\frac{1}{2}}^n   \right),
		\end{align*}
		where  {
		$ \vec V^{\pm,n}_{j+\frac{1}{2},k_l},  \vec V^{\pm,n}_{j_l,k+\frac{1}{2}} $
		are obtained from $ \vec U^{\pm,n}_{j+\frac{1}{2},k_l},  \vec U^{\pm,n}_{j_l,k+\frac{1}{2}} $.
		}
		The above integrals are evaluated by using a numerical integration such as the Gauss-Legendre quadrature along a simple canonical path defined by
		\begin{equation}
		\vec \Psi(s; \vec V_L, \vec V_R) = \vec V_L + s(\vec V_R - \vec V_L), \quad s \in [0,1].
		\end{equation}
	\end{itemize}
	
	\item[Step 3.]  With the ``initial'' data $\{\overline{\vec U}^*_{{jk}}\}$, {reconstruct values $\vec U^{\pm,*}_{j+\frac12,k_l}, (\partial_x \vec U)^{\pm,*}_{j+\frac12,k_l}, (\partial_y \vec F_2)^{\pm,*}_{j+\frac12,k_l}$ and $\vec U^{\pm,*}_{j_l,k+\frac12}, (\partial_x \vec U)^{\pm,*}_{j_l,k+\frac12}, (\partial_x \vec F_1)^{\pm,*}_{j_l,k+\frac12}$}.  Then, similar to Step 1,
	compute  $\vec U_{{j+\frac{1}{2},k_l}}^{RP,*}$,
	$\left(\frac{\pa}{\pa t}\vec U\right)_{{j+\frac{1}{2},k_l}}^* $,  $\vec U_{j_l,k+\frac{1}{2}}^{RP,*}$ and $\left(\frac{\pa}{\pa t}\vec U\right)_{{j_l,k+\frac{1}{2}}}^* $.
	
	\item[Step 4.]  Evolve the solution $\overline{\vec U}_{{jk}}^{n+1}$ or
	$\overline{\vec V}_{{jk}}^{n+1}$ at $t_{n+1} = t_n + \Delta t$ by
	the adaptive primitive-conservative scheme in Step 2
	with
	\begin{equation}
	\overline{\vec U}_{jk}^{n+1} = \overline{\vec U}_{jk}^n + \tau L_{jk}( \overline{\vec U}^n)
	+ \frac{\alpha\tau^2}{2} \pa_t L_{jk}(\overline{\vec U}^n)
	+ \frac{(1-\alpha)\tau^2}{2} \pa_t L_{jk}(\overline{\vec U}^*),
	\end{equation}
	and
	\begin{equation}
	\overline{\vec V}_{jk}^{n+1} = \overline{\vec V}_{jk}^n + \tau \widetilde{L}_{jk}( \overline{\vec V}^n)
	+ \frac{\alpha\tau^2}{2} \pa_t \widetilde{L}_{jk}(\overline{\vec V}^n)
	+ \frac{(1-\alpha)\tau^2}{2} \pa_t \widetilde{L}_{jk}(\overline{\vec V}^*).
	\end{equation}
	%
\end{itemize}



\section{Numerical results}
\label{sec:example}
{In this section, several one-dimensional and two-dimensional tests are presented to demonstrate the performance of our methods.}
Unless otherwise stated, the time stepsizes for the 1D and 2D schemes are respectively chosen as
\begin{equation*}
\tau = \frac{\mu \Delta x}{\max_{\ell, j}\{ |\la_\ell^1( \overline{\vec U}_j^n )|\}},
\end{equation*}
and
\begin{equation*}
\tau = \frac{\mu}{
	\max_{\ell,j,k}\{   |\la_\ell^1 \left( \overline{\vec U}_{jk}^n \right) | \} /{\Delta x}
	+ \max_{\ell,j,k}\{   | \la_\ell^2\left( \overline{\vec U}_{jk}^n \right) | \} /{\Delta y} },
\end{equation*}
where $\la_\ell^1$ ({resp.} $\la_\ell^2$) is the $\ell$th eigenvalue of   2D  RHD equations in the direction $x$ ({resp.} $y$), $\ell=1,2,3,4$.
{The} parameter $\al$ is taken $\frac13$, the CFL number $\mu$ are  taken as  {$0.7$ and $0.5$}
for the 1D and 2D problems, respectively.
Our numerical experiments  { show} that there is no obvious difference between  $\alpha=\frac13$ {and
$\alpha=(1-6 {\tau})/(3-6 {\tau})$ or $\alpha=\frac13+\tau$}.
Here we take  $K \geq 3$ in order to ensure that {the degree of} the algebraic precision of   corresponding quadrature is at least 4.

\subsection{One-dimensional case}
\begin{example}[Smooth problem] \label{example3.1}\rm
	It is used to verify the numerical accuracy. The initial data are
	taken as
	\begin{equation*}
	(\rho ,  u , p)(x,0) \; = \;( 1+0.2\sin(2x), 0.2, 1), \ x\in [0,\pi],
	\end{equation*}
	and the periodic boundary condition is specified.  The exact solutions can be given by
	\begin{equation*}
	\rho (x,t) \; = \; 1+0.2\sin\big(2(x-u  t)\big),  \  u(x,t) = 0.2,\  p(x,t) = 1.
	\end{equation*}
	In our computations,  the adiabatic index $\Gamma=5/3$ and {$ \tau = \frac{\mu \Delta x^{5/4}}{\max_{\ell, j}\{ |\la_\ell^1( \overline{\vec U}_j^n )|\}}, $}
	the computational domain $[0,\pi]$ is divided into $N$ uniform cells.
	Tables \ref{tabel:001}$-$\ref{table:001b} list the errors and convergence rates in $\rho$ at $t = 2$
	obtained by using our 1D method with different $\alpha$. It is seen that the two-stage schemes  {can} get the theoretical orders.
	
	\begin{table}[htpb]
		\centering
		\caption{The errors and convergence rates for solution at $t = 2$.  $\alpha=\frac13$. } \label{tabel:001}
		\begin{tabular}{ccccccc}
			\hline
			$N$ & $l^1$ error & order & $l^2$ error & order & $l^{\infty}$ error & order \\ \hline
			5    &2.8450e-02   & -    &3.2450e-02   & -    &4.5761e-02   & -\\
			10   &2.5393e-03   & 3.4859    &2.9509e-03   & 3.4590    &3.8805e-03   & 3.5598    \\
			20   &1.1042e-04   & 4.5233    &1.3168e-04   & 4.4861    &1.9960e-04   & 4.2811    \\
			40   &3.3904e-06   & 5.0255    &4.0143e-06   & 5.0357    &7.2420e-06   & 4.7846    \\
			80   &1.0513e-07   & 5.0112    &1.2192e-07   & 5.0412    &2.1990e-07   & 5.0415    \\
			160    &3.3151e-09   & 4.9870    &3.7593e-09   & 5.0193    &6.9220e-09   & 4.9895    \\
			\hline
		\end{tabular}
	\end{table}
	
	\begin{table}[htpb]
		\centering
		\caption{{Same as Table \ref{tabel:001} except for }  $\alpha=\frac{1-6\tau}{3-6\tau}$.} \label{table:001a}	
		\begin{tabular}{ccccccc}
			\hline
			$N$ & $l^1$ error & order & $l^2$ error & order & $l^{\infty}$ error & order \\ \hline
			5    &2.8452e-02   & -    &3.2453e-02   & -    &4.5767e-02   & -\\
			10   &2.5392e-03   & 3.4861    &2.9509e-03   & 3.4591    &3.8805e-03   & 3.5600    \\
			20   &1.1042e-04   & 4.5233    &1.3168e-04   & 4.4861    &1.9960e-04   & 4.2810    \\
			40   &3.3904e-06   & 5.0255    &4.0143e-06   & 5.0357    &7.2420e-06   & 4.7846    \\
			80   &1.0513e-07   & 5.0112    &1.2192e-07   & 5.0412    &2.1990e-07   & 5.0415    \\
			160    &3.3151e-09   & 4.9870    &3.7593e-09   & 5.0193    &6.9221e-09   & 4.9895    \\
			\hline
		\end{tabular}
	\end{table}
	
	\begin{table}[htpb]
		\centering
		\caption{{Same as Table \ref{tabel:001} except for }    $\alpha=\frac13 + \tau$. } \label{table:001b}	
		\begin{tabular}{ccccccc}
			\hline
			$N$ & $l^1$ error & order & $l^2$ error & order & $l^{\infty}$ error & order \\ \hline
			5    &2.8448e-02   & -    &3.2447e-02   & -    &4.5756e-02   & -\\
			10   &2.5393e-03   & 3.4858    &2.9510e-03   & 3.4588    &3.8805e-03   & 3.5596    \\
			20   &1.1042e-04   & 4.5233    &1.3168e-04   & 4.4861    &1.9960e-04   & 4.2811    \\
			40   &3.3904e-06   & 5.0255    &4.0143e-06   & 5.0357    &7.2420e-06   & 4.7846    \\
			80   &1.0513e-07   & 5.0112    &1.2192e-07   & 5.0412    &2.1990e-07   & 5.0415    \\
			160    &3.3151e-09   & 4.9870    &3.7593e-09   & 5.0193    &6.9224e-09   & 4.9894 \\
			\hline
		\end{tabular}
	\end{table}
	
\end{example}

\begin{example}[Riemann problems] \label{example3.2}\rm
	This example  { considers four} Riemann problems, whose
	initial data are given in Table \ref{table:002} with
	the initial discontinuity  located at $x = 0.5$ in the computational domain $[0,\,1]$.
	The adiabatic index $\G$ is taken as  $5/3$, but  $4/3$ for the third problem.
	The numerical solutions (``$\circ$'') at $t = 0.4$ are displayed in Figs. \ref{fig:002}-\ref{fig:005} with  400 uniform cells, respectively.
	The exact solutions ({``solid line"}) with 2000 uniform cells are also provided for comparison.
	It is seen that the numerical solutions are in good agreement with the exact,
	and the shock and rarefaction waves and contact discontinuities are well captured,
	and the positivity of the density and the pressure can be well-preserved.
	However, there exist {slight} oscillations in the density behind the left-moving
	shock wave of {\tt RP3}  and  serious undershoots in the density at $x = 0.5$ {of} {\tt RP4}, similar to those in the literature,
	see e.g. \cite{Wu-Tang-JCP2015,Wu-Yang-Tang:2014,Yang-He-Tang:2011}{.}
	It is worth noting that no obvious oscillation is observed in the densities of {\tt RP3}  obtained by the Runge-Kutta central DG methods \cite{Zhao-Tang:2017a} and {the} adaptive moving mesh method \cite{HeAdaptiveRHD}.
	\begin{table}[htpb]
		\centering
		\caption{ Initial data of four RPs. } \label{table:002}
		\begin{tabular}{c|c|ccc|c|c|ccc}
			\hline
			\multicolumn{2}{c|}{}						& $\rho$ & $u$   & $p$ & \multicolumn{2}{c|}{}						& $\rho$ & $u$   & $p$\\
			\hline
			\hline
			\multirow{2}{*}{\tt RP1} 		& left state	 	& 10 & 0 & 40/3
			& \multirow{2}{*}{\tt RP2} 		& left state	 	& 1 & 0 & $10^{3}$ \\
			\cline{2-5}
			\cline{7-10}
			& right state	& 1 & 0 & $10^{-6}$ &						& right state	& 1 & 0 & $10^{-2}$ \\
			\hline
			\multirow{2}{*}{\tt RP3} 	& left state	 	& 1 & 0.9 & 1
			& \multirow{2}{*}{\tt RP4} 	& left state	 	& 1 & $-0.7$ & 20\\
			\cline{2-5}
			\cline{7-10}
			& right state	& 1 & 0 & 10 & 						& right state	& 1 & 0.7 & 20\\
			\hline
		\end{tabular}
	\end{table}
	
	\begin{figure}[htbp]
		\setlength{\abovecaptionskip}{0.cm}
		\setlength{\belowcaptionskip}{-0.cm}
		\subfigure[$\rho/10$]{
			\begin{minipage}[t]{0.3\textwidth}
				\centering
				\includegraphics[width=\textwidth]{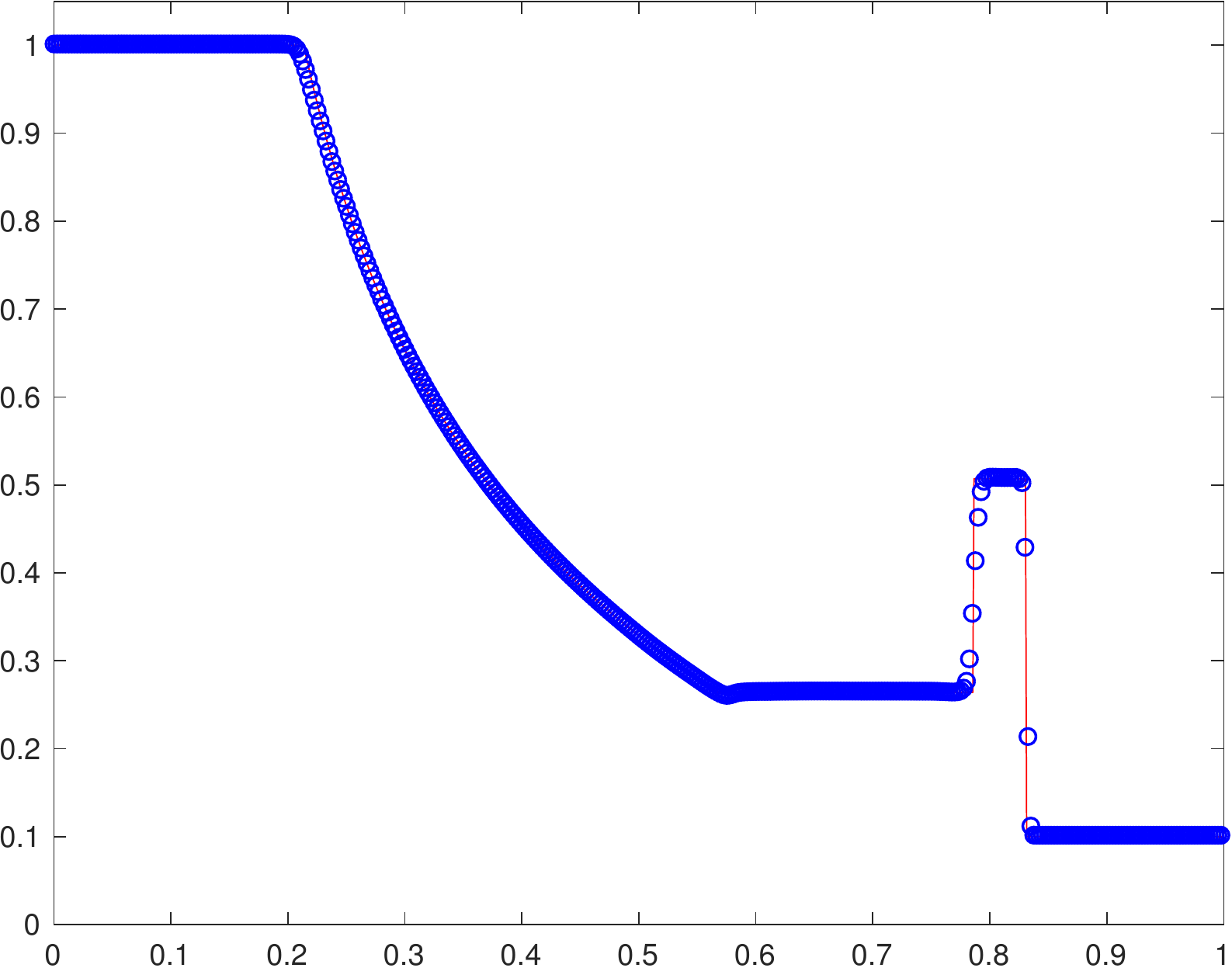}
			\end{minipage}
		}
		\subfigure[$u$]{
			\begin{minipage}[t]{0.3\textwidth}
				\centering
				\includegraphics[width=\textwidth]{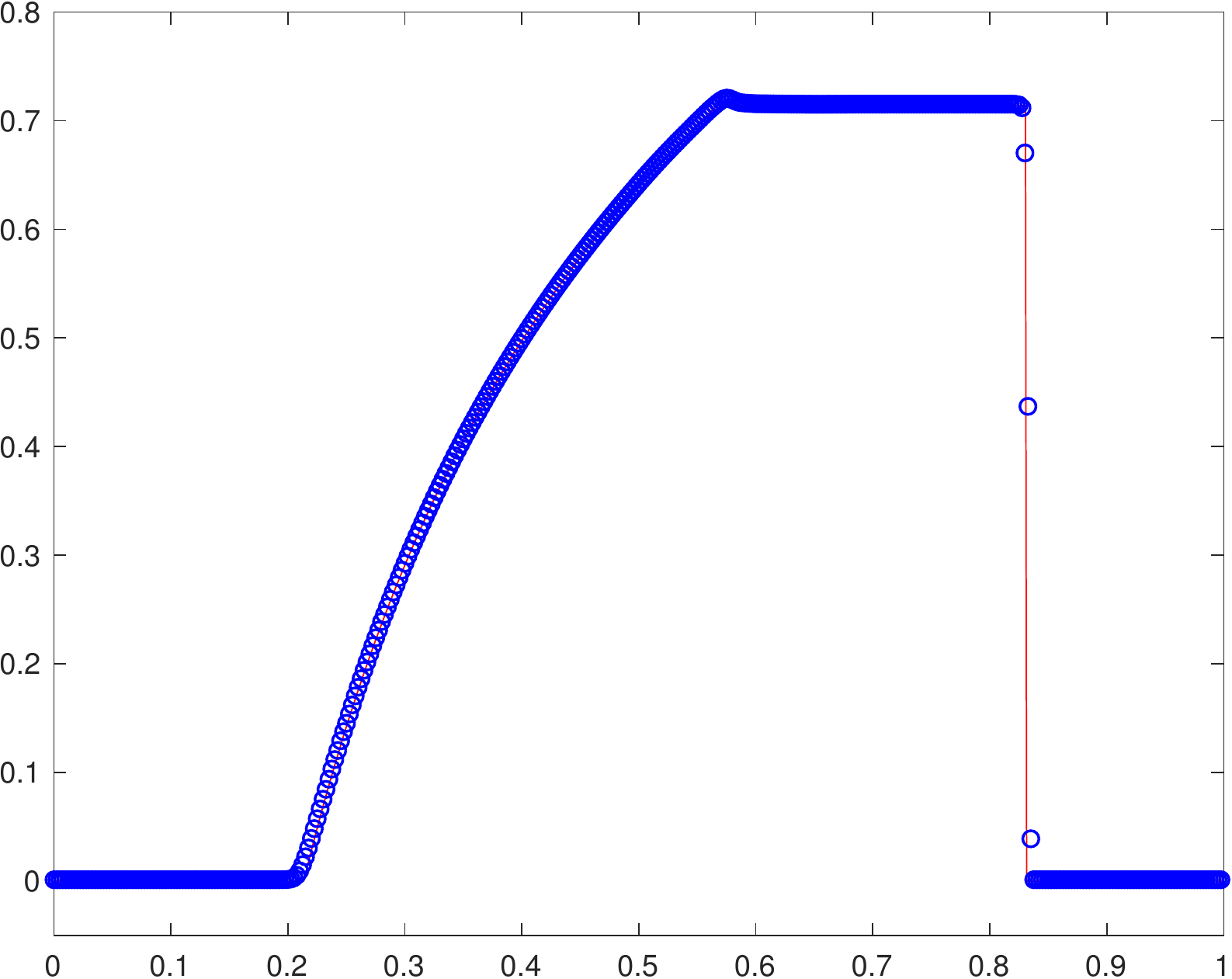}
			\end{minipage}
		}
		\subfigure[$\frac{3}{40}p$]{
			\begin{minipage}[t]{0.3\textwidth}
				\centering
				\includegraphics[width=\textwidth]{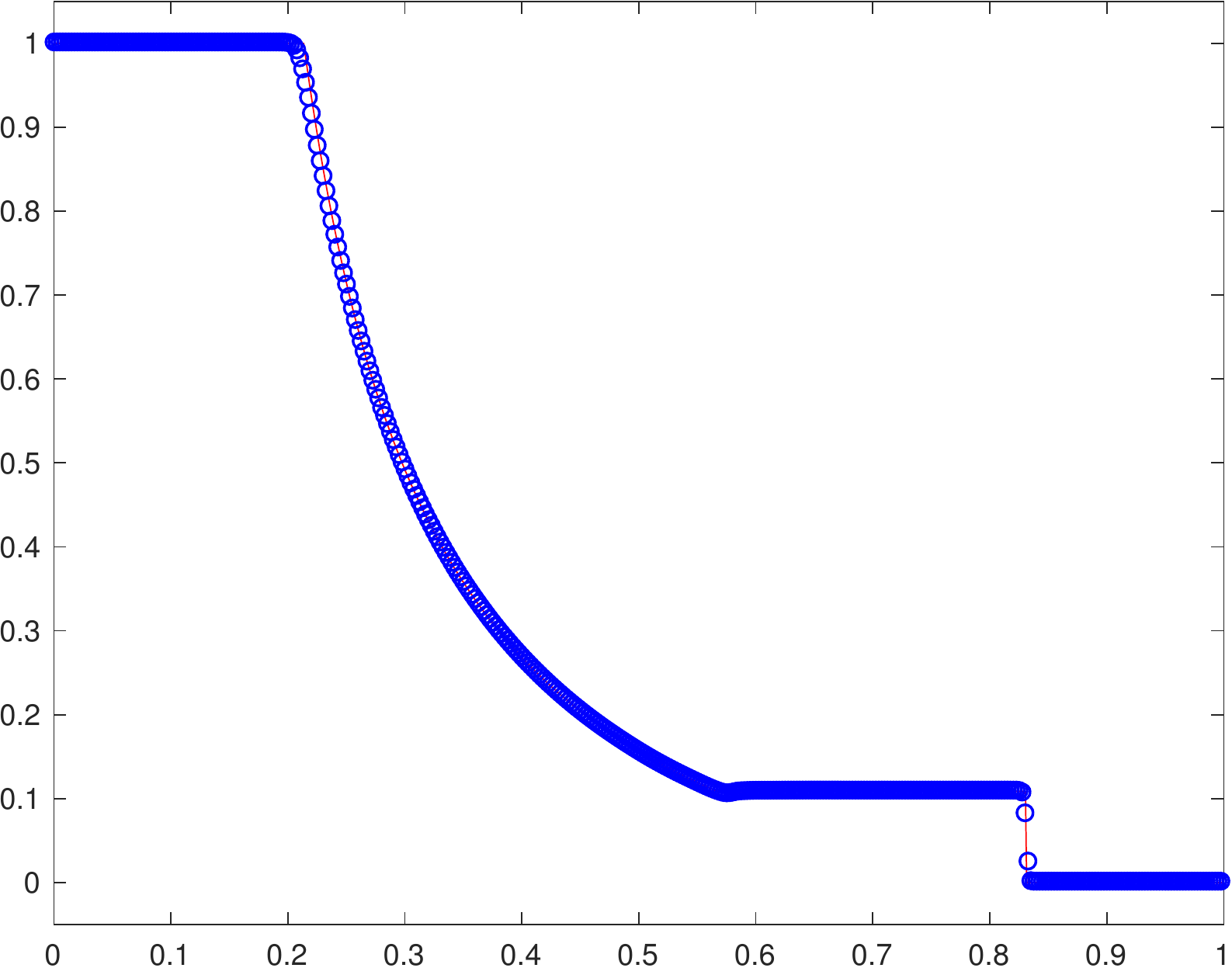}
			\end{minipage}
		}
		\caption{\small{{\tt RP1} in Example \ref{example3.2}:  The solutions at $t = 0.4$. }} \label{fig:002}
	\end{figure}

	\begin{figure}[htbp]
		\setlength{\abovecaptionskip}{0.cm}
		\setlength{\belowcaptionskip}{-0.cm}
		\subfigure[$\rho/7$]{
			\begin{minipage}[t]{0.3\textwidth}
				\centering
				\includegraphics[width=\textwidth]{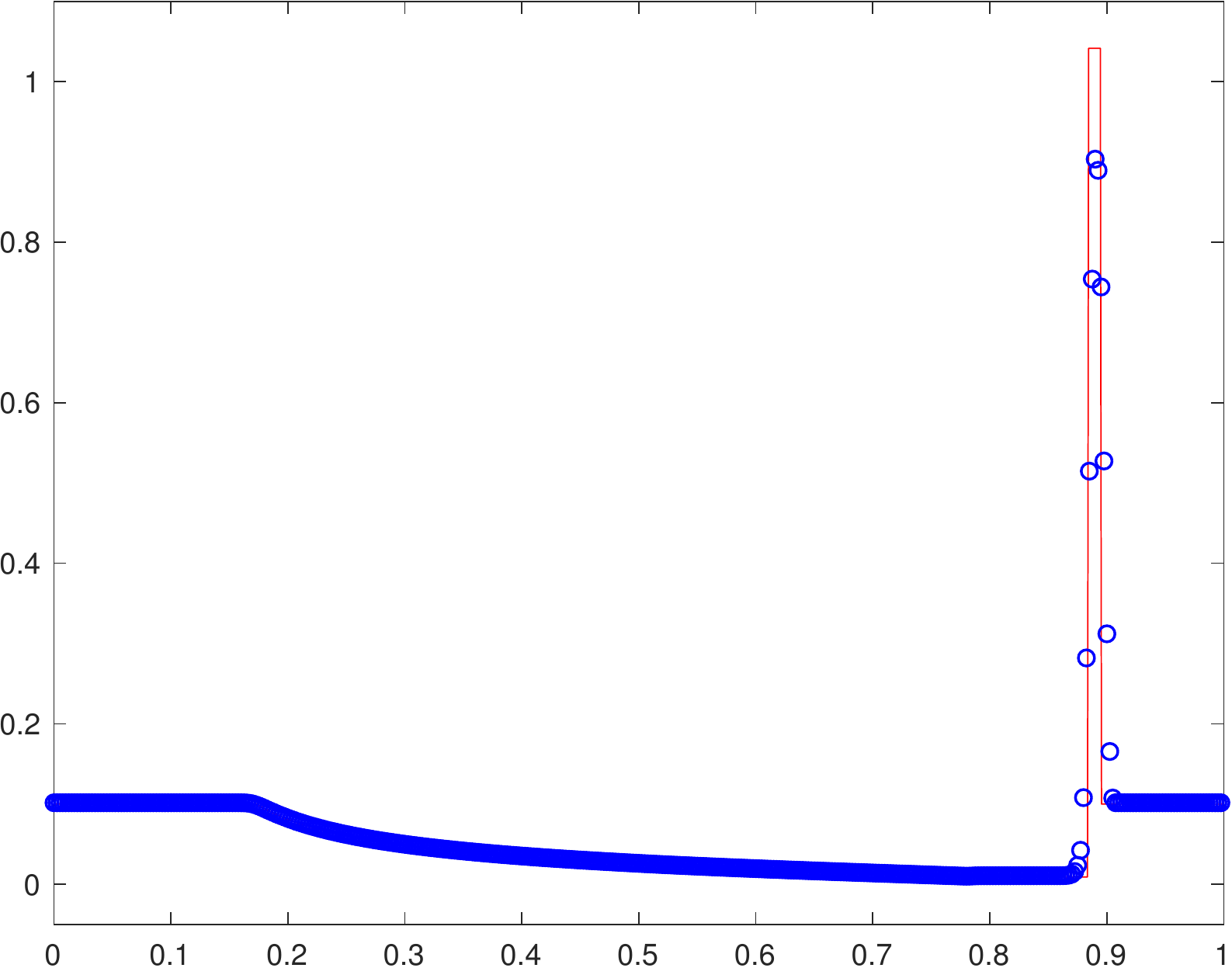}
			\end{minipage}
		}
		\subfigure[$u$]{
			\begin{minipage}[t]{0.3\textwidth}
				\centering
				\includegraphics[width=\textwidth]{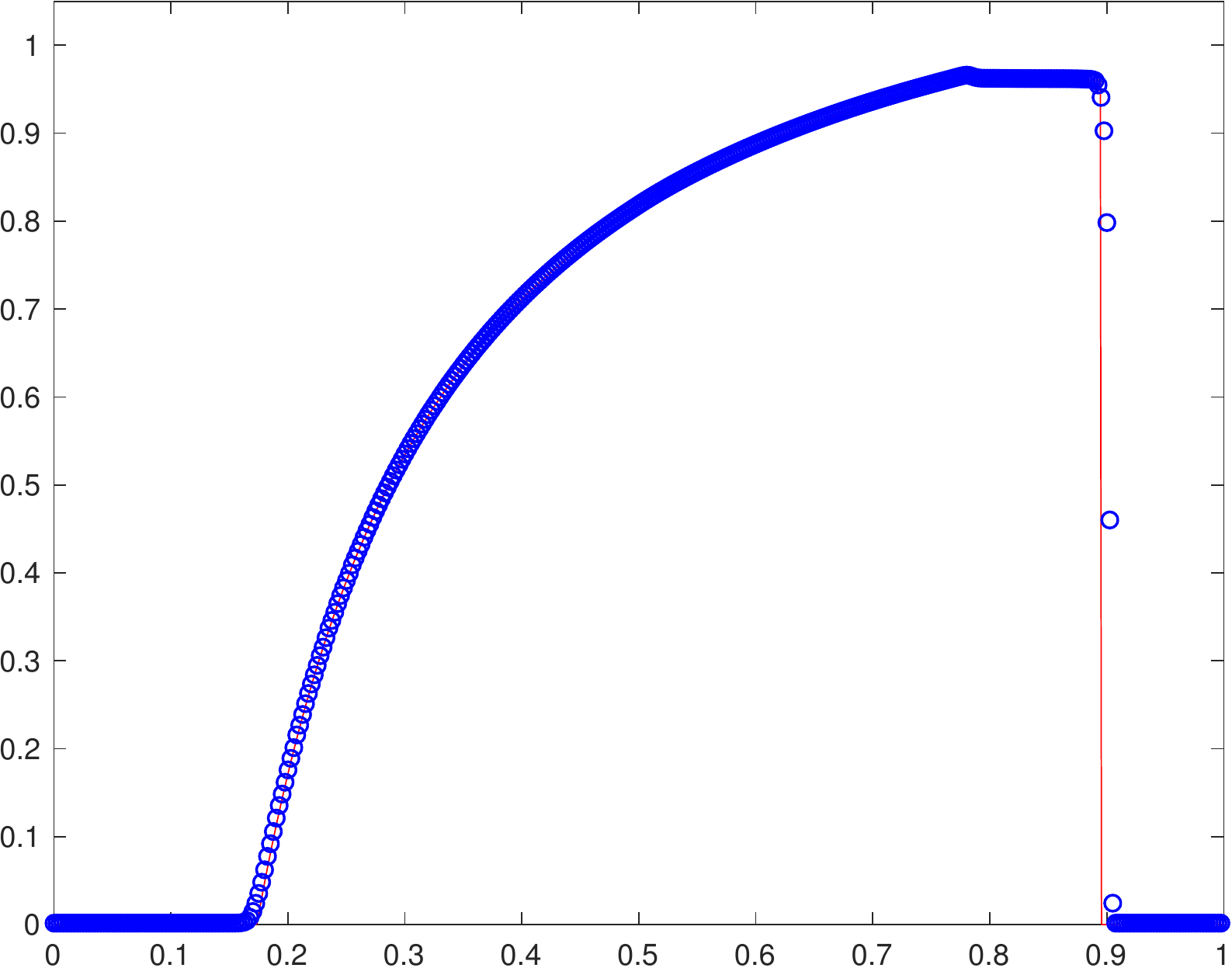}
			\end{minipage}
		}
		\subfigure[$p/1000$]{
			\begin{minipage}[t]{0.3\textwidth}
				\centering
				\includegraphics[width=\textwidth]{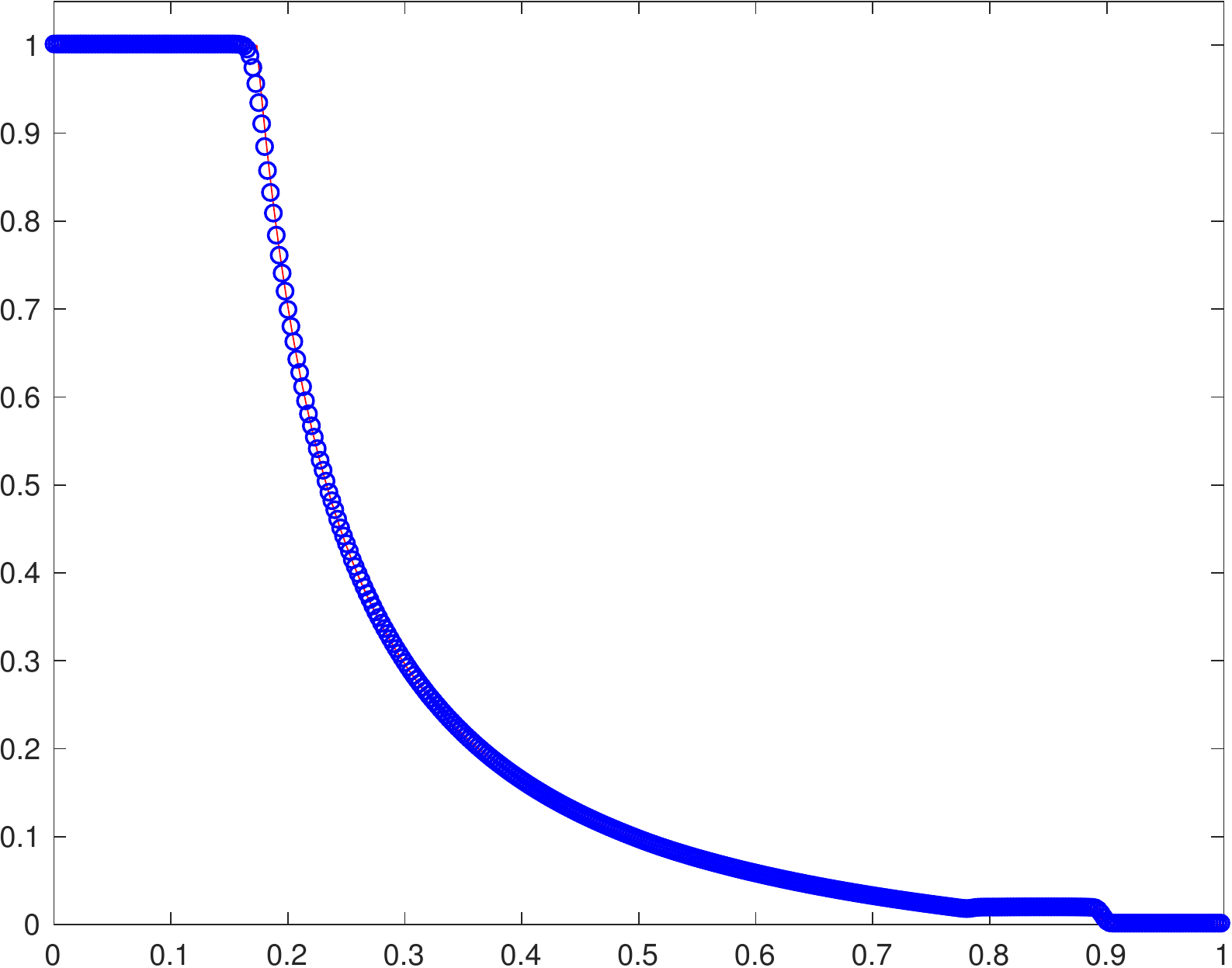}
			\end{minipage}
		}
		\caption{\small{{\tt RP2} in Example \ref{example3.2}: The solutions at $t = 0.4$. }}
		\label{fig:003}
	\end{figure}
	
	\begin{figure}[htbp]
		\setlength{\abovecaptionskip}{0.cm}
		\setlength{\belowcaptionskip}{-0.cm}
		\subfigure[$\rho/7$]{
			\begin{minipage}[t]{0.3\textwidth}
				\centering
				\includegraphics[width=\textwidth]{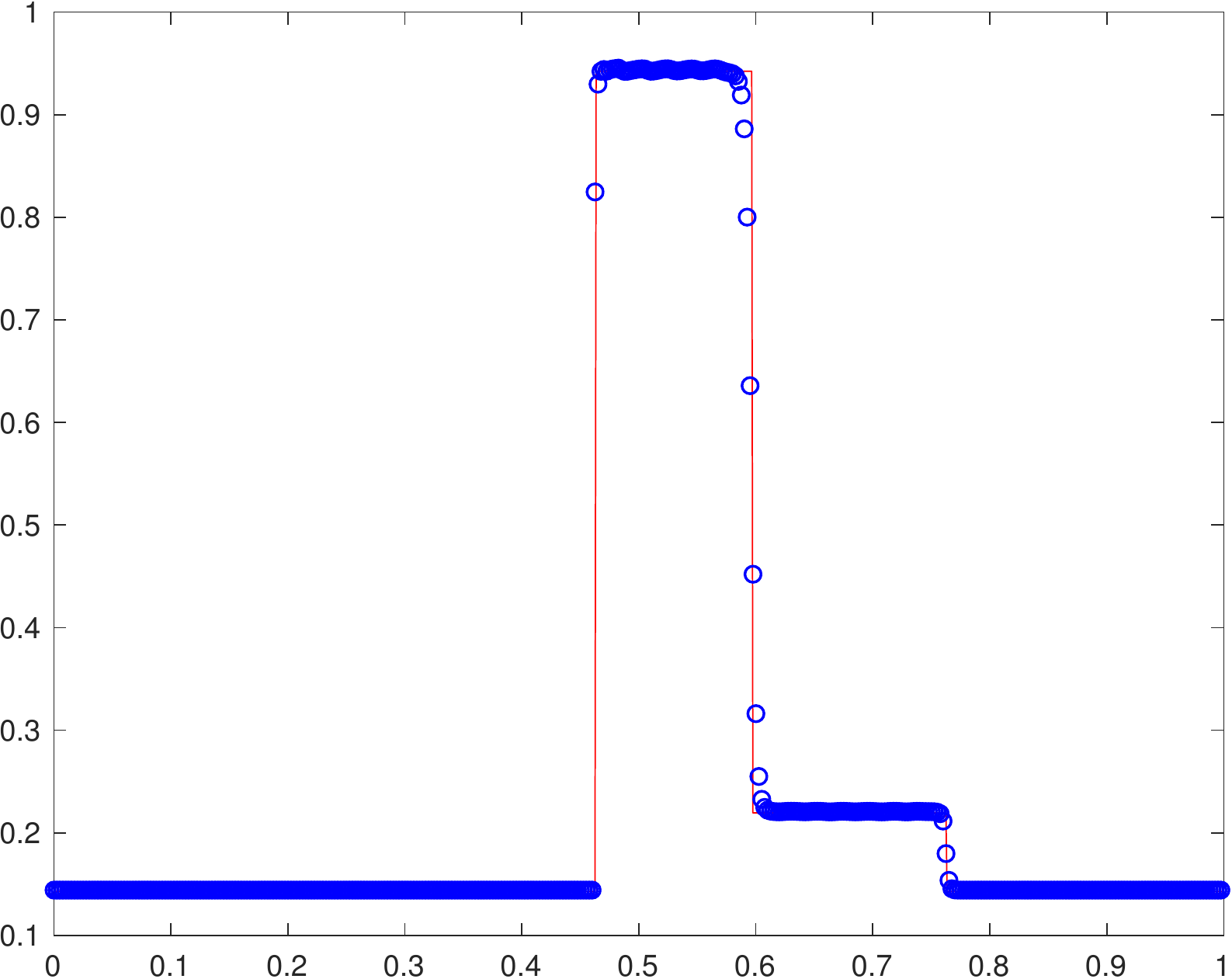}
			\end{minipage}
		}
		\subfigure[$u$]{
			\begin{minipage}[t]{0.3\textwidth}
				\centering
				\includegraphics[width=\textwidth]{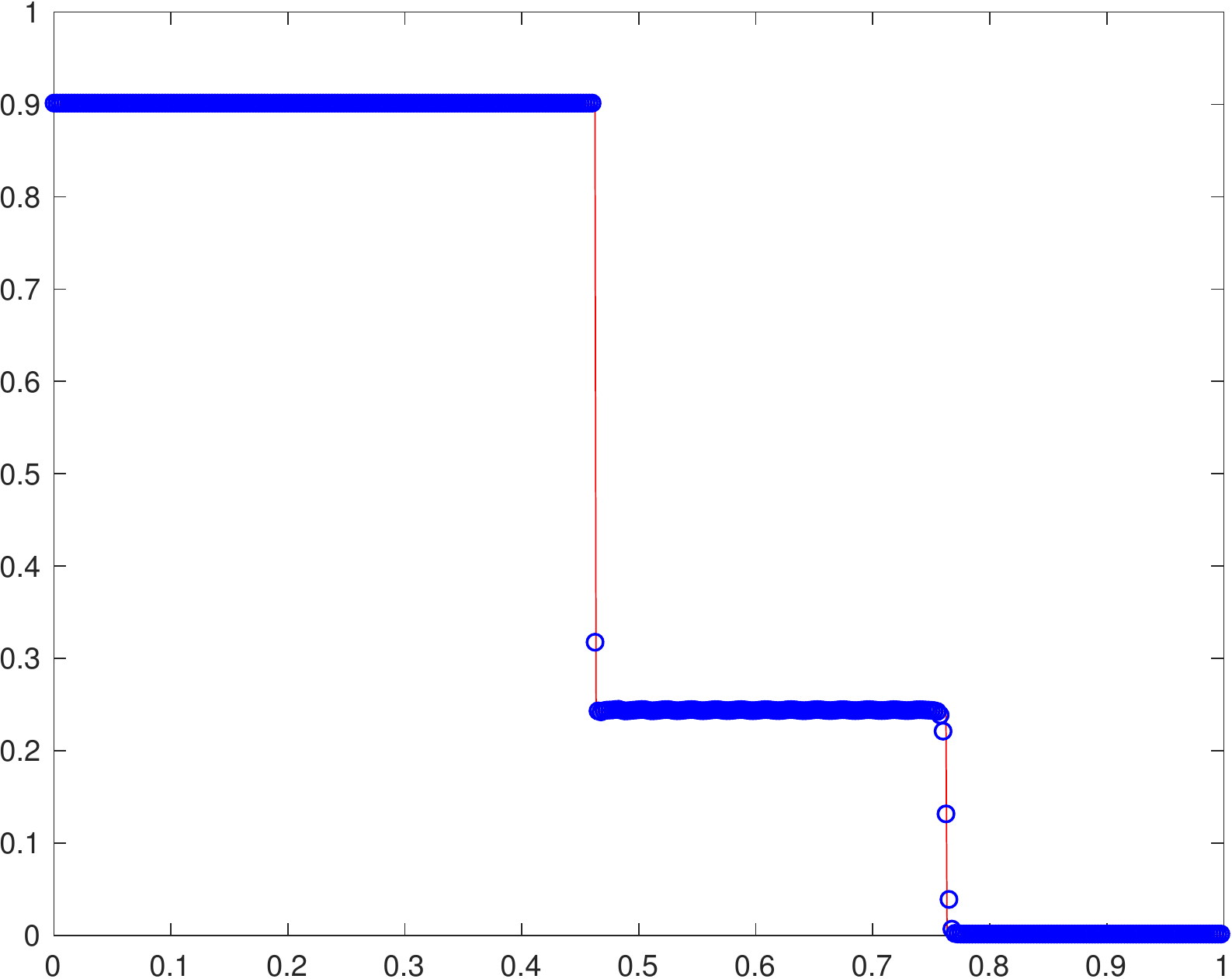}
			\end{minipage}
		}
		\subfigure[$p/20$]{
			\begin{minipage}[t]{0.3\textwidth}
				\centering
				\includegraphics[width=\textwidth]{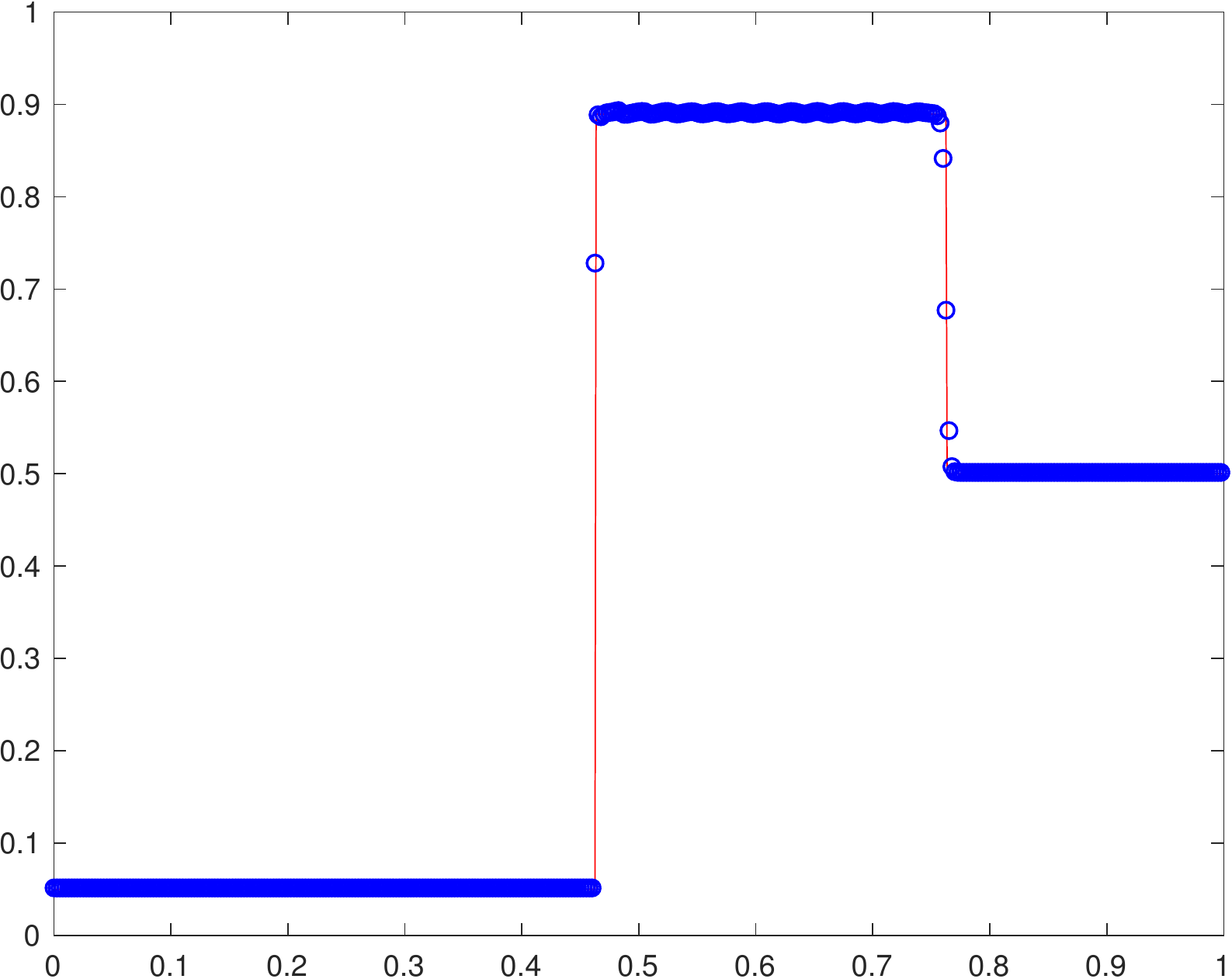}
			\end{minipage}
		}
		\caption{\small{{\tt RP3} in Example \ref{example3.2}: The solutions at $t = 0.4$. }}
		\label{fig:004}
	\end{figure}

	\begin{figure}[htbp]
		\setlength{\abovecaptionskip}{0.cm}
		\setlength{\belowcaptionskip}{-0.cm}
		\subfigure[$\rho$]{
			\begin{minipage}[t]{0.3\textwidth}
				\centering
				\includegraphics[width=\textwidth]{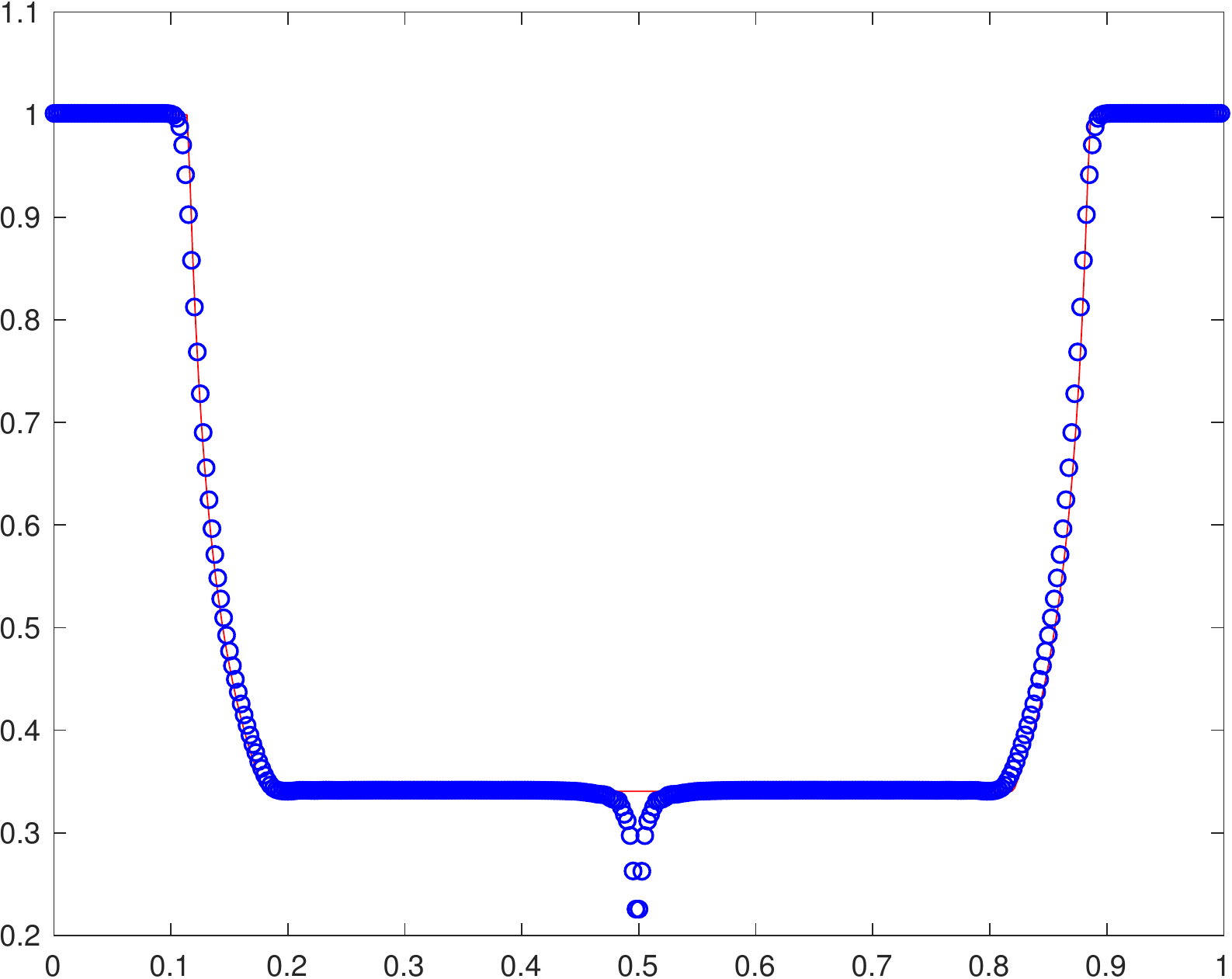}
			\end{minipage}
		}
		\subfigure[$u$]{
			\begin{minipage}[t]{0.3\textwidth}
				\centering
				\includegraphics[width=\textwidth]{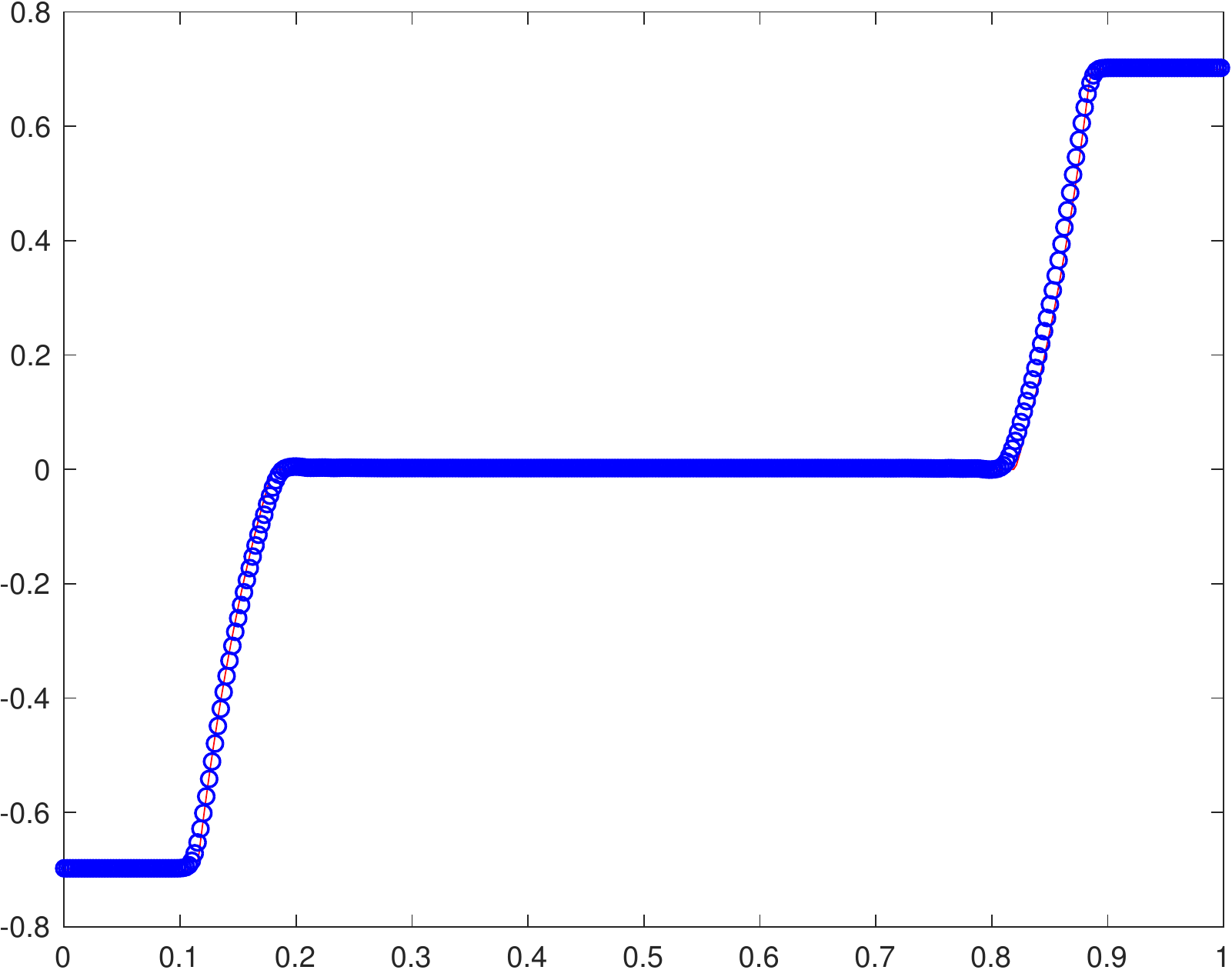}
			\end{minipage}
		}
		\subfigure[$p/20$]{
			\begin{minipage}[t]{0.3\textwidth}
				\centering
				\includegraphics[width=\textwidth]{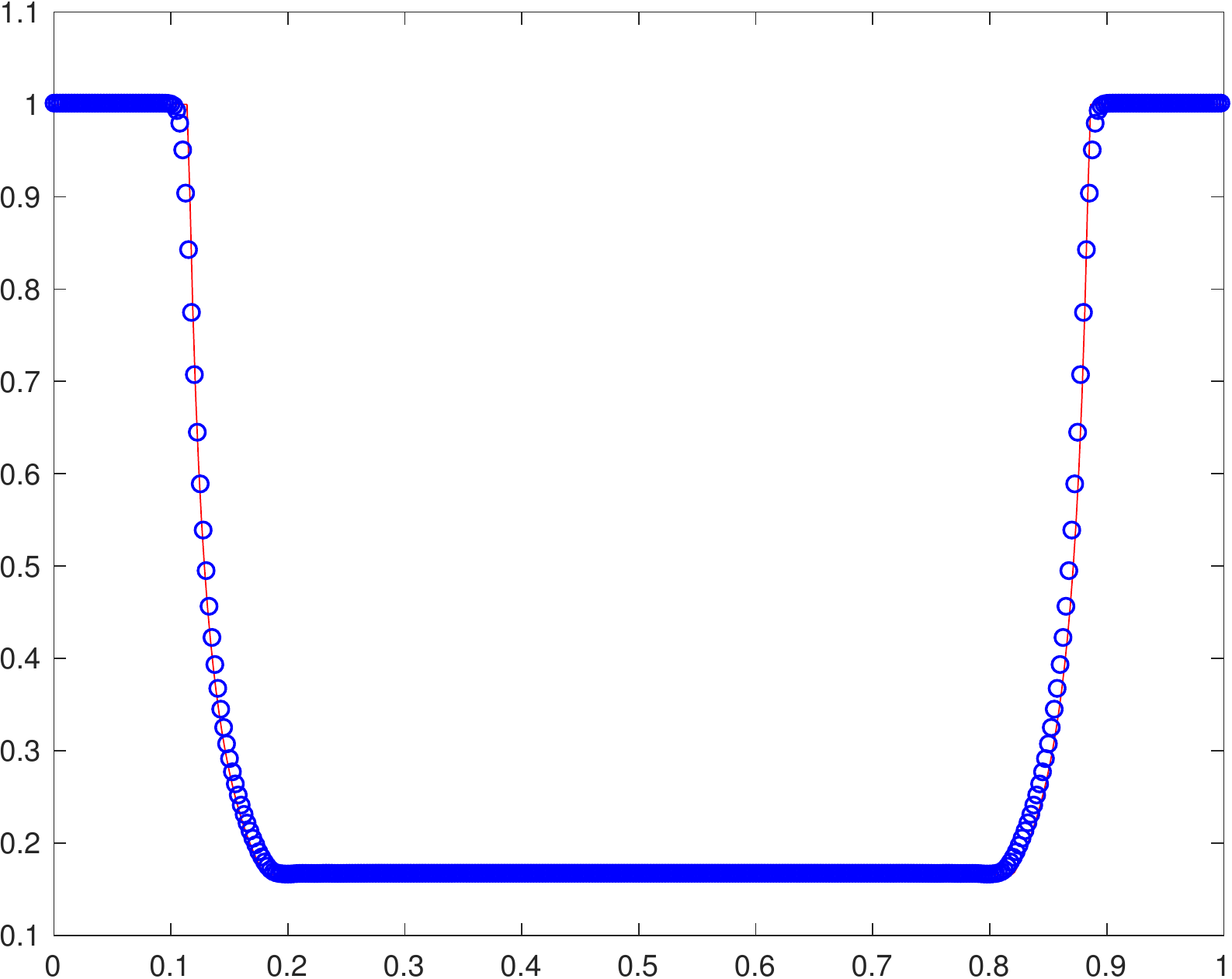}
			\end{minipage}
		}
		\caption{\small{{\tt RP4} in Example \ref{example3.2}: The solutions at $t = 0.4$. }}\label{fig:005}
	\end{figure}
\end{example}

\begin{example}[Density perturbation problem]\label{example3.3}\rm
	This is a more general Cauchy problem obtained
	by including a density perturbation in the initial data of   corresponding Riemann
	problem   in order to test the ability of shock-capturing schemes to resolve small scale flow features, which may give a good indication of the numerical (artificial) viscosity of the scheme.
	The initial data are given by
	\begin{equation*}
	(\rho ,  u , p)(x,0)   =
	\begin{cases}
	(5 ,\, 0 ,\, 50 ) , & x < 0.5, \\
	( 2+0.3\sin(50x) ,\, 0 ,\,5) , & x>0.5.
	\end{cases}
	\end{equation*}
	The computational domain is taken as $[0,\,1]$ with the out-flow boundary conditions.
	Fig. \ref{fig:006} shows the solutions at $t = 0.35$ with 400 uniform cells and $\G = 5/3$, where the reference solution ({``solid line"}) are obtained with 2000 uniform cells. It can be seen that our scheme resolves the high frequency waves   better
	than the third order GRP scheme \cite{Wu-Yang-Tang:2014}.

	\begin{figure}[htbp]
		\setlength{\abovecaptionskip}{0.cm}
		\setlength{\belowcaptionskip}{-0.cm}
		\subfigure[$\rho$]{
			\begin{minipage}[t]{0.3\textwidth}
				\centering
				\includegraphics[width=\textwidth]{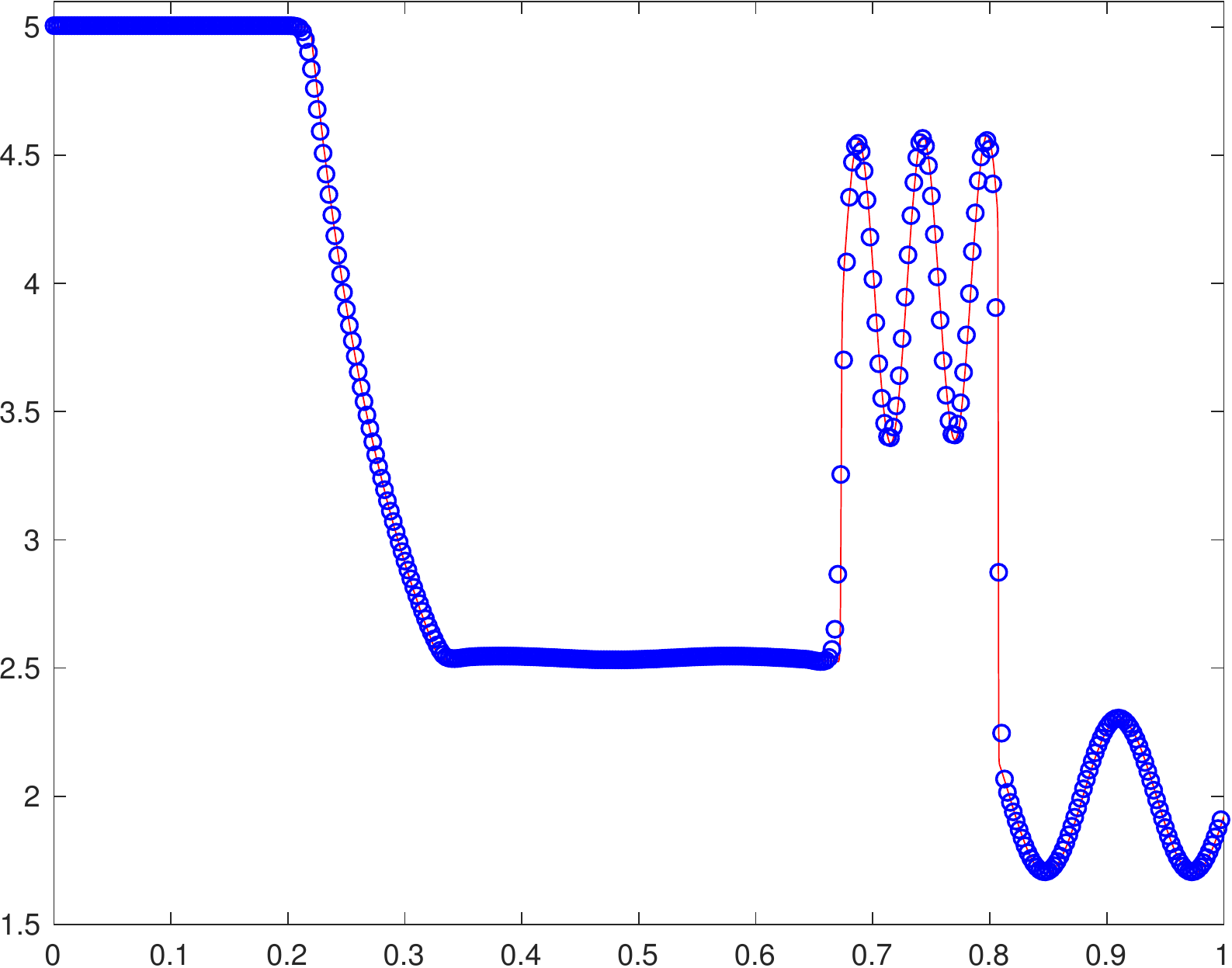}
			\end{minipage}
		}
		\subfigure[$u$]{
			\begin{minipage}[t]{0.3\textwidth}
				\centering
				\includegraphics[width=\textwidth]{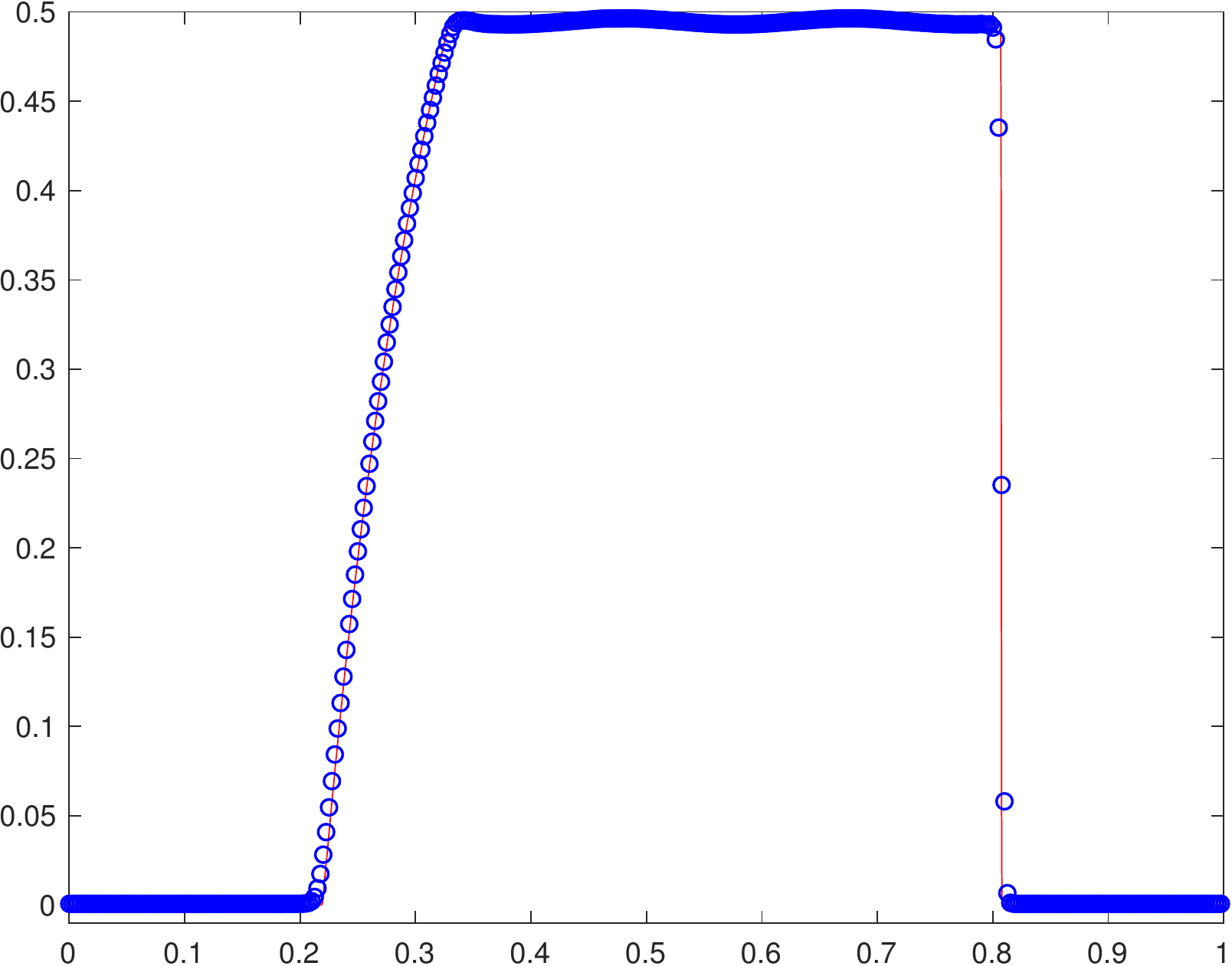}
			\end{minipage}
		}
		\subfigure[$p$]{
			\begin{minipage}[t]{0.3\textwidth}
				\centering
				\includegraphics[width=\textwidth]{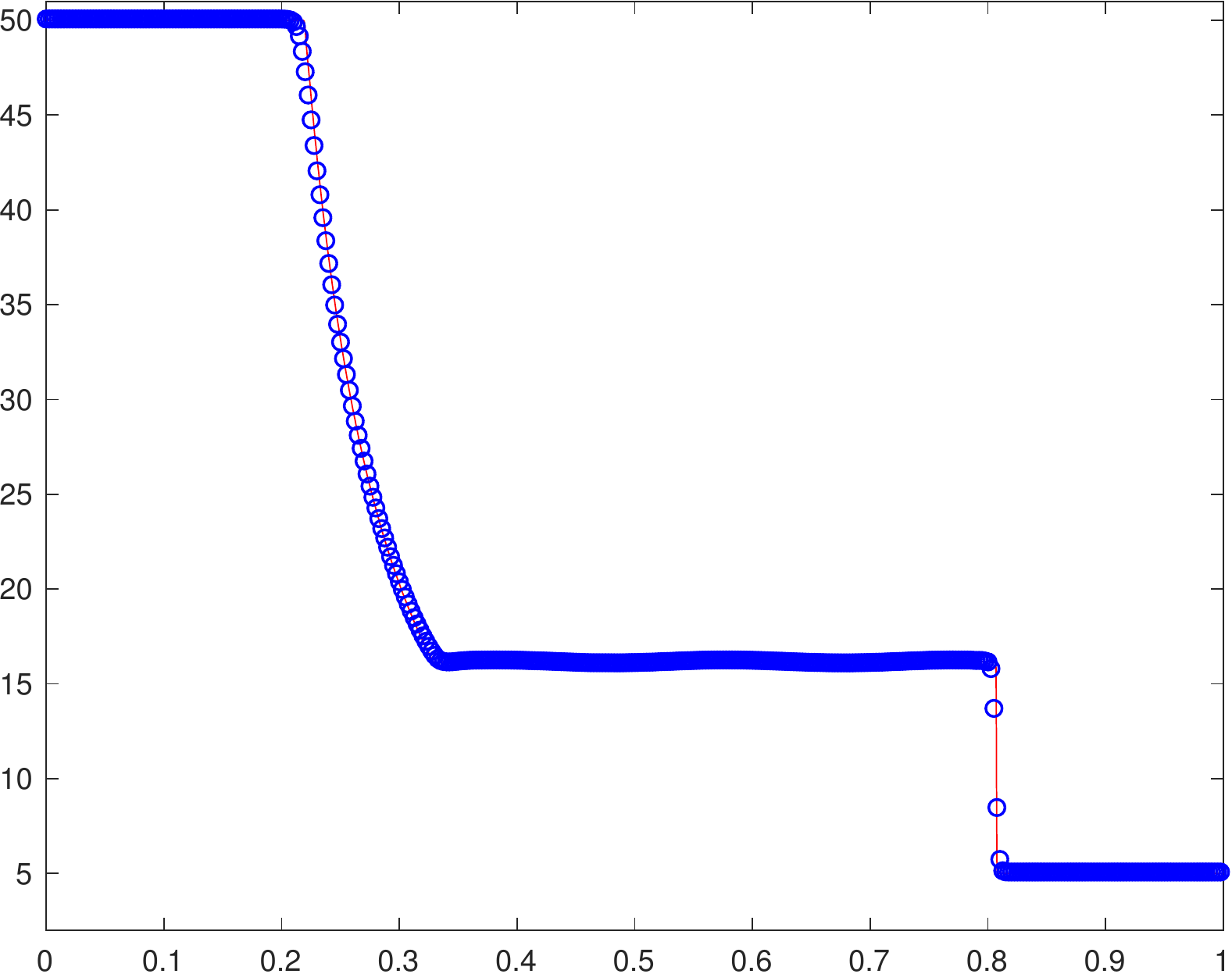}
			\end{minipage}
		}
		\caption{\small{Example \ref{example3.3}: The solutions at $t = 0.35$.}}
		\label{fig:006}
	\end{figure}
\end{example}

\begin{example}[Collision of two blast waves]\label{example3.4}\rm
	The last 1D example simulates the collision
	of two strong relativistic blast waves. The initial data for this initial-boundary value
	problems  consist  of three constant states of an ideal gas with $\G = 1.4$, at
	rest in the domain [0,1] with outflow boundary conditions at $x = 0$ and 1.
	The initial data are given {by}
	\begin{equation*}
	(\rho , u , p)(x,0) =
	\begin{cases}
	(1 ,\, 0 ,\, 10^3 ) , & 0 \leq x < 0.1, \\
	(1 ,\, 0 ,\, 10^{-2} ) , & 0.1 \leq x < 0.9, \\
	(1 ,\, 0 ,\,10^2) , & 0.9 \leq x < 1.0.
	\end{cases}
	\end{equation*}
	Two strong blast waves develop and collide, producing a
	new contact discontinuity.
	Figs. \ref{fig:007}$-$\ref{fig:007b} show
	the close-up of solutions at $t = 0.43$ with 4000 uniform cells
	and   different $\alpha$, where the exact solution ( ``solid line") are obtained by the exact RP solver with 4000 uniform cells. 	
	It is seen that our scheme  can well resolve those strong discontinuities, and
	clearly capture the relativistic wave configurations generated by the collision of the two strong relativistic blast waves.
	
	\begin{figure}[htbp]
		\setlength{\abovecaptionskip}{0.cm}
		\setlength{\belowcaptionskip}{-0.cm}
		\subfigure[$\rho$]{
			\begin{minipage}[t]{0.3\textwidth}
				\centering
				\includegraphics[width=\textwidth]{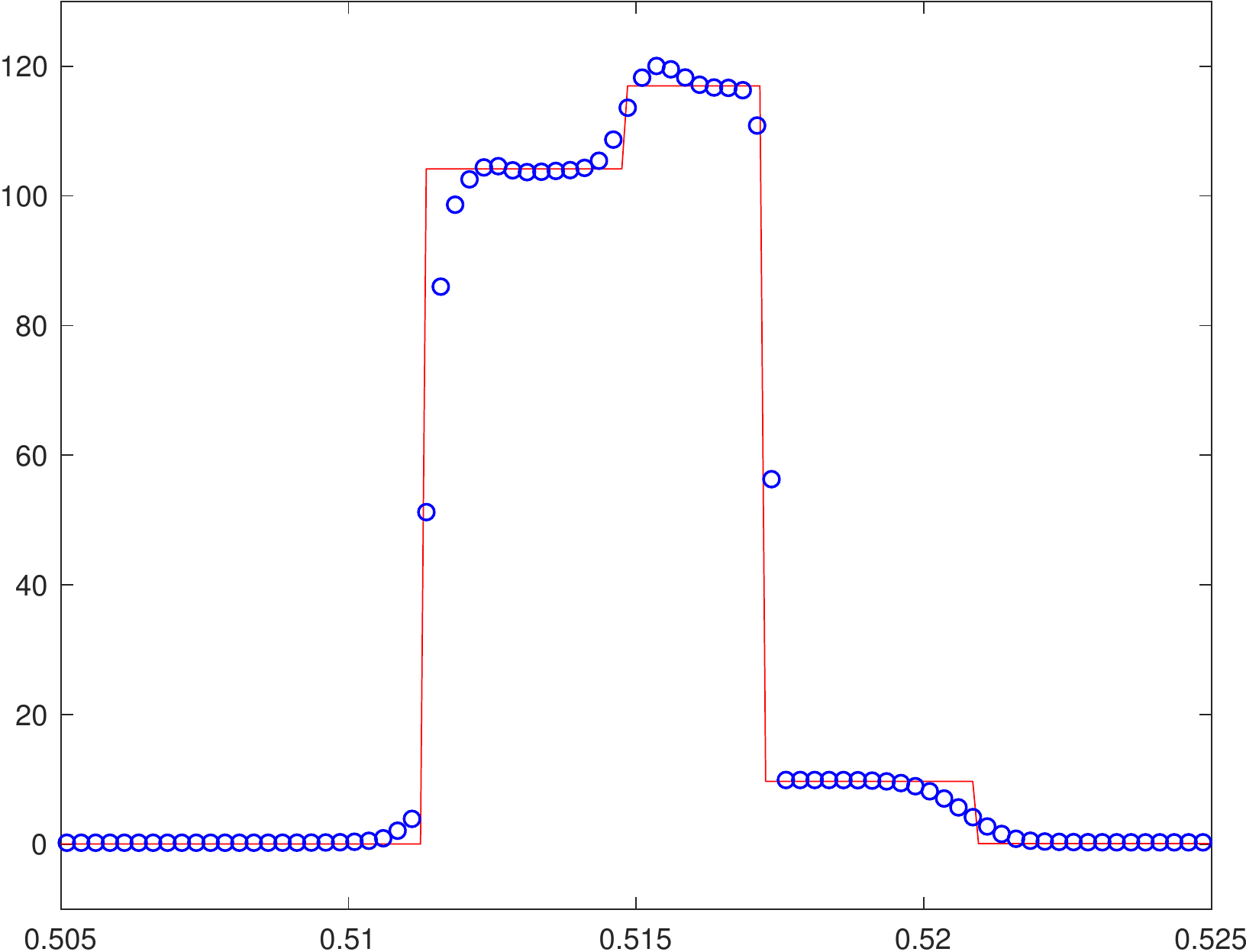}
			\end{minipage}
		}
		\subfigure[$u$]{
			\begin{minipage}[t]{0.3\textwidth}
				\centering
				\includegraphics[width=\textwidth]{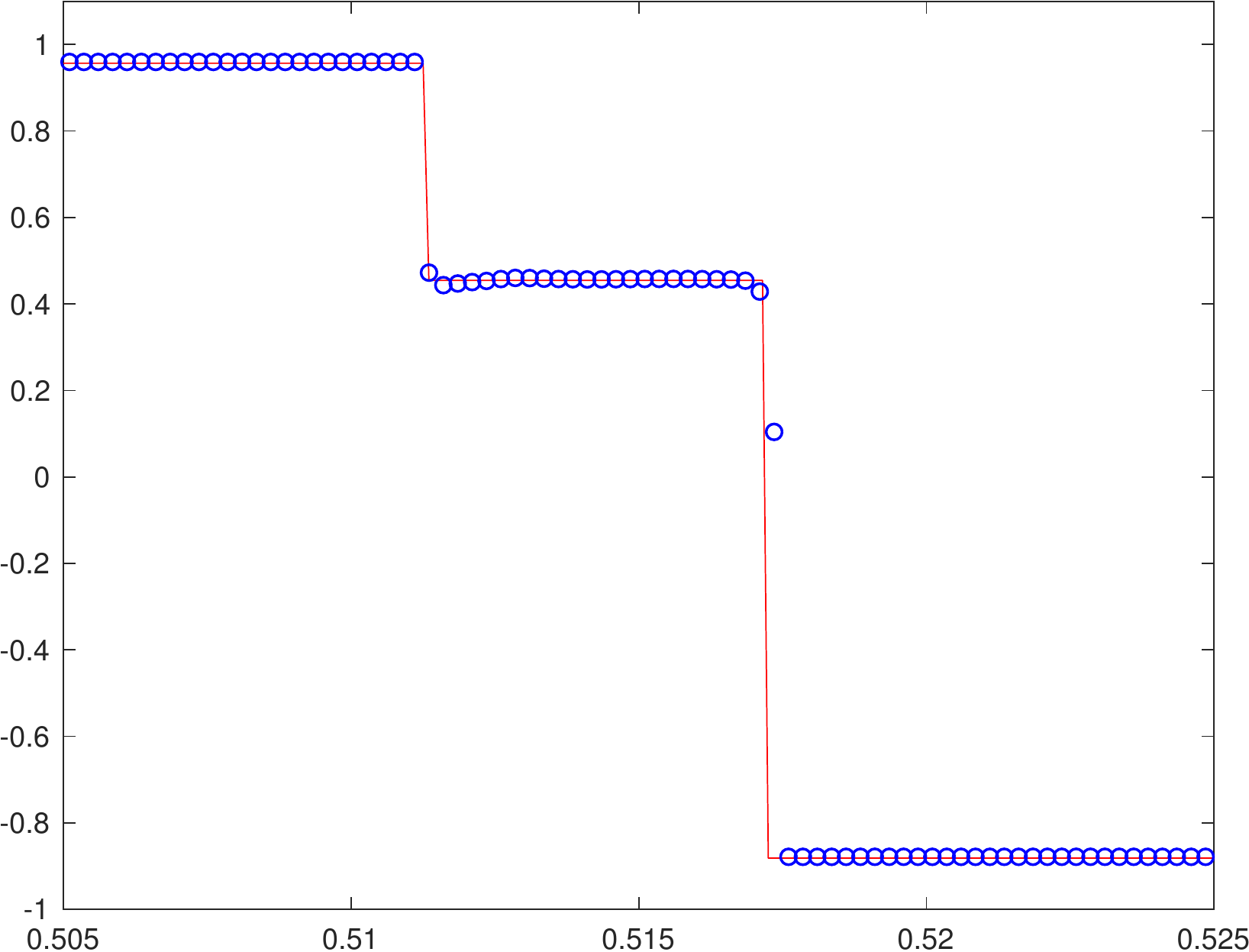}
			\end{minipage}
		}
		\subfigure[$p$]{
			\begin{minipage}[t]{0.3\textwidth}
				\centering
				\includegraphics[width=\textwidth]{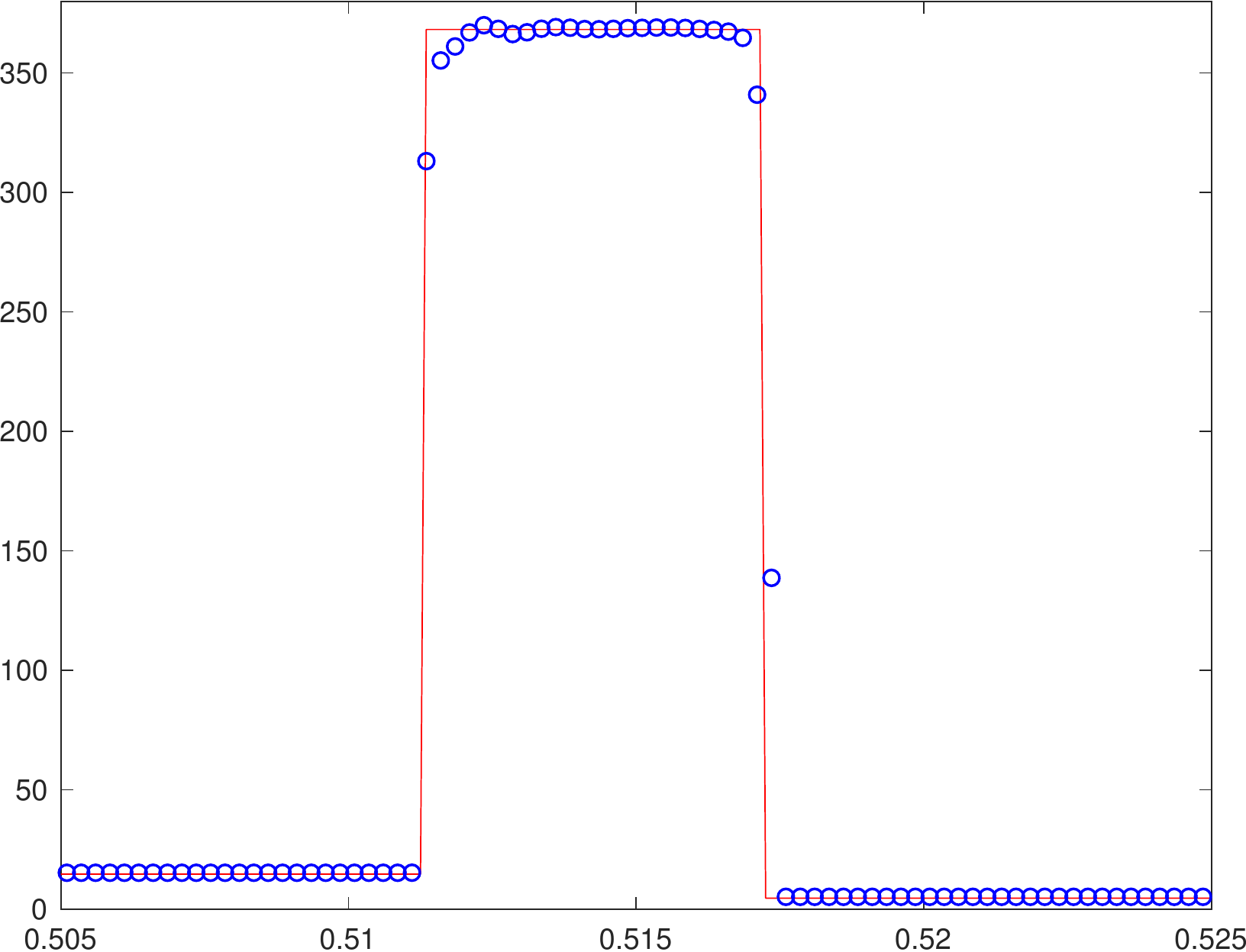}
			\end{minipage}
		}
		\caption{\small{Example \ref{example3.4}: Close-up of  the solutions  at $t = 0.43$. $\alpha=\frac13$.}}\label{fig:007}
	\end{figure}
	\begin{figure}[htbp]
		\setlength{\abovecaptionskip}{0.cm}
		\setlength{\belowcaptionskip}{-0.cm}
		\subfigure[$\rho$]{
			\begin{minipage}[t]{0.3\textwidth}
				\centering
				\includegraphics[width=\textwidth]{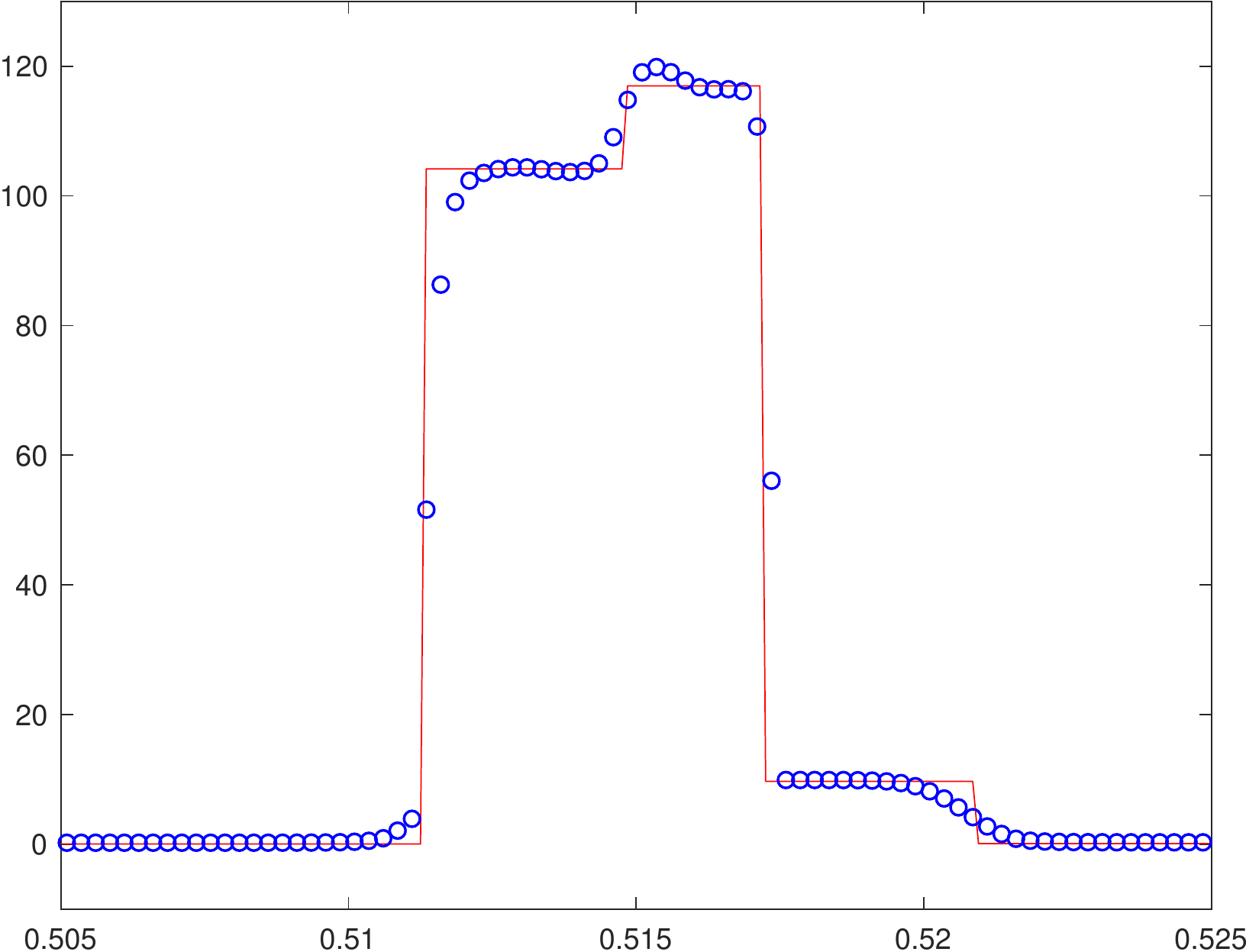}
			\end{minipage}
		}
		\subfigure[$u$]{
			\begin{minipage}[t]{0.3\textwidth}
				\centering
				\includegraphics[width=\textwidth]{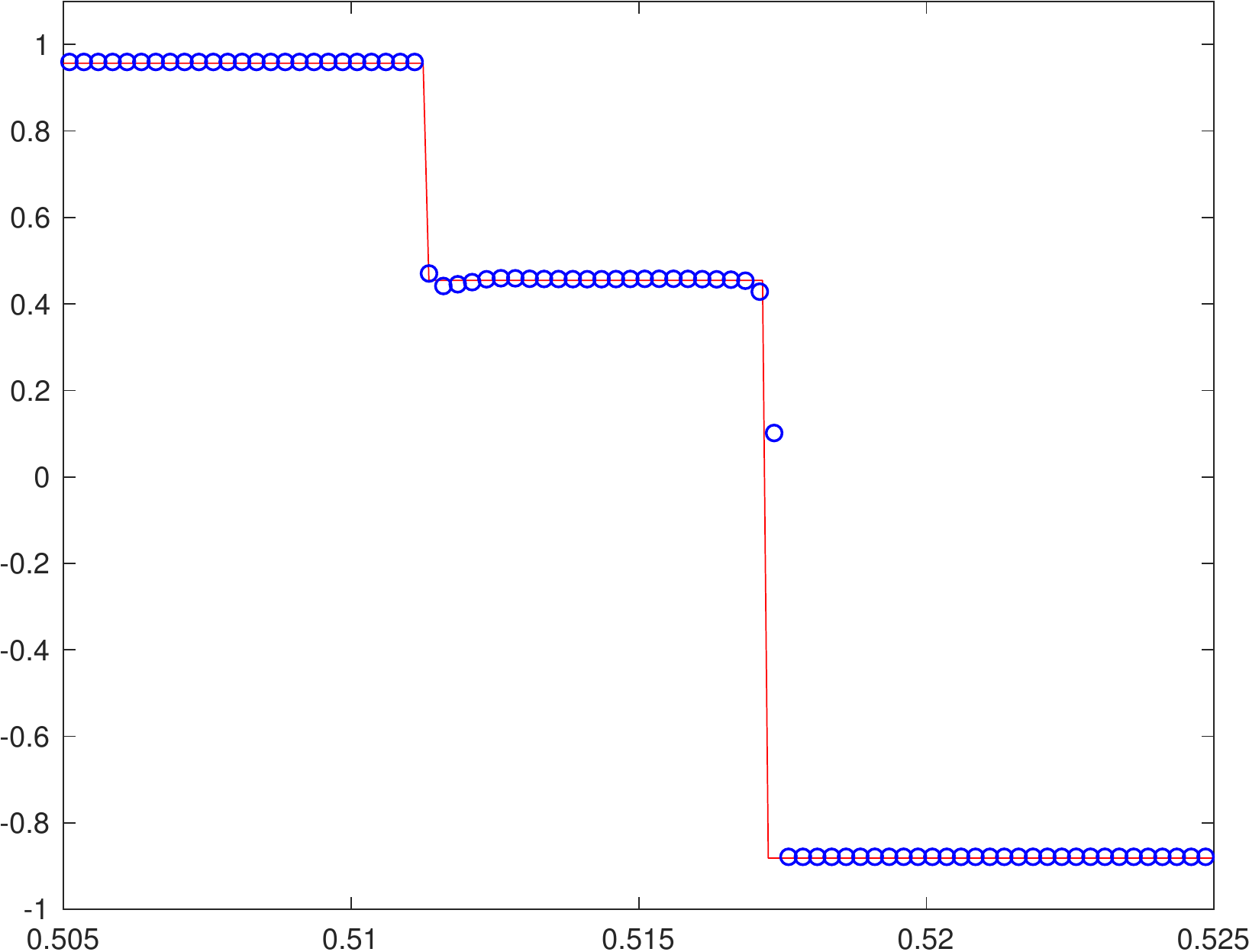}
			\end{minipage}
		}
		\subfigure[$p$]{
			\begin{minipage}[t]{0.3\textwidth}
				\centering
				\includegraphics[width=\textwidth]{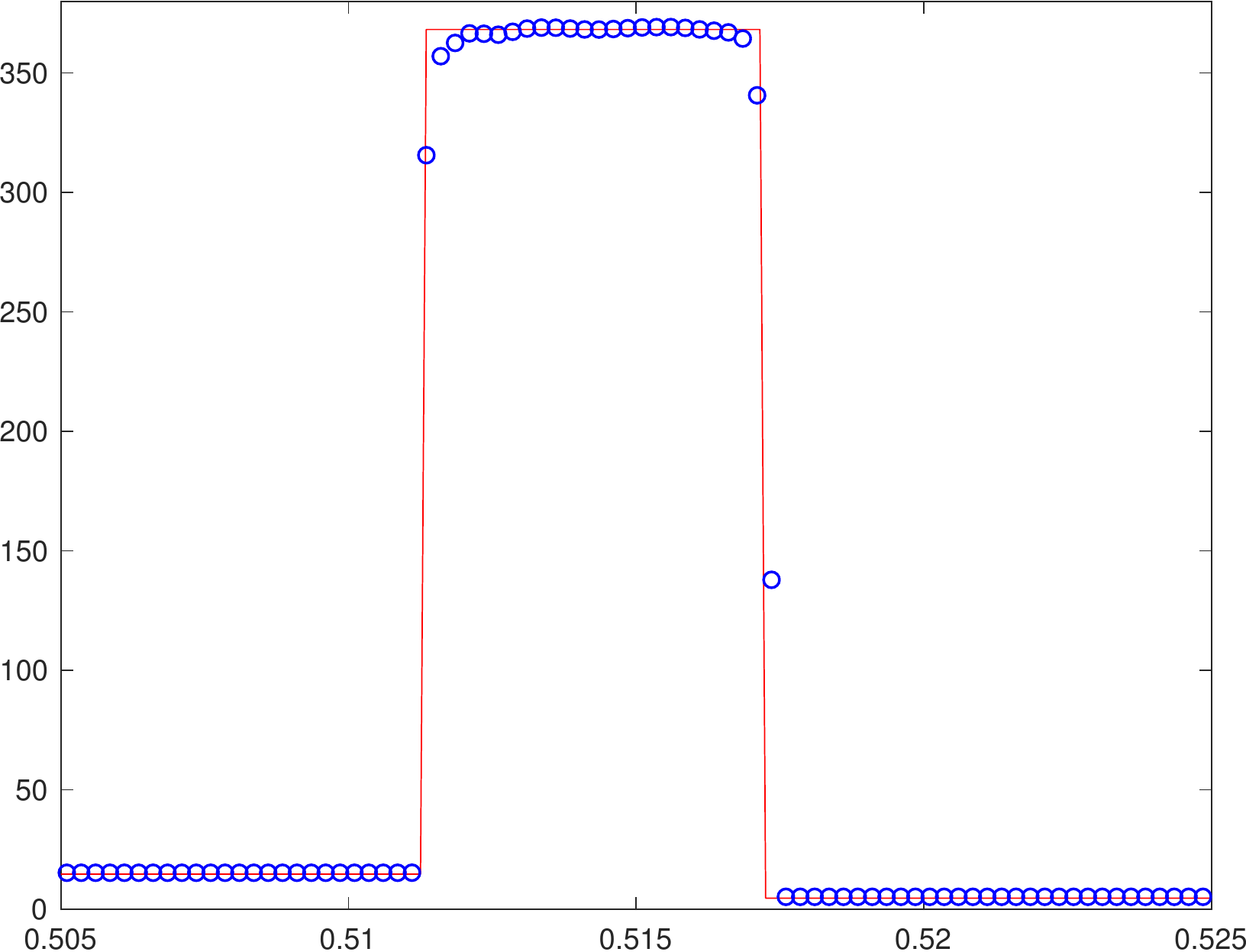}
			\end{minipage}
		}
		\caption{\small{Same as Fig. \ref{fig:007} except for $\alpha=\frac{1-6\tau}{3-6\tau}$.}}\label{fig:007a}
	\end{figure}
	\begin{figure}[htbp]
		\setlength{\abovecaptionskip}{0.cm}
		\setlength{\belowcaptionskip}{-0.cm}
		\subfigure[$\rho$]{
			\begin{minipage}[t]{0.3\textwidth}
				\centering
				\includegraphics[width=\textwidth]{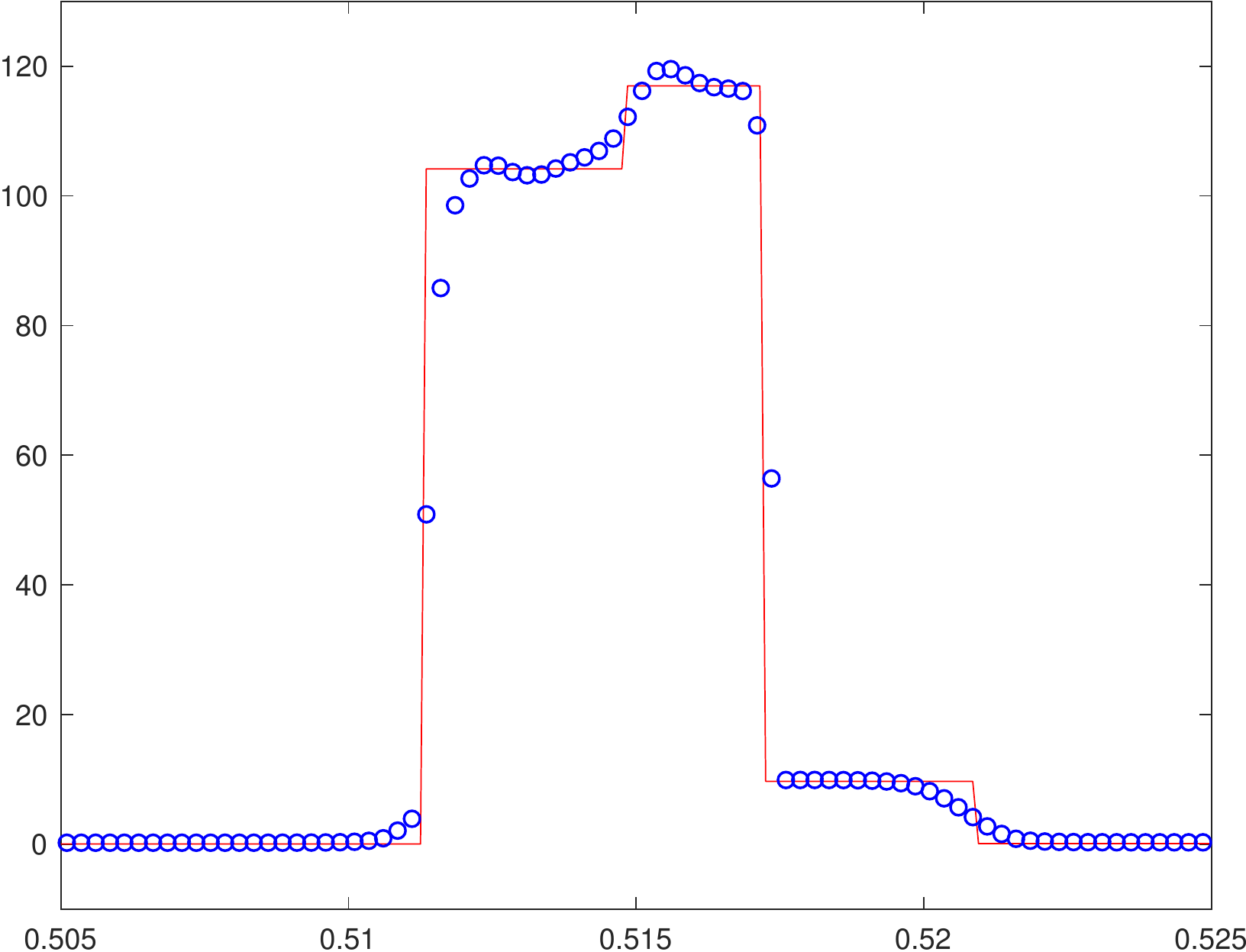}
			\end{minipage}
		}
		\subfigure[$u$]{
			\begin{minipage}[t]{0.3\textwidth}
				\centering
				\includegraphics[width=\textwidth]{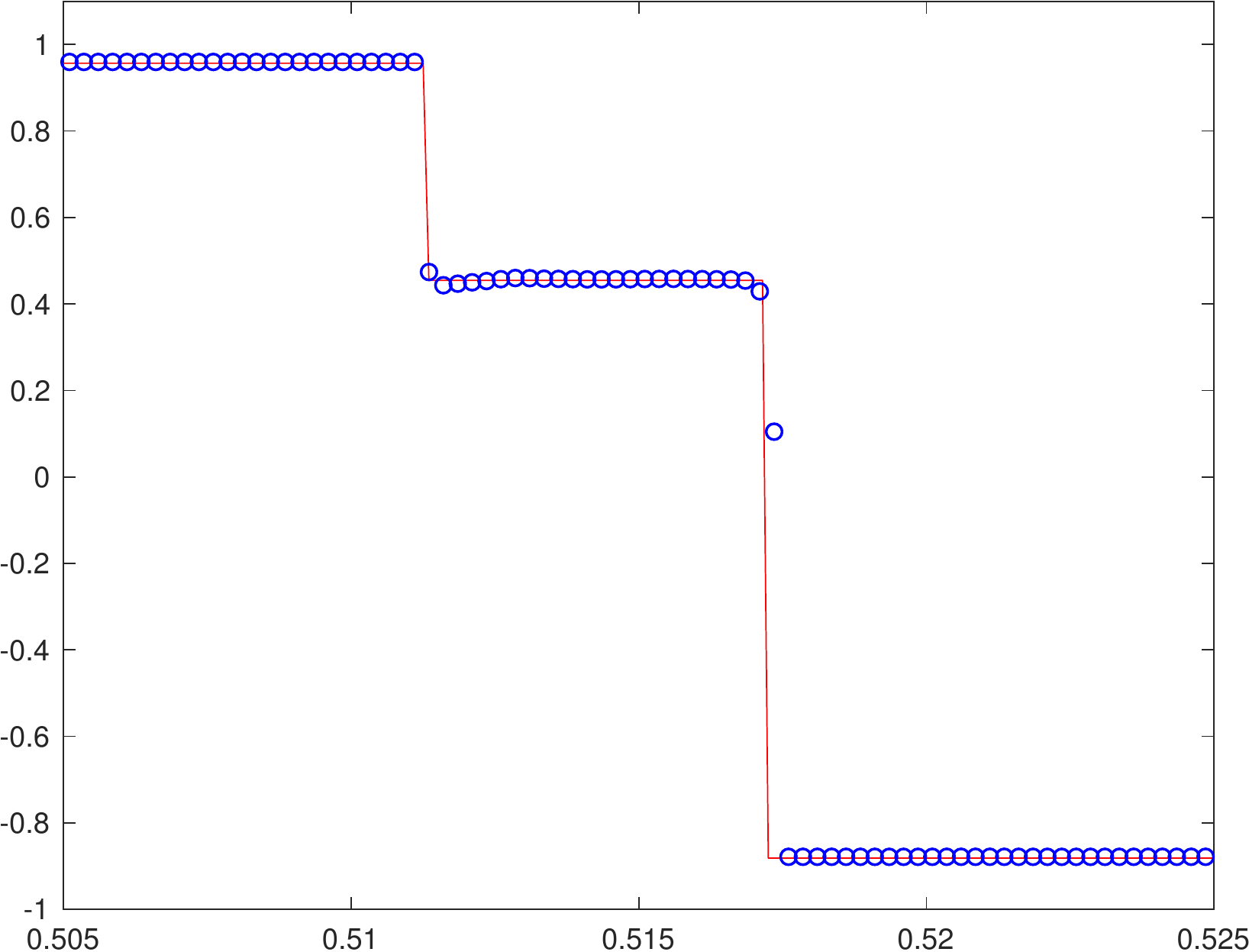}
			\end{minipage}
		}
		\subfigure[$p$]{
			\begin{minipage}[t]{0.3\textwidth}
				\centering
				\includegraphics[width=\textwidth]{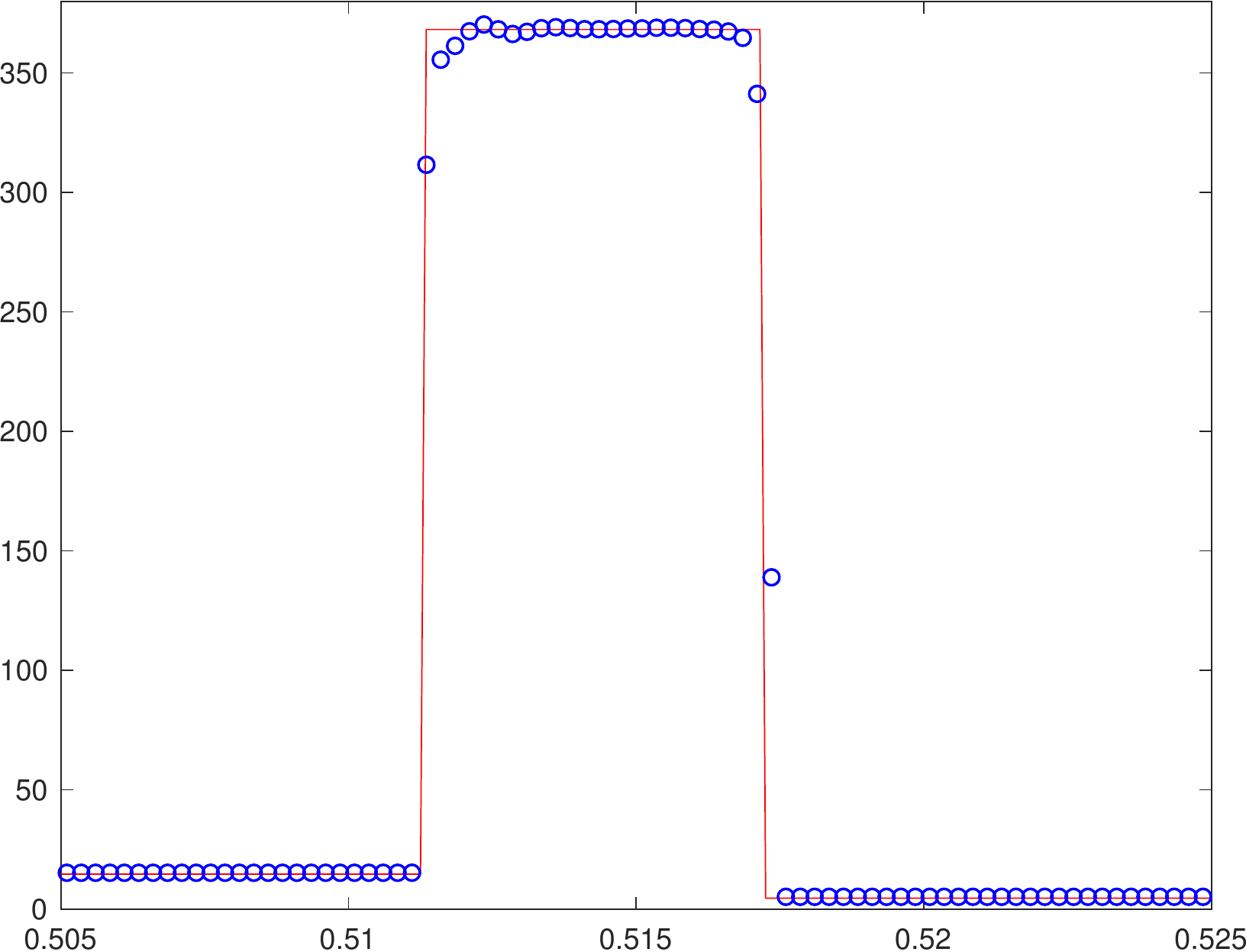}
			\end{minipage}
		}
		\caption{\small{Same as Fig. \ref{fig:007} except for $\alpha=\frac13 + \tau$. }}\label{fig:007b}
	\end{figure}
\end{example}

\subsection{Two-dimensional case}
Unless otherwise stated,  the adiabatic index $\G$ is taken as $5/3$
and the  parameter $\epsilon$ in the adaptive switch procedure is specified as $0.05$,
that is to say, $P_{\mbox{sw}}=1.05$.

\begin{example}[Smooth problem]\label{example3.5}\rm
	The problem considered here describes a RHD sine wave propagating periodically in the domain $\Om = [0,2/ \sqrt{3}]\times [0,2]$   at an angle of $\al = 30^{\circ}$ with the $x$-axis. The initial data are taken as
	\begin{equation*}
	\begin{cases}
	\rho(x,y,0) = 1+0.2\sin(2\pi (x\cos \al + y\sin \al)), \\
	u(x,y,0) = 0.2, \quad v(x,y,0) = 0.2,  \quad p(x,y,0) = 1.
	\end{cases}
	\end{equation*}
	The exact solution can be given {by}
	\begin{equation*}
	\begin{cases}
	\rho(x,y,t) = 1+0.2\sin(2\pi ((x-ut)\cos \al + (y-vt)\sin \al)), \\
	u(x,y,t) = 0.2, \quad v(x,y,t) = 0.2, \quad p(x,y,t) = 1.
	\end{cases}
	\end{equation*}
	{In our computations, $\tau = \frac{\mu}{
			\max_{\ell,j,k}\{   |\la_\ell^1 \left( \overline{\vec U}_{jk}^n \right) | \} /{\Delta x^{5/4}}
			+ \max_{\ell,j,k}\{   | \la_\ell^2\left( \overline{\vec U}_{jk}^n \right) | \} /{\Delta y^{5/4}} }$. }
	Tables \ref{table:003}$-$\ref{table:003b} list the errors and convergence
	rates in $\rho$ at $t = 2$ obtained by using our 2D scheme with
	$N\times N$ uniform cells and different $\alpha$.
	The results show that {our 2D two-stage schemes can have the theoretical orders}.

	\begin{table}[htpb]
		\centering
		\caption{ The errors and convergence rates for solution at $t = 2$.   $\alpha=\frac13$. } \label{table:003}	
		\begin{tabular}{ccccccc}
			\hline
			$N$ & $l^1$ error & order & $l^2$ error & order & $l^{\infty}$ error & order \\
			5    &8.8639e-02   & -    &6.4567e-02   & -    &6.1366e-02   & -\\
			10   &7.3103e-03   & 3.6000    &5.2506e-03   & 3.6202    &4.8578e-03   & 3.6591    \\
			20   &3.4830e-04   & 4.3915    &2.6427e-04   & 4.3124    &2.7917e-04   & 4.1211    \\
			40   &1.0722e-05   & 5.0217    &8.2909e-06   & 4.9943    &9.4647e-06   & 4.8824    \\
			80   &3.3578e-07   & 4.9969    &2.5428e-07   & 5.0271    &2.9210e-07   & 5.0180    \\
			160    &1.0576e-08   & 4.9887    &7.8638e-09   & 5.0150    &9.2428e-09   & 4.9820 \\
			\hline
		\end{tabular}
	\end{table}
	
	\begin{table}[htpb]
		\centering
		\caption{{Same as Table \ref{table:003} except for}
    $\alpha=\frac{1-6\tau}{3-6\tau}$.} \label{table:003a}	
		\begin{tabular}{ccccccc}
			\hline
			$N$ & $l^1$ error & order & $l^2$ error & order & $l^{\infty}$ error & order \\
			5    &8.8544e-02   & -    &6.4503e-02   & -    &6.1296e-02   & -\\
			10   &7.3094e-03   & 3.5986    &5.2467e-03   & 3.6199    &4.8472e-03   & 3.6606    \\
			20   &3.4800e-04   & 4.3926    &2.6414e-04   & 4.3120    &2.7981e-04   & 4.1146    \\
			40   &1.0709e-05   & 5.0222    &8.2864e-06   & 4.9944    &9.5006e-06   & 4.8803    \\
			80   &3.3590e-07   & 4.9946    &2.5419e-07   & 5.0268    &2.9340e-07   & 5.0171    \\
			160    &1.0589e-08   & 4.9875    &7.8733e-09   & 5.0128    &9.3046e-09   & 4.9788 \\
			\hline
		\end{tabular}
	\end{table}
	
	\begin{table}[htpb]
		\centering
		\caption{{Same as Table \ref{table:003} except for}  $\alpha=\frac13 + \tau$. } \label{table:003b}	
		\begin{tabular}{ccccccc}
			\hline
			$N$ & $l^1$ error & order & $l^2$ error & order & $l^{\infty}$ error & order \\
			5    &8.8719e-02   & -    &6.4621e-02   & -    &6.1424e-02   & -\\
			10   &7.3109e-03   & 3.6011    &5.2537e-03   & 3.6206    &4.8660e-03   & 3.6580    \\
			20   &3.4852e-04   & 4.3907    &2.6437e-04   & 4.3127    &2.7868e-04   & 4.1261    \\
			40   &1.0731e-05   & 5.0214    &8.2950e-06   & 4.9942    &9.4376e-06   & 4.8840    \\
			80   &3.3618e-07   & 4.9964    &2.5446e-07   & 5.0267    &2.9111e-07   & 5.0188    \\
			160    &1.0593e-08   & 4.9881    &7.8775e-09   & 5.0136    &9.1963e-09   & 4.9844 \\
			\hline
		\end{tabular}
	\end{table}
	
\end{example}

\begin{example}[Riemann problems]\label{example3.6}\rm
	This example solves three 2D Riemann problems to verify the capability of the 2D two-stage scheme in capturing the complex 2D relativistic wave configurations.
	The computational domain is taken as $[0,1]\times [0,1]$ and divided into $300 \times 300$ uniform cells.  {The output solutions at $t = 0.4$ will be plotted with $30$ equally spaced contour lines}.

	The initial data of {\tt RP1} are given {by}
	\begin{equation*}
	(\rho ,  u , v, p)(x,0)
	=  \begin{cases}
	(0.5 ,\, 0.5 ,\, -0.5 ,\, 0.5 ) , & x > 0.5,\,y>0.5, \\
	(1 ,\,  0.5,\, 0.5 , \, 5) , & x<0.5,\,y>0.5, \\
	(3 ,\,  -0.5,\, 0.5 , \, 5) , & x<0.5,\,y<0.5, \\
	(1.5 ,\,  -0.5,\, -0.5 , \, 5) , & x>0.5,\,y<0.5.
	\end{cases}
	\end{equation*}
	It describes the interaction of four contact discontinuities (vortex sheets) with the same sign (the negative sign).
	Fig. \ref{fig:RP1} shows the contour of the density   and   pressure logarithms.
	The results show that the four initial
	vortex sheets interact each other to form a spiral with the low density around the center of the domain as time increases,
	which is the typical cavitation phenomenon in gas dynamics.
\end{example}

\begin{figure}[htbp]
	\centering
	\includegraphics[width=0.36\textwidth]{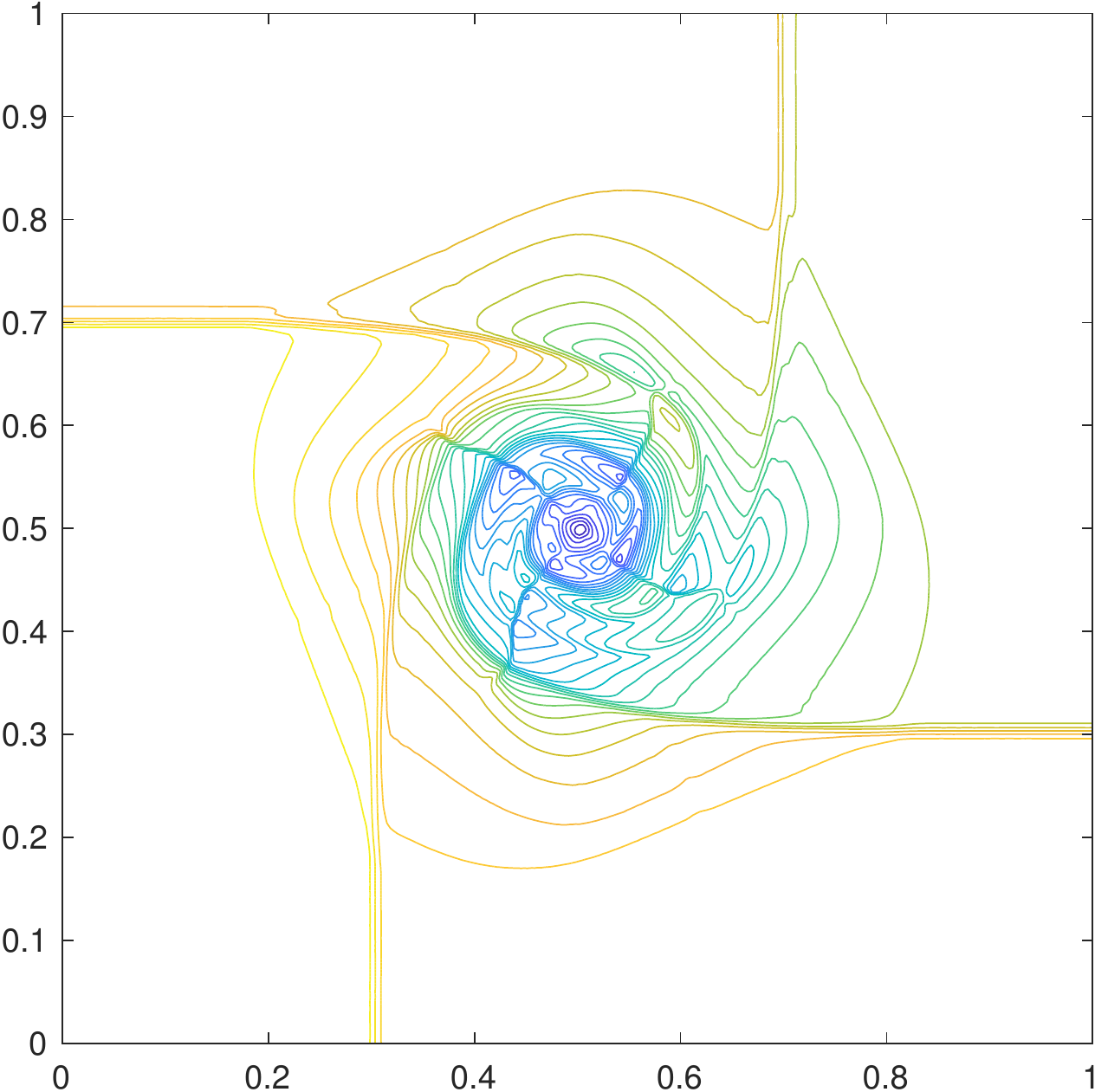}
	\includegraphics[width=0.36\textwidth]{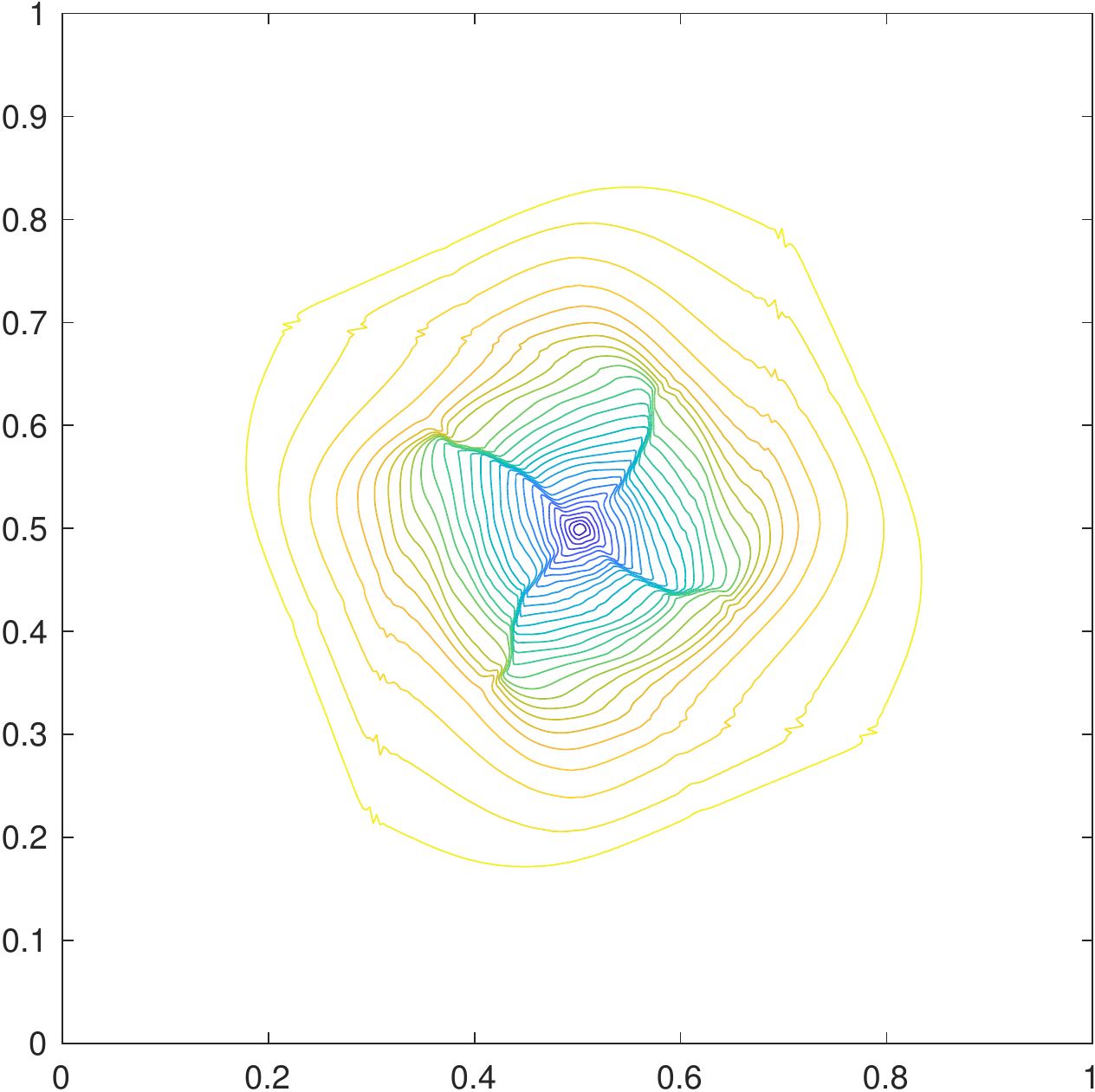}
	\caption{\small{{\tt RP1} of Example \ref{example3.6}:  Left:  $\log \rho$;
			right:   $\log p$. }} \label{fig:RP1}
\end{figure}


The initial data of {\tt RP2} are given {by}
\begin{equation*}
(\rho ,  u , v, p)(x,0)
\; = \; \begin{cases}
(1 ,\, 0 ,\, 0 ,\, 1 ) , & x > 0.5,\,y>0.5, \\
(0.5771 ,\,  -0.3529,\, 0 , \, 0.4) , & x<0.5,\,y>0.5,\\
(1 ,\,  -0.3529,\, -0.3529 , \, 	1) , & x<0.5,\,y<0.5,\\
(0.5771 ,\,  0,\, -0.3529 , \, 0.4) , & x>0.5,\,y<0.5.
\end{cases}
\end{equation*}
Fig. \ref{fig:RP2} shows the contour of the density  { and   pressure logarithms.
The results show }   that those four initial discontinuities first evolve as four rarefaction waves
and then interact each other and form two (almost parallel) curved shock waves perpendicular to the line $x = y$ as time
increases.

\begin{figure}[htbp]
	\centering
	\includegraphics[width=0.36\textwidth]{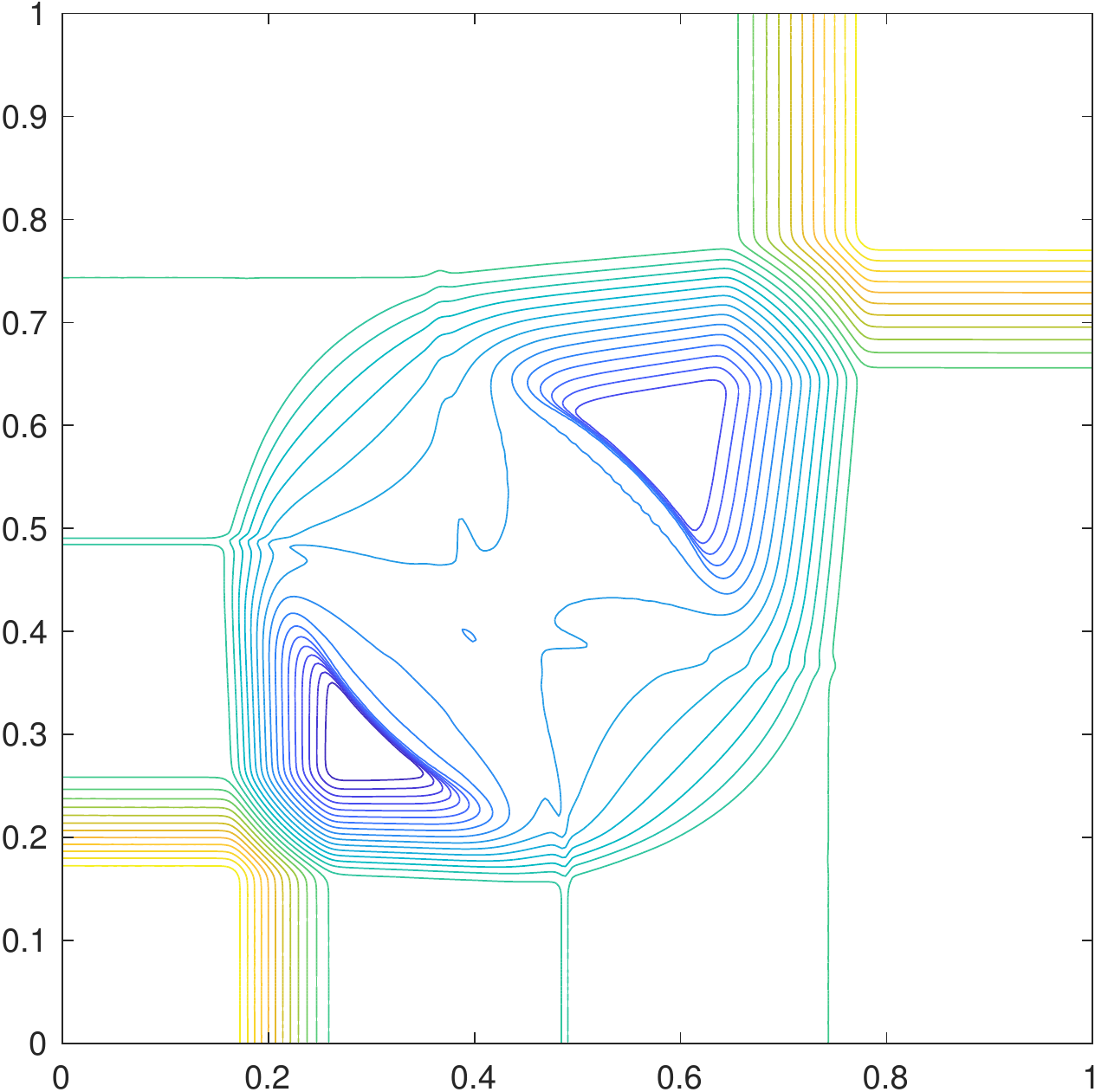}
	\includegraphics[width=0.36\textwidth]{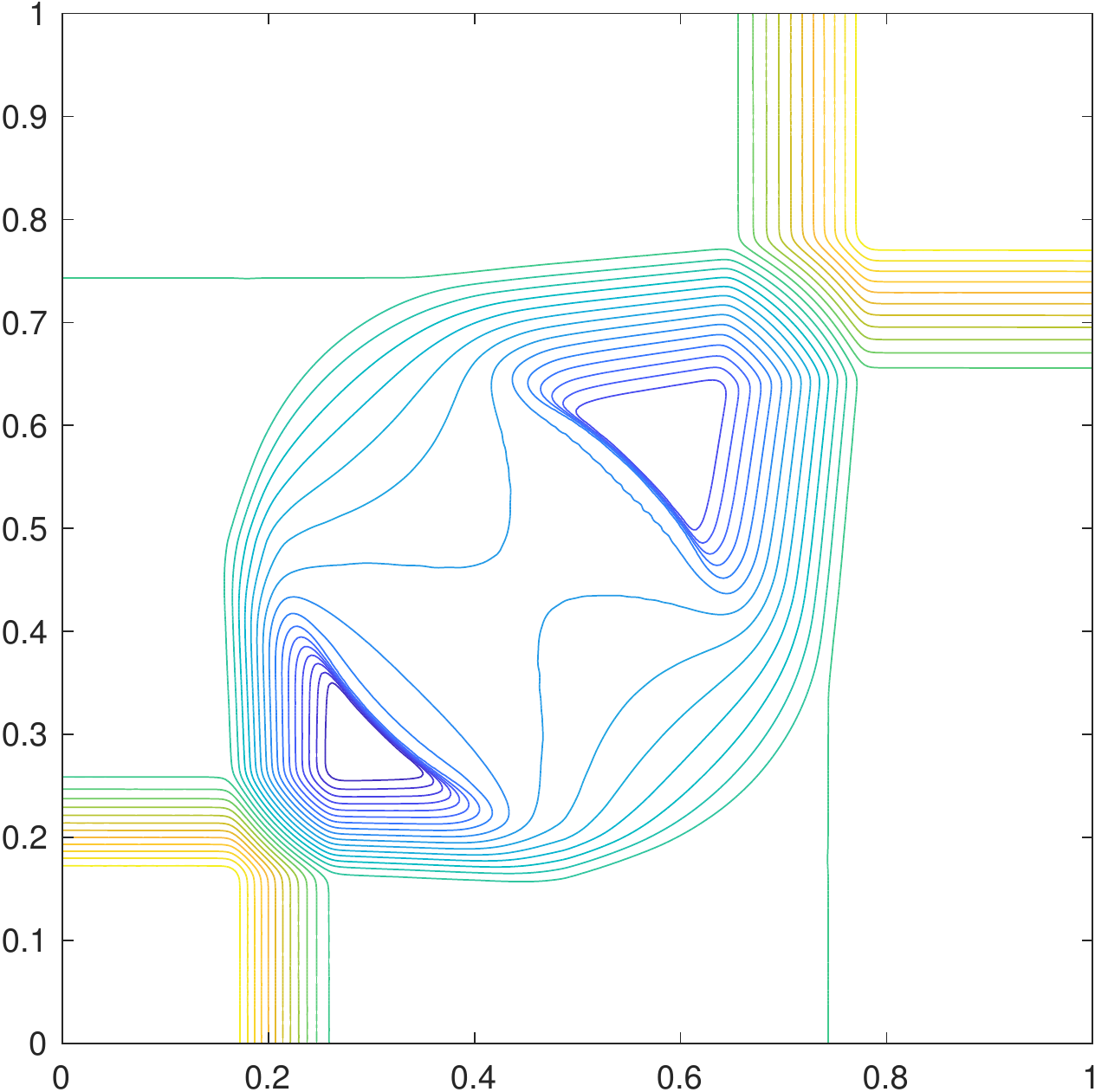}
	\caption{\small{  {\tt RP2} of Example \ref{example3.6}: Left:    { $\log \rho$; right:
			$\log p$.}  }} \label{fig:RP2}
\end{figure}


The initial data of {\tt RP3} are given {by}
\begin{equation*}
(\rho ,  u , v, p)(x,0)
\; = \; \begin{cases}
(0.035145216124503 ,\, 0 ,\, 0 ,\, 0.162931056509027 ) , & x > 0.5,\,y>0.5, \\
(0.1 ,\,  0.7,\, 0.0 , \, 1.0) , & x<0.5,\,y>0.5, \\
(0.5 ,\,  0.0,\, 0.0 , \, 1.0) , & x<0.5,\,y<0.5, \\
(0.1 ,\,  0.0,\, 0.7 , \, 1.0) , & x>0.5,\,y<0.5,
\end{cases}
\end{equation*}
where the left and bottom discontinuities are two contact discontinuities and the top and right are two shock waves with the speed of $0.9345632754$.

Fig. \ref{fig:RP3} shows the contour of the density  { and   pressure logarithms}. We see that four initial discontinuities interact each
other and form a ``mushroom cloud'' around the point $(0.5, 0.5)$ as $t$ increases.

\begin{figure}[htbp]
	\centering
	\includegraphics[width=0.36\textwidth]{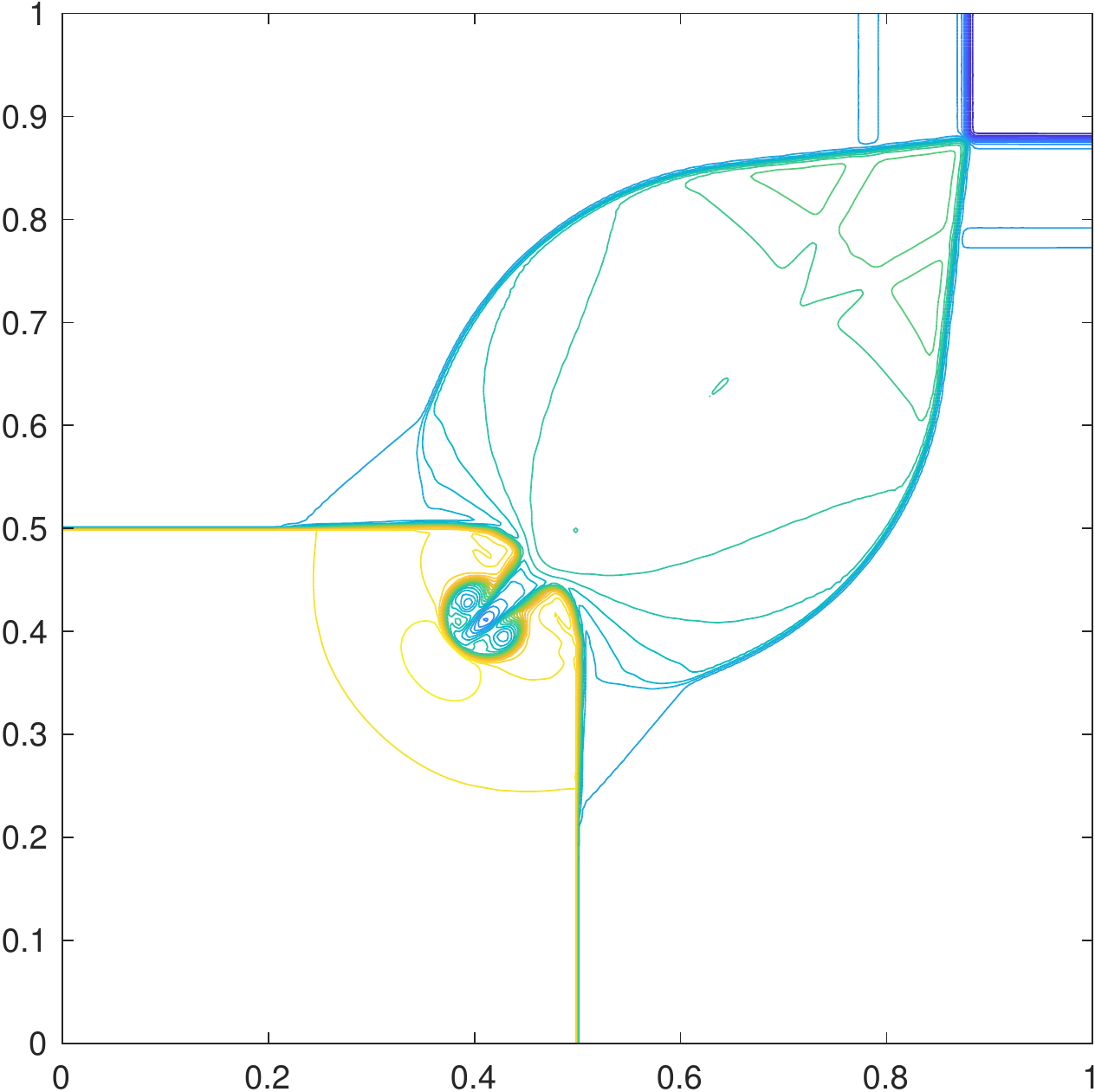}
	\includegraphics[width=0.36\textwidth]{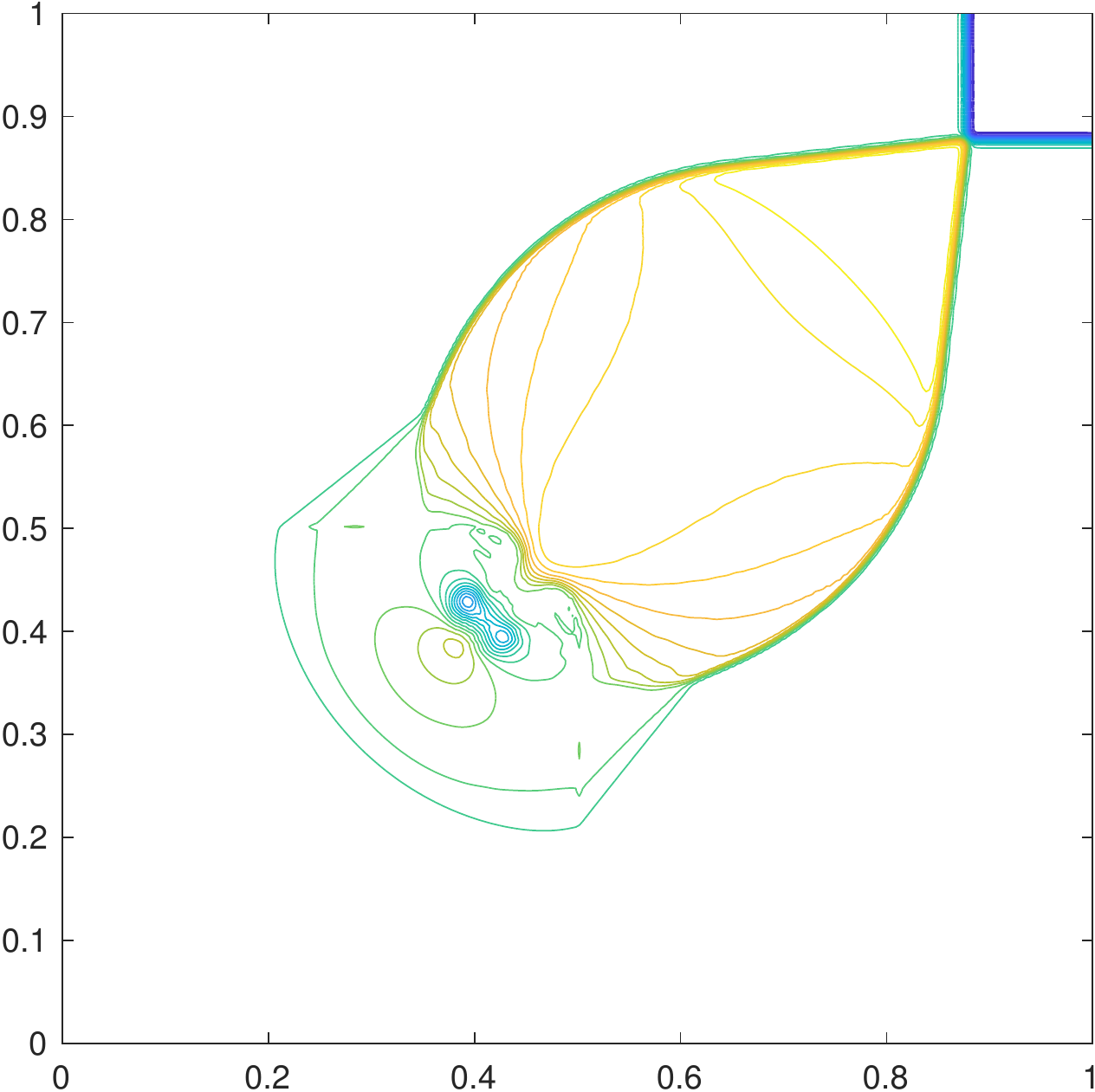}
	\caption{\small{{\tt RP3} of Example \ref{example3.6}: Left:  { $\log \rho$; right:
			$\log p$.}}} \label{fig:RP3}
\end{figure}


\begin{example}[Double Mach reflection problem]\label{example3.9}\rm
	The double Mach reflection problem for the ideal relativistic fluid with the adiabatic index $\Gamma = 1.4$ within the domain $\Omega = [0, 4]\times[0, 1]$ has been
	widely used to test the high-resolution shock-capturing scheme, see e.g. \cite{HeAdaptiveRHD,WuEGRHD,Yang-Tang:2012}.
	Initially, a right-moving oblique shock with speed $v_s = 0.4984$
	is located at $(x, y) = (1/6, 0)$ and makes a $60^\circ$ angle with $x$-axis.
	Thus its position at time $t$ may be given by
	$h(x,t) = \sqrt{3}(x-1/6) - 2v_s t$.
	The left and right states of the shock wave for the primitive variables  are given by
	\begin{equation*}
	\vec V(x,y,0)
	\; = \; \begin{cases}
	\vec V_L , & y > h(x,0), \\
	\vec V_R , & y < h(x,0),
	\end{cases}
	\end{equation*}
	with $\vec V_L = (8.564,0.4247\sin(\pi/3),-0.4247\cos(\pi/3),0.3808)^T$ and
	$\vec V_R = (1.4, 0.0,0.0,0.0025)^T$.
	The  setup of boundary conditions can be found in \cite{HeAdaptiveRHD,WuEGRHD,Yang-Tang:2012}.
	Figs. \ref{fig:RP4}$-$\ref{fig:RP4b} give
	the contours of the density and pressure at time $t = 5.5$ with $640 \times 160$ uniform cells and different $\alpha$.  We see that  the complicated structure around the double Mach region can be clearly captured.
	
	\begin{figure}[htbp]
		\setlength{\abovecaptionskip}{0.cm}
		\setlength{\belowcaptionskip}{-0.cm}
		\subfigure{
			\begin{minipage}[t]{\textwidth}
				\centering
				\includegraphics[width=0.8\textwidth]{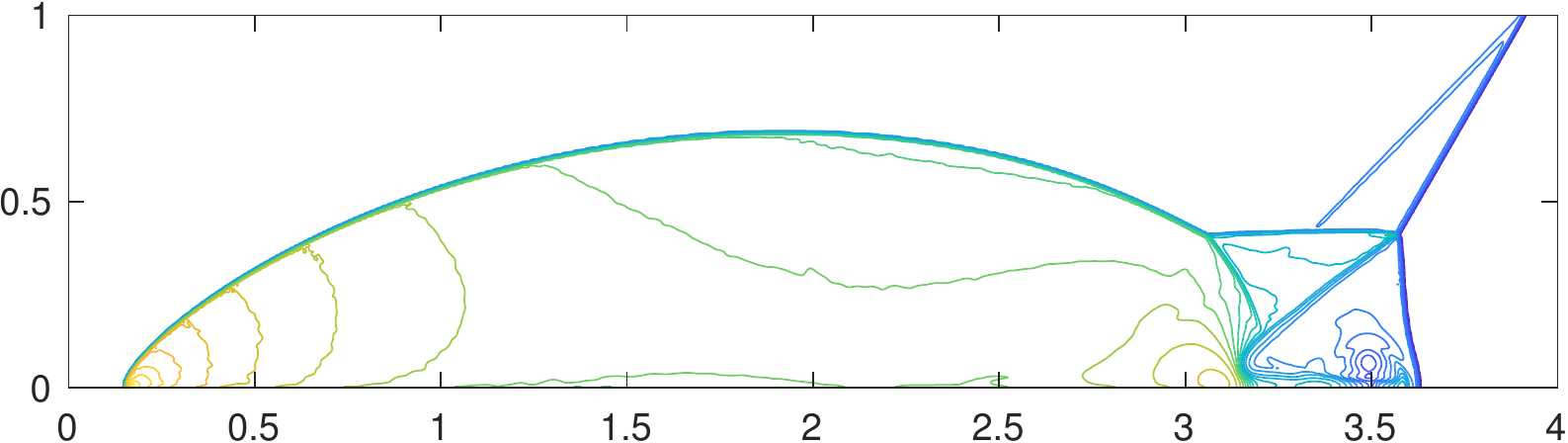}
			\end{minipage}
		}
		\subfigure{
			\begin{minipage}[t]{\textwidth}
				\centering
				\includegraphics[width=0.8\textwidth]{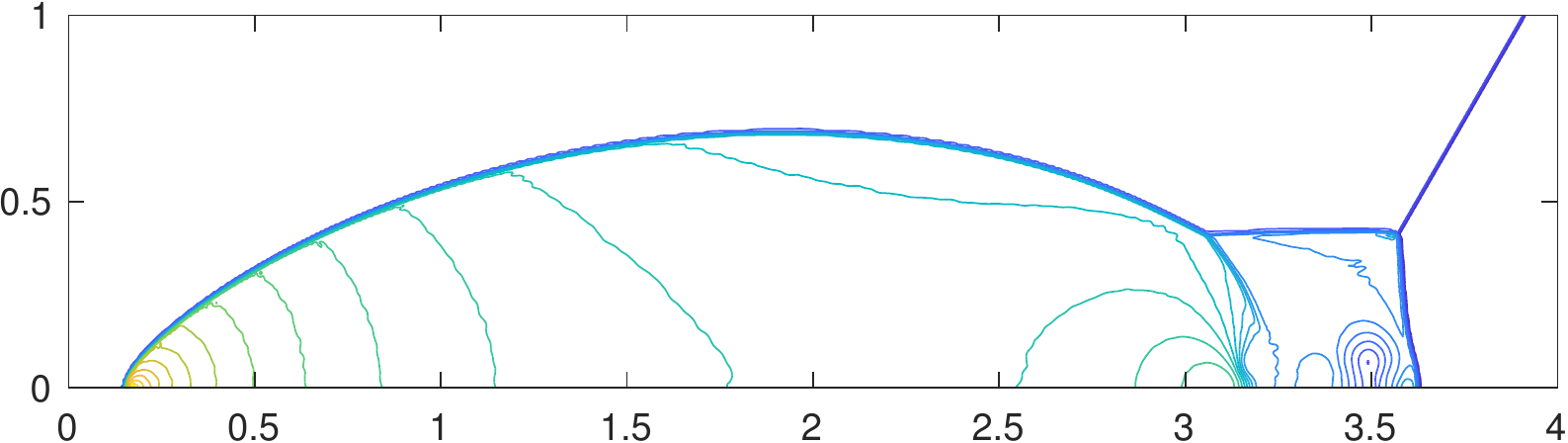}
			\end{minipage}
		}
		\caption{\small{Example \ref{example3.9}: {the contours of $\rho$ (top) and   $p$ (bottom)} with $30$ equally spaced contour lines. $\alpha=\frac13$.}} \label{fig:RP4}
	\end{figure}
	
	\begin{figure}[htbp]
		\setlength{\abovecaptionskip}{0.cm}
		\setlength{\belowcaptionskip}{-0.cm}
		\subfigure{
			\begin{minipage}[t]{\textwidth}
				\centering
				\includegraphics[width=0.8\textwidth]{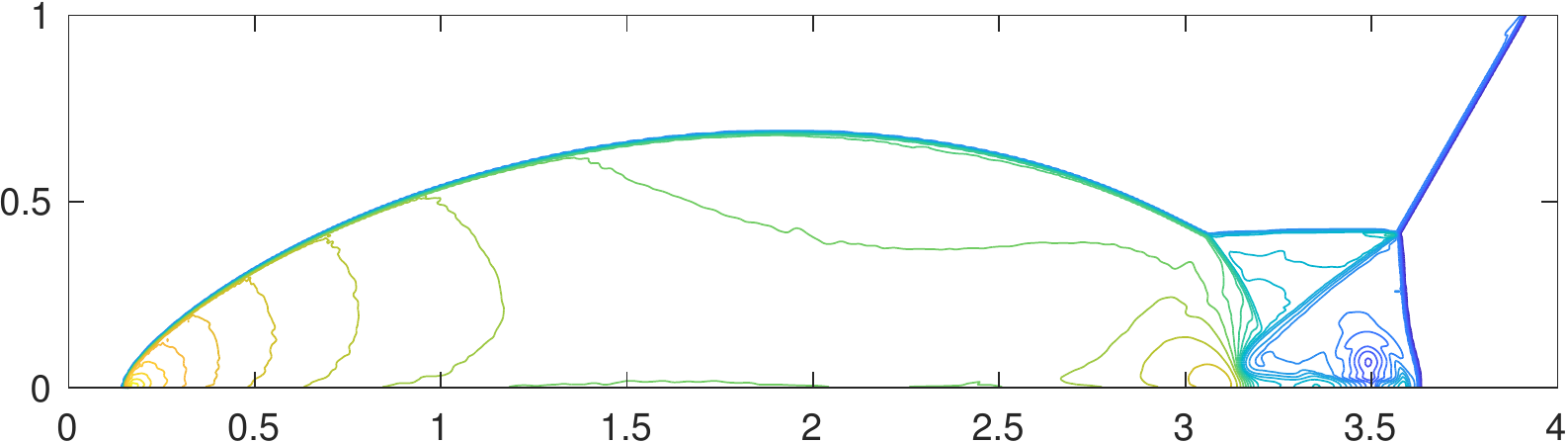}
			\end{minipage}
		}
		\subfigure{
			\begin{minipage}[t]{\textwidth}
				\centering
				\includegraphics[width=0.8\textwidth]{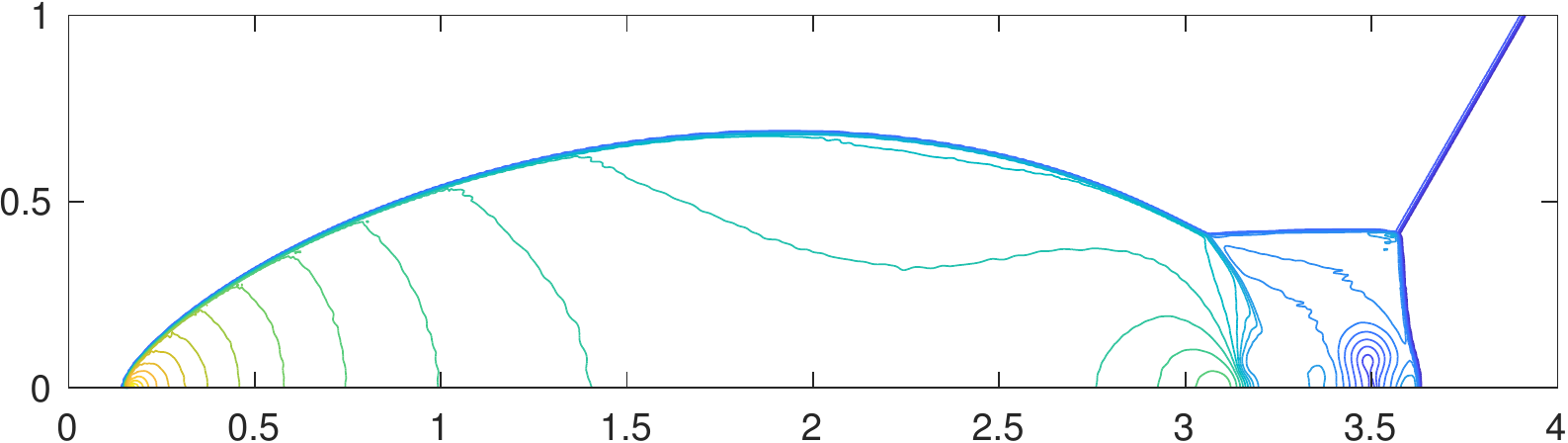}
			\end{minipage}
		}
		\caption{\small Same as Fig. \ref{fig:RP4} except for $\alpha=\frac{1-6\tau}{3-6\tau}$.} \label{fig:RP4a}
	\end{figure}
	
	\begin{figure}[htbp]
		\setlength{\abovecaptionskip}{0.cm}
		\setlength{\belowcaptionskip}{-0.cm}
		\subfigure{
			\begin{minipage}[t]{\textwidth}
				\centering
				\includegraphics[width=0.8\textwidth]{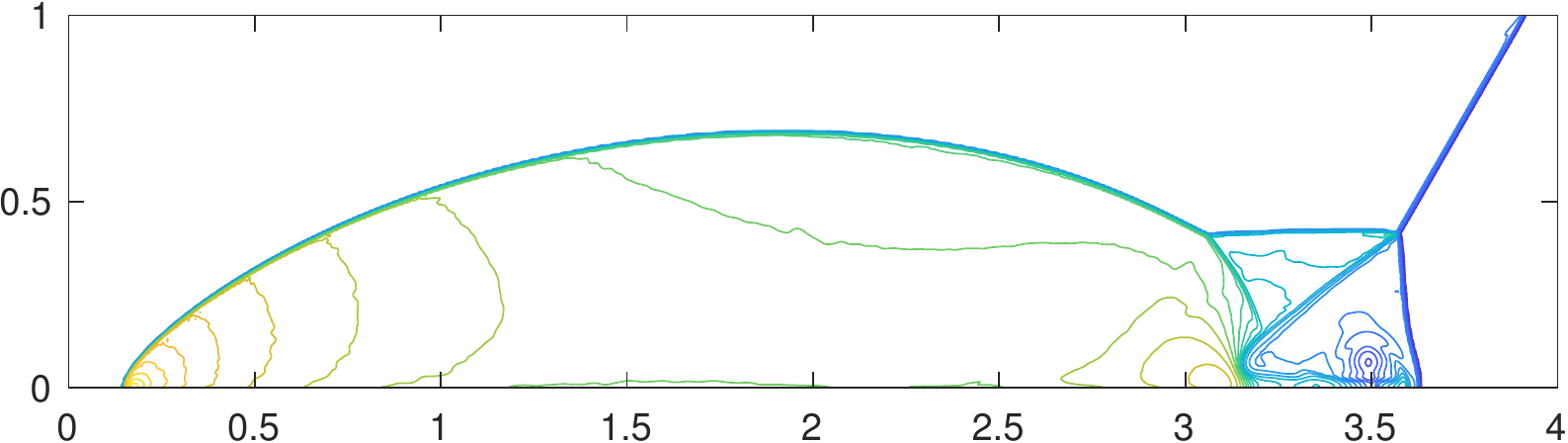}
			\end{minipage}
		}
		\subfigure{
			\begin{minipage}[t]{\textwidth}
				\centering
				\includegraphics[width=0.8\textwidth]{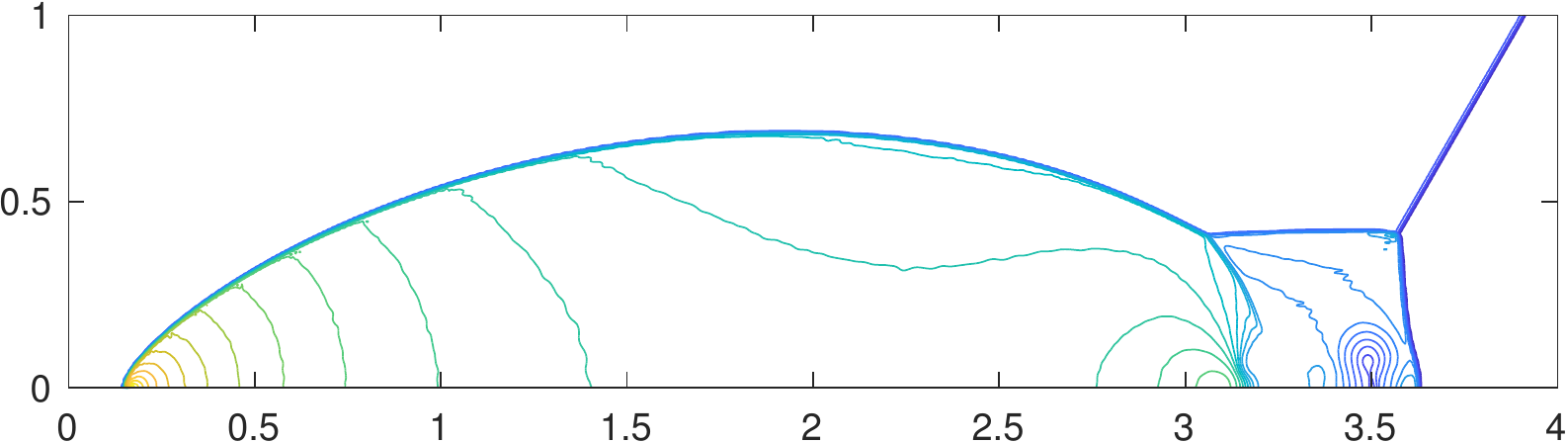}
			\end{minipage}
		}
		\caption{\small Same as Fig. \ref{fig:RP4} except for $\alpha=\frac13 + \tau$.} \label{fig:RP4b}
	\end{figure}
	
\end{example}


\begin{example}[Shock-bubble interaction problems]\label{example3.10}\rm
	The final example considers two shock-bubble interaction problems {within} the
	computational domain $[0,325]\times [0,90]$.
	Their detailed setup can be found in  \cite{WuEGRHD}.
	
	For the first shock-bubble interaction problem,
	the left and right states of planar shock wave moving left are given
	{by}
	\begin{equation*}
	\vec V(x,y,0)
	= \begin{cases}
	(1,0,0,0.05)^T , & x < 265, \\
	(1.865225080631180,-0.196781107378299,0,0.15)^T , & x > 265,
	\end{cases}
	\end{equation*}
	and the bubble is described as
	$\vec V(x,y,0) = (0.1358,0,0,0.05)^T$ if $\sqrt{(x-215)^2+(y-45)^2} \leq 25$.
	The setup of the second shock-bubble problem is the same as the first,
	except for that the initial state of the fluid in the bubble is replaced with
	$
	\vec V(x,y,0) = (3.1538,0,0,0.05)^T$ { if } $\sqrt{(x-215)^2+(y-45)^2} \leq 25$.
	
	Fig. \ref{fig:RP5} gives the contour plots of the density  at $t = 90, 180, 270, 360, 450$ {(from top to bottom)} of the first shock-bubble interaction problem,
	obtained by using our  scheme with $325 \times 90$ uniform cells.
	Fig. \ref{fig:RP6} presents  the contour plots  of the density  at
	several moments $t = 100, 200, 300, 400, 500$ (from top to bottom)
	of the second shock-bubble interaction problem,
	obtained by using our 2D two-stage scheme with $325 \times 90$ uniform cells.
	Those results show that
	the discontinuities and some small wave structures including the
	curling of the bubble interface are captured well and accurately, and at the same time, the multi-dimensional wave structures are also resolved clearly.
	Those plots are also clearly displaying
	the dynamics of the interaction between the shock wave and the bubble
	and obviously different wave patterns of the {interactions
	between those shock waves and the bubbles}.
\end{example}


\begin{figure}[htbp]
	\setlength{\abovecaptionskip}{0.cm}
	\setlength{\belowcaptionskip}{-0.cm}
	\subfigure{
		\begin{minipage}[t]{\textwidth}
			\centering
			\includegraphics[width=0.8\textwidth]{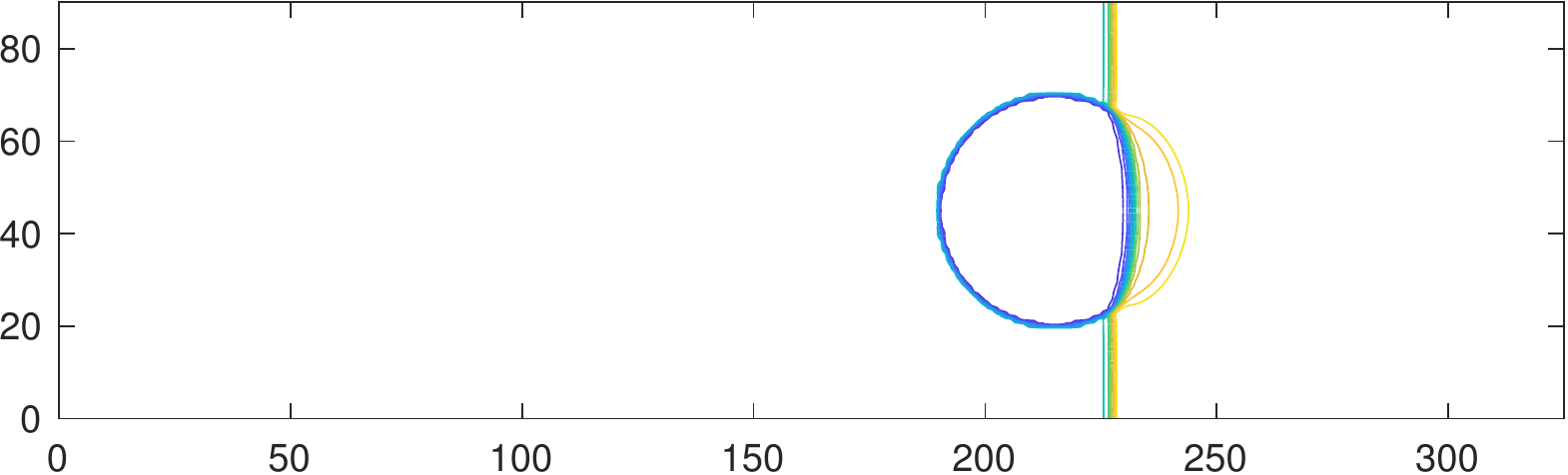}
		\end{minipage}
	}
	\subfigure{
		\begin{minipage}[t]{\textwidth}
			\centering
			\includegraphics[width=0.8\textwidth]{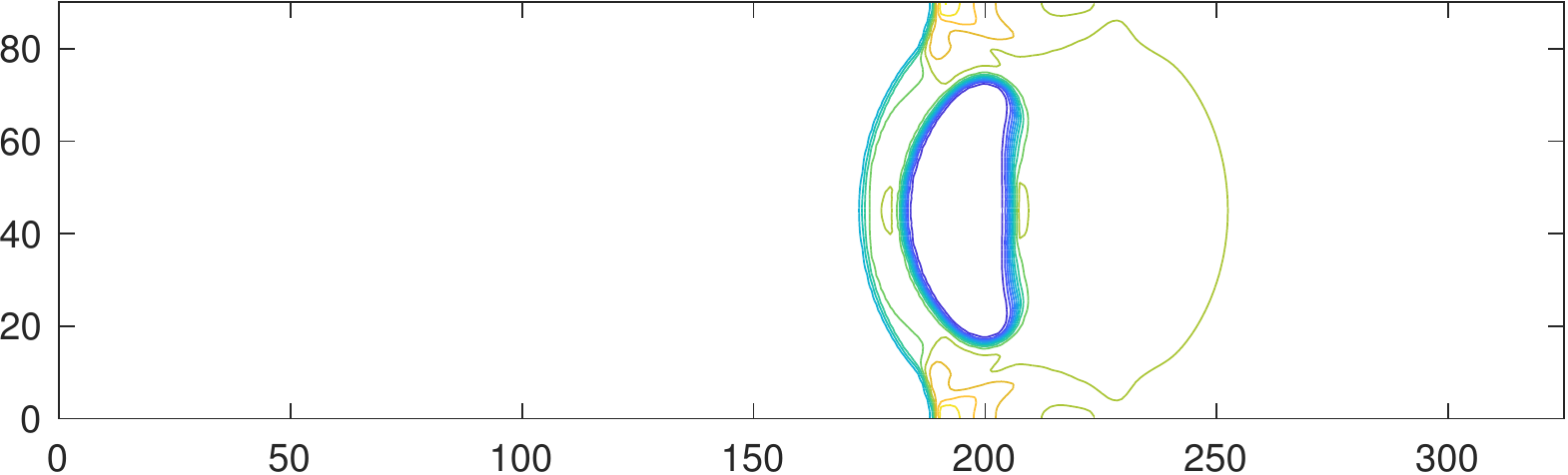}
		\end{minipage}
	}
	\subfigure{
		\begin{minipage}[t]{\textwidth}
			\centering
			\includegraphics[width=0.8\textwidth]{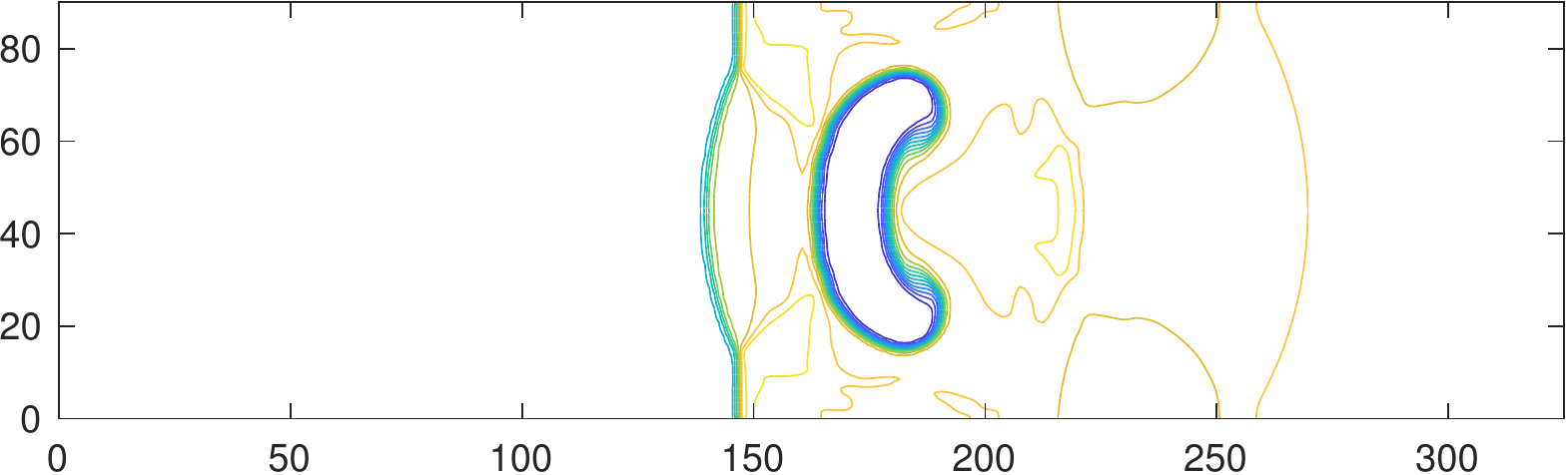}	
		\end{minipage}
	}
	\subfigure{
		\begin{minipage}[t]{\textwidth}
			\centering
			\includegraphics[width=0.8\textwidth]{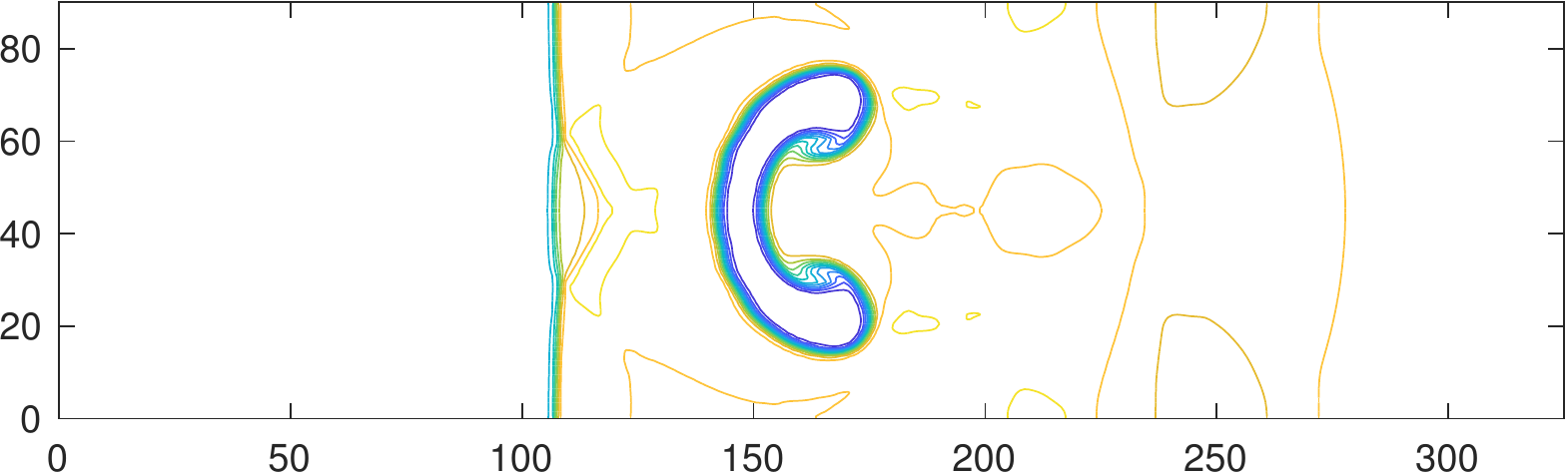}	
		\end{minipage}
	}
	\subfigure{
		\begin{minipage}[t]{\textwidth}
			\centering
			\includegraphics[width=0.8\textwidth]{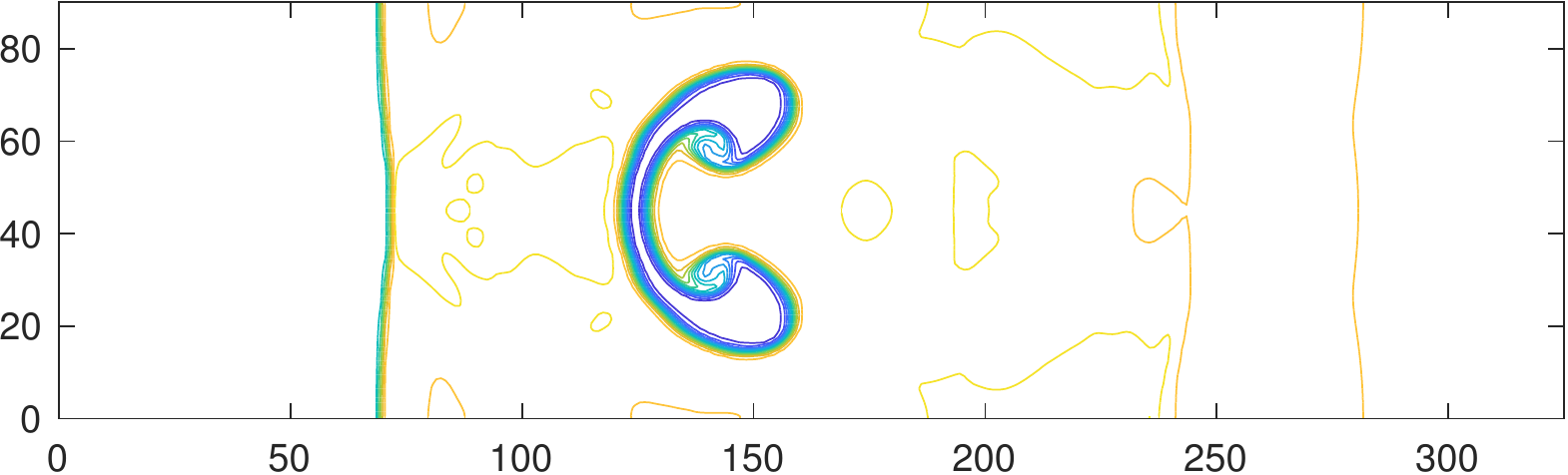}
		\end{minipage}
	}
	\caption{\small{The first problem of Example \ref{example3.10}: the contours of $ \rho$ {at $t = 90,180,270,360,450$} with $15$ equally spaced contour lines.}} \label{fig:RP5}
\end{figure}


\begin{figure}[htbp]
	\setlength{\abovecaptionskip}{0.cm}
	\setlength{\belowcaptionskip}{-0.cm}
	\subfigure{
		\begin{minipage}[t]{\textwidth}
			\centering
			\includegraphics[width=0.8\textwidth]{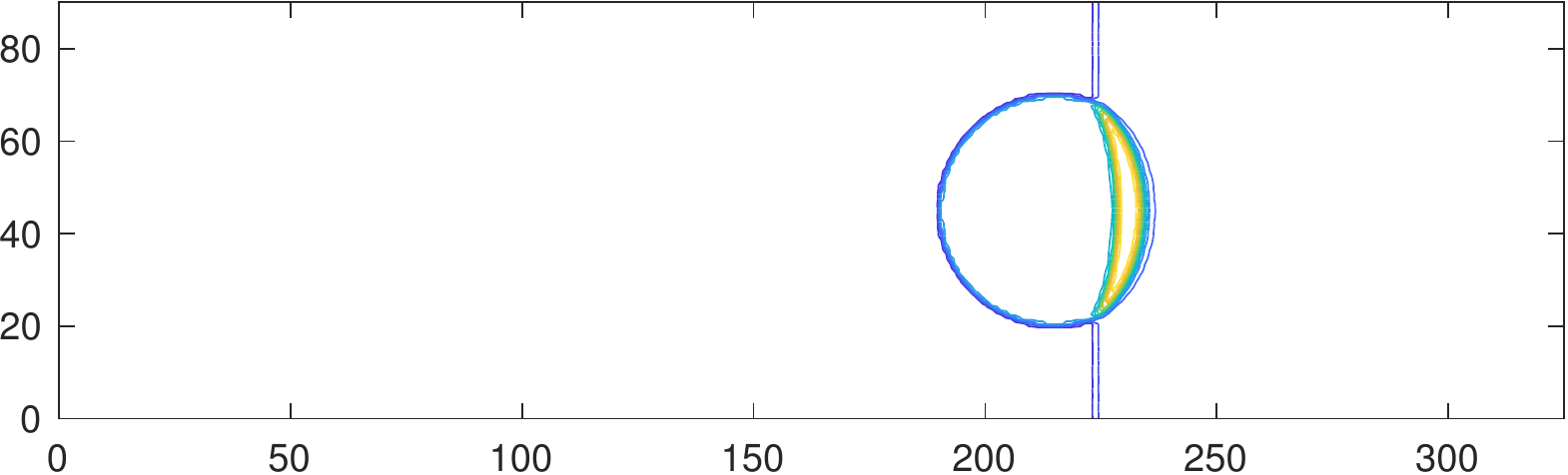}
		\end{minipage}
	}
	\subfigure{
		\begin{minipage}[t]{\textwidth}
			\centering
			\includegraphics[width=0.8\textwidth]{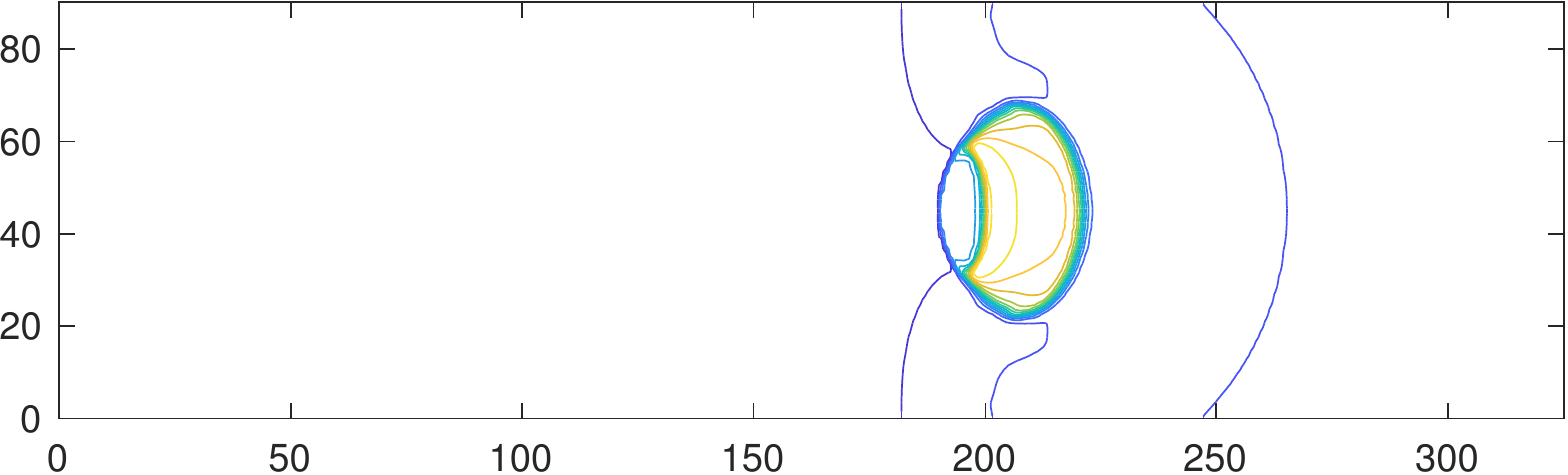}
		\end{minipage}
	}
	\subfigure{
		\begin{minipage}[t]{\textwidth}
			\centering
			\includegraphics[width=0.8\textwidth]{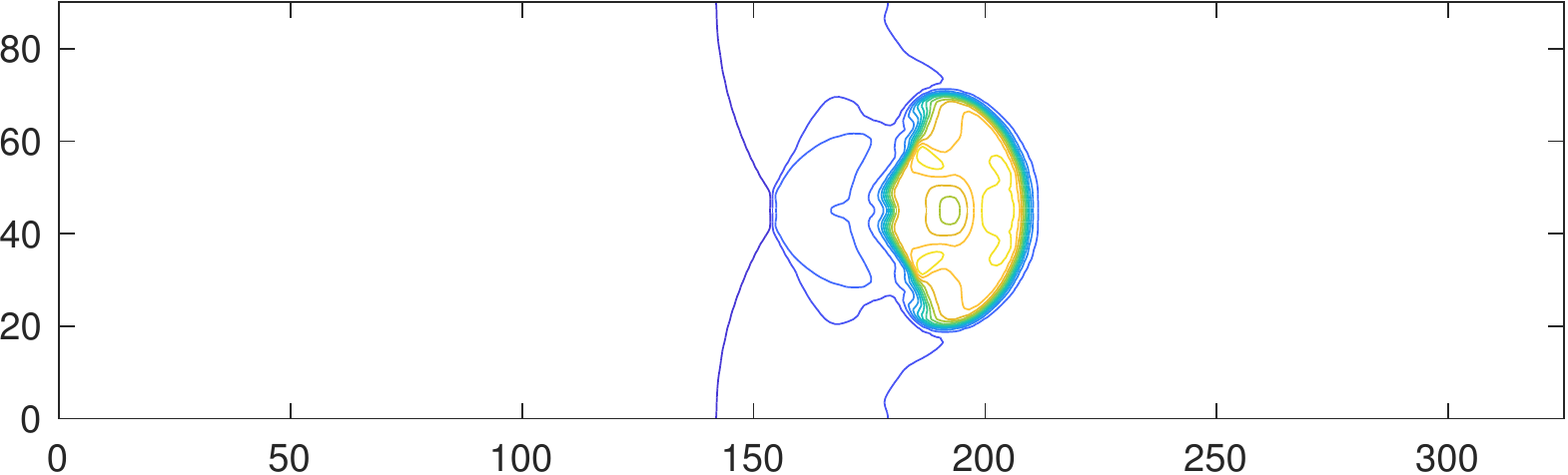}
		\end{minipage}
	}
	\subfigure{
		\begin{minipage}[t]{\textwidth}
			\centering
			\includegraphics[width=0.8\textwidth]{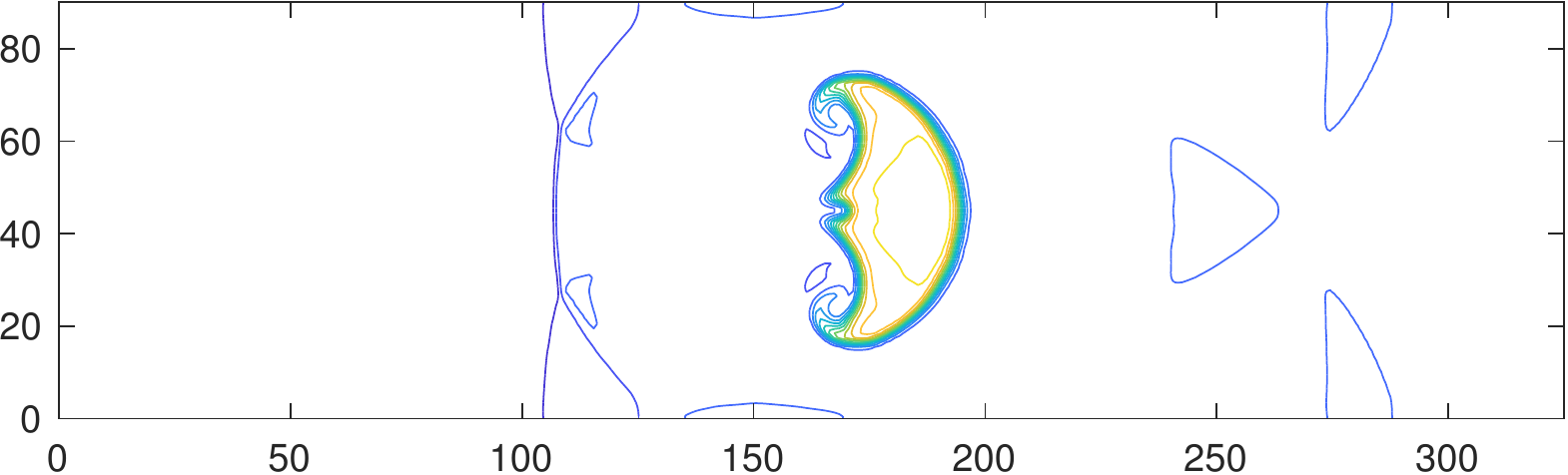}
		\end{minipage}
	}
	\subfigure{
		\begin{minipage}[t]{\textwidth}
			\centering
			\includegraphics[width=0.8\textwidth]{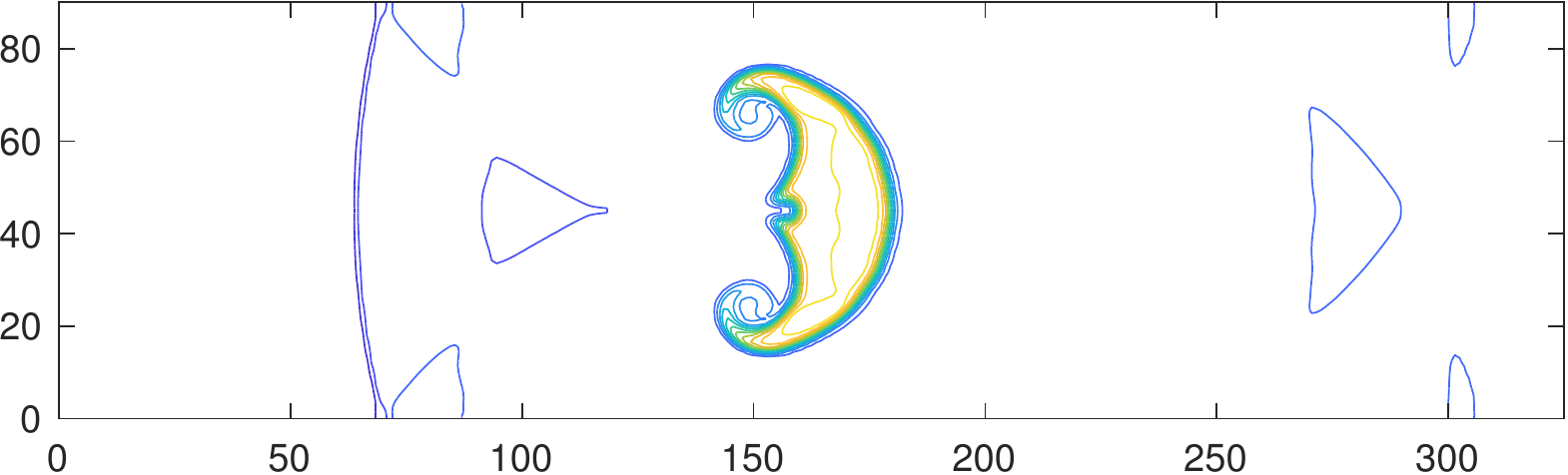}
		\end{minipage}
	}
	\caption{\small{The second problem of Example \ref{example3.10}: the contours of $ \rho$ { at $t=100,200,300,400,500$} with $15$ equally spaced contour lines.}} \label{fig:RP6}
\end{figure}

\section{Conclusions}
\label{Section-conclusion}

The paper   studied the two-stage fourth-order accurate
time discretization \cite{LI-DU:2016} and  its application to the special relativistic hydrodynamical {(RHD)} equations. It was
shown that 
new two-stage fourth-order accurate time discretizations
could be proposed.
The  local ``quasi 1D'' GRP {(generalized Riemann
problem)} of  { the special RHD } equations was
also analytically resolved.
With the aid of the direct Eulerian {GRP}  methods \cite{Yang-He-Tang:2011,Yang-Tang:2012}
and  the analytical resolution of   local ``quasi 1D'' GRP as well as
the adaptive primitive-conservative scheme  \cite{E.F.Toro:2013},
the two-stage fourth-order accurate time discretizations were successfully  implemented
for the 1D and 2D special RHD equations.
The adaptive primitive-conservative scheme was used to reduce the spurious solution  generated by the conservative scheme across the contact discontinuity.
Several  numerical
experiments were conducted to demonstrate the performance and accuracy as well as robustness of our
schemes.

	\appendix
\section{The resolution of quasi 1D GRP of special RHD equations}\label{Appendix:001}
{The   equation}  \eqref{eq:quasi1D} can be {equivalently written} into the  primitive variable form
\begin{equation} \label{eq:quasi1D-pri}
\pa_t \vec V + \widetilde{\vec A}(\vec V) \pa_x \vec V= \vec C=[C_1,
C_2,
C_3,
C_4]^T,
\end{equation}
where
\begin{align*}
C_4=&\frac{1}{1-(u^2+v^2)c_s^2}   [ h_4(\G-1+(u^2+v^2)c_s^2) - \frac{\G-1}{W}h_1 - (c_s^2+\G-1)(uh_2+vh_3) ],\\
C_2=&\frac{1}{\rho h W^2}  [ h_2 - uh_4 - uC_4 ],\
C_3=\frac{1}{\rho h W^2}  [ h_3 - vh_4 - vC_4 ],
C_1
=
\frac{h_1}{W} - W^2\rho(uC_2+vC_3),
\end{align*}
and $\vec h := (h_1,\, h_2,\, h_3,\, h_4)^T$  denotes {the right hand of }  \eqref{eq:quasi1D}.

For the matrix $\widetilde{\vec A}(\vec V)$ given in  \eqref{eq:matrixA},  its eigenvalues and  (right and left) eigenvector matrices can be easily given as follows
\begin{equation} \label{eq:la}
\la_1 = \dfrac{u(1-c_s^2)-c_sW^{-1}H^{1/2}}{1-(u^2+v^2)c_s^2} , \  \la_2=\la_3 = u, \  \la_4 = \dfrac{u(1-c_s^2)+c_sW^{-1}H^{1/2}}{1-(u^2+v^2)c_s^2} ,
\end{equation}
\begin{equation} \label{eq:MatRR}
\widetilde{\vec R}(\vec V)  =
\begin{pmatrix}
\dfrac{\rho W^2}{c_s} & 1 & 0 & \dfrac{\rho W^2}{c_s} \\
-W H^{1/2}& 0 & 0 & W H^{1/2} \\
\dfrac{v(uW H^{1/2}-c_s)}{1-u^2} & 0 & 1 & -\dfrac{v(uW H^{1/2}+c_s)}{1-u^2} \\
\rho h c_sW^2 & 0 & 0 & \rho h c_sW^2
\end{pmatrix},
\end{equation}
and 
\begin{equation} \label{eq:MatLL}
\widetilde{\vec R}^-(\vec V)  = \frac{1}{|\widetilde{\vec R}(\vec V) |}
\begin{pmatrix}
0 &  -\rho hc_s W^2 & 0 & W H^{1/2} \\
|\widetilde{\vec R}(\vec V) |& 0 & 0 & \dfrac{-2\rho W^3H^{1/2}}{c_s} \\
0 & \dfrac{2uv\rho h c_sW^3 H^{1/2}}{1-u^2} & |\widetilde{\vec R}(\vec V)| & \dfrac{2v c_sW H^{1/2}}{1-u^2}  \\
0 & \rho hc_s W^2 & 0 & W H^{1/2}
\end{pmatrix},
\end{equation}
where $|\widetilde{\vec R}(\vec V)| = 2\rho h c_sW^3H^{1/2}$.

For the sake of brevity, we will omit
the notation $( \cdot )_*$ widely used in the direct Eulerian
GRP methods \cite{Han-Li-Tang:2010,Han-Li-Tang:2011,Wu-Tang:2016,Wu-Yang-Tang:2014,Wu-Yang-Tang:2014b,Yang-He-Tang:2011,Yang-Tang:2012}.

\subsection{Resolution of shock wave}
It is very similar to that given in \cite{Yang-Tang:2012} except for $(3.43)${,  because} the source terms   affect
$\frac{D }{D s} := \pa_t + s\cdot \pa_x$ as follows
\begin{equation*}
\begin{cases}
\frac{D u}{D s} = \frac{1-us}{1-u^2}\cdot \frac{D u}{D t} + \frac{u-s}{1-u^2}\cdot\frac{H}{\rho h c_s^2}\cdot \frac{D p}{D t} + \frac{s-u}{(1-u^2)} \cdot \left[ u\, C_2  + \frac{H}{\rho h c_s^2} C_4\right], \\
\frac{D p}{D s} = \frac{1-us}{1-u^2} \cdot \frac{D p}{D t} + \frac{u-s}{1-u^2}\cdot \rho h W^2 \cdot \frac{D u}{D t} + \frac{s-u}{(1-u^2)}\cdot \left[ u\, C_4 + \rho h W^2 C_2 \right],
\end{cases}
\end{equation*}
where $s$ is the shock speed. The present result is given in the following lemma.

\begin{lemma} \label{lem:shock} \rm
	The limiting values of $(Du/Dt)_*$ and $(Dp/Dt)_*$ satisfy
	\begin{equation} \label{eq:shock}
	a_R \, \left(\frac{D u}{D t}\right)_* \;+\; b_R \, \left(\frac{D p}{D t}\right)_* \;=\; d_R,
	\end{equation}
	where
	\begin{equation*}
	A_{us} = u C_2 + \frac{H}{\rho h c_s^2} C_4 , \  A_{ps} = \rho h W^2 C_2 + u C_4 ,
	\end{equation*}
	and
	\begin{equation*}
	\begin{cases}
	a_R = A_u + \frac{u-s}{1-us}\rho h W^2 A_p ,   \\
	b_R = A_p + \frac{u-s}{1-us} \frac{H}{\rho h c_s^2} A_u ,  \\
	d_R = \frac{1-u^2}{1-us} A_{rhs} + \frac{u-s}{1-us} \cdot \left( A_{us}A_u + A_{ps} A_p \right), 	
	\end{cases}
	\end{equation*}
	with $A_u$ and $A_p$ defined in \cite{Yang-Tang:2012}.
\end{lemma}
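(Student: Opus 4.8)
\no The plan is to reuse, almost verbatim, the shock-wave resolution of the direct Eulerian GRP for the source-free relativistic Euler system carried out in \cite{Yang-Tang:2012}, the only modification being that the derivative $\frac{D}{Ds}=\pa_t+s\,\pa_x$ along the shock trajectory must now be tied to the along-fluid derivative $\frac{D}{Dt}$ by the two identities displayed just above the lemma, which carry the extra contributions of the source components $C_2,C_4$ of \eqref{eq:quasi1D-pri}. First I would recall that framework: across the right-moving shock $x=x_s(t)$ with $x_s(0)=0$ one writes the Rankine--Hugoniot relations for the homogeneous flux part $\vec{U}_t+\vec{F}_1(\vec{U})_x=0$ (the bounded source in \eqref{eq:quasi1D} does not enter the jump conditions); since the state ahead of the shock is the known reconstructed data, its derivatives along the shock are explicit; one then differentiates the Rankine--Hugoniot relations with respect to time along the shock curve and passes to the limit at the singularity. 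Exactly as in \cite{Yang-Tang:2012}, eliminating the shock acceleration and the limiting derivative of the density leaves a single linear relation between $(Du/Dt)_*$ and $(Dp/Dt)_*$, with coefficients $A_u,A_p$ and a right-hand side $A_{rhs}$ assembled from the ahead-state slopes --- the source-free analogue of \eqref{eq:shock}.

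In that derivation the only place where the state just behind the shock is converted from its along-shock derivative to its time derivative is equation $(3.43)$ of \cite{Yang-Tang:2012}, and this is exactly where the source enters. Writing $\frac{D}{Ds}-\frac{D}{Dt}=(s-u)\pa_x$ and using the primitive system \eqref{eq:quasi1D-pri} with the matrix $\widetilde{\vec{A}}$ of \eqref{eq:matrixA} to solve for $\pa_x u$ and $\pa_x p$ in terms of time derivatives and of $C_2,C_4$, one recovers precisely the two displayed identities for $Du/Ds$ and $Dp/Ds$; the only new terms relative to $(3.43)$ are $\frac{s-u}{1-u^2}(uC_2+\frac{H}{\rho h c_s^2}C_4)=\frac{s-u}{1-u^2}A_{us}$ and $\frac{s-u}{1-u^2}(\rho h W^2C_2+uC_4)=\frac{s-u}{1-u^2}A_{ps}$. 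Substituting these in place of $(3.43)$ in the elimination of the first paragraph, the homogeneous parts reproduce the \cite{Yang-Tang:2012} coefficients unchanged, so that $a_R=A_u+\frac{u-s}{1-us}\rho h W^2A_p$ and $b_R=A_p+\frac{u-s}{1-us}\frac{H}{\rho h c_s^2}A_u$, while the source parts enter only through the scalar factor $\frac{u-s}{1-us}$ multiplying $A_{us}A_u+A_{ps}A_p$, the homogeneous right-hand side being simultaneously rescaled to $\frac{1-u^2}{1-us}A_{rhs}$. Collecting everything yields $a_R(Du/Dt)_*+b_R(Dp/Dt)_*=d_R$ with $d_R$ as stated, i.e. \eqref{eq:shock}.

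The step I expect to be the main obstacle is this last collecting. One must check that the source terms appear in the final relation only through the single factor $\frac{u-s}{1-us}$ and through the two combinations $A_{us},A_{ps}$, with no spill into the coefficients $a_R,b_R$, and that the homogeneous right-hand side picks up exactly the factor $\frac{1-u^2}{1-us}$. Concretely this means tracking which linear combination of the two modified identities is used in \cite{Yang-Tang:2012} to remove $\pa_x$ from the differentiated Rankine--Hugoniot relations and verifying that it is the same combination here; this is precisely where the asymmetric pairings $uC_2+\frac{H}{\rho h c_s^2}C_4$ and $\rho h W^2C_2+uC_4$ get pinned down. I do not anticipate any new conceptual difficulty, since the source is a linear inhomogeneity that propagates straight through the elimination already performed in \cite{Yang-Tang:2012}.
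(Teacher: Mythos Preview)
Your proposal is correct and follows exactly the approach the paper takes: the paper does not give a detailed proof either, but simply states that the shock resolution is ``very similar to that given in \cite{Yang-Tang:2012} except for (3.43)'', records the two modified $Du/Ds$, $Dp/Ds$ identities carrying the source contributions $C_2,C_4$, and then states the lemma. Your outline --- reuse the Rankine--Hugoniot differentiation and elimination verbatim, replace (3.43) by the source-augmented identities, and collect terms --- is precisely this, and the ``main obstacle'' you flag is just the algebraic bookkeeping the paper leaves to the reader.
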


\subsection{Resolution of   centered rarefaction wave}
With the help of $\widetilde{\vec R}(\vec V)$,  we can easily derive the Riemann invariants
\begin{equation}
\begin{cases}
\mbox{for} \;\la_1 :& \psi_+,\,S,\,V,  \\
\mbox{for} \;\la_2(\la_3): &  u,\,p ,\\
\mbox{for} \;\la_4: & \psi_-,\,S,\,V,
\end{cases}
\end{equation}
where $V=hvW$,
$
\psi_{\pm} = \dfrac{1}{2}\ln\left( \dfrac{1+u}{1-u} \right) \pm \int^p \varphi dp = \dfrac{1}{2}\ln\left( \dfrac{1+u}{1-u} \right) \pm \int^{\rho} \tilde{\varphi} d\rho$,
and
\begin{equation}
\varphi = \dfrac{H^{1/2}}{\rho h c_sW(1-u^2)} = \dfrac{\sqrt{h^2+V^2(1-c_s^2)}}{\rho c_s(h^2+V^2)}, \quad \tilde{\varphi} = h c_s^2 \varphi.
\end{equation}
The Riemann invariants $\psi_{\pm}$ satisfy
\begin{equation}\label{eq:dpsi0}
d \psi_{\pm} = \dfrac{1}{1-u^2}du \pm \left[ \varphi dp + K_S dS + K_V dV \right],
\end{equation}
where
\begin{equation} \label{eq:dpsi1}
\begin{cases}
K_S = \tilde{K}_S - \varphi \frac{\pa p}{\pa S},  \\
K_V = \frac{\pa }{\pa V} \int^p \varphi dp, \\
\tilde{K}_S = \frac{\pa}{\pa S} \int^{\rho} \tilde{\varphi} d\rho, \\
\frac{\pa \varphi}{\pa V} = \varphi V \left( \frac{1-c_s^2}{h^2+V^2(1-c_s^2)} - \frac{2}{h^2+V^2} \right).
\end{cases}
\end{equation}
Note that the lower limits of two integrals in \eqref{eq:dpsi1} may be $\rho_0 > 0$ and $p_0 >0$, respectively.

Thanks to the thermodynamic relation $TdS = de - \frac{p}{\rho^2}d\rho$ and the $\G$-law $p = (\G-1)\rho e$, one has
\begin{equation*}
TdS = \frac{1}{(\G-1)\rho} \left(dp - hc_s^2d\rho\right).
\end{equation*}
Together with $V = hW v$, \eqref{eq:dpsi0} and \eqref{eq:quasi1D-pri}, we  obtain
\begin{equation} \label{eq:quasi1D-char}
\begin{cases}
\pa_t \psi_- + \la_1 \pa_x \psi_- = B_1 - (\la_1-u)\cdot (K_S\pa_x S + K_V \pa_x V), \\
\pa_t \psi_+ + \la_4 \pa_x \psi_+ = B_2 + (\la_4-u)\cdot (K_S\pa_x S + K_V \pa_x V),  \\
T(\pa_t S + u \pa_x S) =  B_3,\\
\pa_t V + u \pa_x V = B_4,  \\
\end{cases}
\end{equation}
where
\begin{equation*}
\begin{cases}
B_1 = \frac{1}{1-u^2}C_2 - \varphi C_4 - \frac{K_S}{T} B_3 - K_V B_4, \\
B_2 = \frac{1}{1-u^2}C_2 + \varphi C_4 + \frac{K_S}{T} B_3 + K_V B_4,  \\
B_3 = \frac{1}{(\G-1)\rho}\cdot \left( C_4 - hc_s^2 C_1 \right)= \frac{1}{\rho}\cdot \left( h_4 - uh_2 -vh_3 - \frac{h}{W} h_1 \right),\\
B_4 = -\frac{hv}{\rho} h_1 + \frac{1}{\rho W}h_3.
\end{cases}
\end{equation*}

Based on the above  preparation, following the procedure in  \cite{Yang-Tang:2012},
one can get the main result of resolving the left rarefaction waves.

\begin{lemma} \label{lem:rarefaction} \rm
	The limiting values of $(Du/Dt)_*$ and $(Dp/Dt)_*$ satisfy
	\begin{equation} \label{eq:rare}
	a_L \, \left(\frac{D u}{D t}\right)_* \;+\; b_L \, \left(\frac{D p}{D t}\right)_* \;=\; d_L,
	\end{equation}
	with
	\begin{equation*}
	\begin{cases}
	a_L =  \frac{1}{1-u^2} ,   \ \
	b_L = \varphi,  \\
	d_L =  \dfrac{\la_4 - u}{\la_4-\la_1}  \left(\frac{\pa t}{\pa \al}\right)^{-1} \frac{\pa \psi_+}{\pa \al}  + \dfrac{u-\la_1}{\la_4-\la_1} \left[ B_2 + (\la_4-u)
	(K_S\pa_x S + K_V \pa_x V) \right] - \frac{K_S}{T}B3 - K_V B_4,	
	\end{cases}
	\end{equation*}
	where $\pa_x S$, $\pa_x V$ and $\frac{\pa \psi_+}{\pa \al}$ are calculated
	by \eqref{eq:dxS}, \eqref{eq:dxV} and  \eqref{eq:dpsi}, respectively.
\end{lemma}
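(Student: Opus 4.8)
The plan is to carry the direct Eulerian GRP analysis of \cite{Yang-Tang:2012} for a left centered rarefaction wave through the characteristic decomposition \eqref{eq:quasi1D-char} of the primitive system \eqref{eq:quasi1D-pri}, now keeping track of the source terms $\vec C$ (equivalently, the inhomogeneous parts of $B_1,\dots,B_4$) that the ``quasi 1D'' reduction introduces. One works at the singularity of the GRP, using that inside the left rarefaction fan the flow is smooth and, to leading order, self-similar, $x/(t-t_n)=\la_1$, so the $\la_1$-characteristics filling the fan can be parametrized by $\al$ with $t=t(\al)$. The first step is to pin down the interior structure of the fan: from the last two equations of \eqref{eq:quasi1D-char}, which transport $S$ and $V=hvW$ at speed $u$, together with the thermodynamic identity $TdS=\frac{1}{(\G-1)\rho}(dp-hc_s^2 d\rho)$ and the generalized Riemann-invariant relation \eqref{eq:dpsi0}, one derives closed expressions for $\pa_x S$, $\pa_x V$ and $\pa\psi_+/\pa\al$, namely \eqref{eq:dxS}, \eqref{eq:dxV} and \eqref{eq:dpsi}. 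This reproduces most of the bookkeeping of \cite{Yang-Tang:2012}, the only new feature being the inhomogeneous right-hand sides $B_3,B_4$ and the source parts of $B_1,B_2$; one must check that the singular limits $t\to t_n+0$ of these quantities exist and take the stated forms.

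Next, since the eigenvalues \eqref{eq:la} satisfy $\la_1<u=\la_2=\la_3<\la_4$, write the material derivative as the convex combination
\begin{equation*}
\frac{D}{Dt}=\pa_t+u\pa_x=\frac{\la_4-u}{\la_4-\la_1}\bigl(\pa_t+\la_1\pa_x\bigr)+\frac{u-\la_1}{\la_4-\la_1}\bigl(\pa_t+\la_4\pa_x\bigr),
\end{equation*}
and apply it to $\psi_+$. The $\la_4$-directional derivative is supplied directly by the second equation of \eqref{eq:quasi1D-char}, i.e.\ $B_2+(\la_4-u)(K_S\pa_x S+K_V\pa_x V)$, while the $\la_1$-directional derivative of $\psi_+$ is its rate of change along the fan characteristics, which in the $\al$-parametrization is identified with $(\pa t/\pa\al)^{-1}\pa\psi_+/\pa\al$. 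This yields a closed formula for $D\psi_+/Dt$ at the singularity.

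Finally, expand $D\psi_+/Dt$ the other way, from the total differential \eqref{eq:dpsi0},
\begin{equation*}
\frac{D\psi_+}{Dt}=\frac{1}{1-u^2}\frac{Du}{Dt}+\varphi\frac{Dp}{Dt}+K_S\frac{DS}{Dt}+K_V\frac{DV}{Dt},
\end{equation*}
and substitute $DS/Dt=B_3/T$ and $DV/Dt=B_4$ from the last two equations of \eqref{eq:quasi1D-char}. Equating the two expressions for $D\psi_+/Dt$ and solving for $\frac{1}{1-u^2}\frac{Du}{Dt}+\varphi\frac{Dp}{Dt}$ produces \eqref{eq:rare} with $a_L=\frac{1}{1-u^2}$, $b_L=\varphi$, and exactly the displayed $d_L$.

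The hard part is the first step: the three transport equations in \eqref{eq:quasi1D-char} are coupled through the sources, so deriving \eqref{eq:dxS}, \eqref{eq:dxV}, \eqref{eq:dpsi} and verifying their singular limits (including matching the head and tail of the fan with the adjacent constant ``star'' state, and handling the sonic case $\la_1=0$ or $\la_4=0$) needs the careful limiting argument of \cite{Yang-Tang:2012} adapted to the inhomogeneous setting. Once those in-fan formulas are in hand, the remaining two steps are elementary linear algebra.
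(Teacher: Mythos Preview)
Your plan is correct and is essentially the paper's own argument: the paper devotes its proof to deriving \eqref{eq:dxS}, \eqref{eq:dxV}, \eqref{eq:dpsi} by solving linear ODEs in $\b$ along $\al=0$ (your ``hard part''), then closes with ``together with \eqref{eq:dpsi0} and \eqref{eq:quasi1D-char}'', which is precisely your convex-combination step made explicit. One small slip in your setup: the $\la_1$-characteristics filling the fan are labeled by $\b$, not $\al$ (with $\al=0$ the singular line and $\pa/\pa\al$ the $\la_1$-directional derivative), though you use this correctly in the identification $(\pa t/\pa\al)^{-1}\pa\psi_+/\pa\al = D_-\psi_+/Dt$.
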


\begin{proof}
	Since
	\begin{align}
	T\frac{\pa S}{\pa \al}  = & \frac{\pa t}{\pa \al} \cdot \left[ B_3 + (\la_1 - u)T\frac{\pa S}{\pa x} \right],   \
	T\frac{\pa S}{\pa \b} =   \frac{\pa t}{\pa \b} \cdot \left[ B_3 + (\la_4 - u)T\frac{\pa S}{\pa x} \right], \label{eq:TS}
	\end{align}
	taking $\al$-derivative to  \eqref{eq:TS} gives
	\begin{equation*}
	\frac{\pa }{\pa \al} \left(  T\frac{\pa S}{\pa \b} \right) = \frac{\pa^2 t}{\pa \b \pa \al} \cdot \left[ B_3 + (\la_4 - u)T\frac{\pa S}{\pa x} \right] + \frac{\pa t}{\pa \b} \cdot \frac{\pa }{\pa \al} \left[ B_3 + (\la_4 - u)T\frac{\pa S}{\pa x} \right].
	\end{equation*}
	Together  with
	\begin{equation*}
	\frac{\pa }{\pa \b} \left(  T\frac{\pa S}{\pa \al} \right) = \frac{\pa T}{\pa \b} \frac{\pa S}{\pa \al} + \frac{\pa^2 S}{\pa \b \pa \al}, \
	\frac{\pa }{\pa \al} \left(  T\frac{\pa S}{\pa \b} \right) = \frac{\pa T}{\pa \al} \frac{\pa S}{\pa \b} + \frac{\pa^2 S}{\pa \b \pa \al},
	\end{equation*}
	setting $\al=0$ gives
	\begin{equation*}
	\frac{\pa }{\pa \b} \left(T\frac{\pa S}{\pa \al}(0,\b)\right) = \left[\frac{\pa \ln T}{\pa \b} + \frac{(\la_4 - u)}{(\la_4 - \la_1)(\la_1 -u)}\right] \bigg|_{\al = 0}
	\left(T\frac{\pa S}{\pa \al}(0,\b)\right)
	+ \frac{B_3}{u-\la_1} \bigg|_{\al = 0}    \frac{\pa t}{\pa \al}(0,\b).
	\end{equation*}
	Hence it holds
	\begin{align*}
	\left(\frac{\pa t}{\pa \al}\bigg|_L\right)^{-1} \left( T\frac{\pa S}{\pa \al} \right)\bigg|_{\al = 0}
	= & \dfrac{T}{T_L}\bigg|_{\al = 0} \mbox{exp}\left(\int^{\b}_{\b_L} \frac{(\la_4 - u)}{(\la_4 - \om)(\om -u)} d\om \right) \Bigg[ B_3^L + (\la_1 - u)|_L T_LS'_L \\
	& +\int^{\b}_{\b_L} \frac{B_3}{u-\b} \dfrac{\pa_{\al} t (0,\b)}{\pa_{\al} t (0,\b_L)} \dfrac{T_L}{T} \mbox{exp}\left(\int^{\b}_{\b_L} \frac{(\la_4 - u)}{(\la_4 - \om)(u-\om)} d\om \right) d\b \Bigg],
	\end{align*}
	where
	\begin{equation*}
	\dfrac{\pa_{\al} t (0,\b)}{\pa_{\al} t (0,\b_L)} = \exp\left(\int^{\b}_{\b_L} \frac{1}{(\la_4 - \om)} d\om\right), \quad \dfrac{T}{T_L}\bigg|_{\al = 0}  = \dfrac{(hc_s^2)|_{\al = 0}}{(hc_s^2)|_L},
	\end{equation*}
	and the subscript $L$ represents $(0,\beta_L)$.
	Therefore, one gets
	\begin{equation} \label{eq:dxS}
	\left( T\frac{\pa S}{\pa x} \right)\bigg|_{\al = 0}  \;=\; \frac{1}{u-\la_1}\bigg|_{\al = 0}\cdot\left[B_3 - \left(\frac{\pa t}{\pa \al}\right)^{-1} T\frac{\pa S}{\pa \al} \right]\bigg|_{\al = 0}.
	\end{equation}
	
	Similarly,  one can calculate $\frac{\pa V}{\pa x} (0,\beta)$ as
	\begin{equation} \label{eq:dxV}
	\frac{\pa V}{\pa x} (0,\beta)\;=\; \frac{1}{u-\la_1}\bigg|_{\al = 0} \cdot\left[B_4 - \left(\frac{\pa t}{\pa \al}\right)^{-1} \frac{\pa V}{\pa \al} \right]\bigg|_{\al = 0},
	\end{equation}
	where
	\begin{align*}
	\left(\frac{\pa t}{\pa \al}\bigg|_L\right)^{-1} \frac{\pa V}{\pa \al} (0,\beta)
	= &  \mbox{exp}\left(\int^{\b}_{\b_L} \frac{(\la_4 - u)}{(\la_4 - \om)(\om -u)} d\om \right) \Bigg[ B_4^L + (\la_1 - u)|_L V'_L, \\
	&+   \int^{\b}_{\b_L} \frac{B_4}{u-\b} \dfrac{\pa_{\al} t (0,\b)}{\pa_{\al} t (0,\b_L)} \mbox{exp}\left(\int^{\b}_{\b_L} \frac{(\la_4 - u)}{(\la_4 - \om)(u-\om)} d\om \right) d\b \Bigg].
	\end{align*}

	Finally, let us calculate $\frac{\pa \psi_+}{\pa \al}(0,\beta)$.
	Since
	\begin{align*}
	\frac{\pa \psi_+ }{\pa \al} =& \frac{\pa t}{\pa \al} \cdot \left[ \frac{D_+  }{D t}\psi_+ + (\la_1 - \la_4)\frac{\pa \psi_+}{\pa x} \right],  \
	\frac{\pa \psi_+ }{\pa \b} =  \frac{\pa t}{\pa \b} \cdot \frac{D_+  }{D t}\psi_+ ,
	\end{align*}
	where $\frac{D_{+} }{D t} = \frac{\pa }{\pa t} + \la_4 \frac{\pa }{\pa x}${, setting} $\al = 0$ gives
	\begin{equation*}
	\frac{\pa }{\pa \b} \left(\frac{\pa \psi_+}{\pa \al}(0,\b)\right) = \bigg( \dfrac{1}{\la_4-\la_1}\frac{\pa t}{\pa \al} \cdot \left[ B_2 + (\la_4-u)\cdot (K_S\pa_x S + K_V \pa_x V) \right] \bigg) \bigg|_{\al=0}.
	\end{equation*}
	Some tedious manipulations yield
	\begin{align} \label{eq:dpsi}
	\left(\frac{\pa t}{\pa \al}\bigg|_L\right)^{-1} & \frac{\pa \psi_+}{\pa \al}(0,\beta)
	= B_2^L + (\la_1-u)|_L \cdot \left[ \dfrac{K_S^L}{T_L}T_L S'_L + K_V^L V'_L \right] \nonumber
	\\ &+ (\la_1-\la_4)|_L \cdot \left[ \dfrac{1}{1-u_L^2} u'_L+ \varphi_L p'_L \right] \nonumber \\
	&+ \int^{\b}_{\b_L} \frac{1}{\la_4 -\om} \dfrac{\pa_{\al} t (0,\om)}{\pa_{\al} t (0,\b_L)} \left[ B_2 + (\la_4-u)\cdot (K_S\pa_x S + K_V \pa_x V) \right] \;d\om .
	\end{align}
	Together with \eqref{eq:dpsi0} and \eqref{eq:quasi1D-char}, the proof of  Lemma \ref{lem:rarefaction} is completed.
	\qed \end{proof}

\subsection{Time derivatives of solutions at singularity point}
Solving the $2\times 2$ linear system formed by \eqref{eq:shock} in Lemma \ref{lem:shock} and \eqref{eq:rare} in Lemma \ref{lem:rarefaction} may give the values of the total
derivatives of the normal velocity and the pressure and the the limiting values of time derivatives $\,(\pa u/\pa t)_*\,$ and $\,(\pa p/\pa t)_*$.

\subsubsection{General case}
\begin{Theorem}
	\rm
	The limiting value $\,(\pa u/\pa t)_*\,$ and $\,(\pa p/\pa t)_*\,$ are calculated as follows.
	\begin{itemize}
		\item[(i)] (Nonsonic case) 
		The limiting values of time derivatives $\,(\pa u/\pa t)_*\,$ and $\,(\pa p/\pa t)_*\,$ can be calculated as
		\begin{equation*}
		\left\{
		\begin{array}{l}
		\pa_t u = \frac{1}{1-u^2} \left[  \frac{Du }{D t} + \frac{uH}{\rho h c_s^2}  \frac{D p}{D t} - u\cdot \left(uC_2+ \frac{H}{\rho h c_s^2}C_4\right)\right],   \\
		\pa_t p = \frac{1}{1-u^2} \left[  \frac{Dp }{D t} + u\rho h W^{2} \frac{D u}{D t} - u\cdot \left(\rho h W^{2}C_2+ uC_4\right)\right].
		\end{array}
		\right.
		\end{equation*}
		
		\item[(ii)] (Sonic case) If assuming that the $t$-axis is located inside the rarefaction wave associated with the $\,\la_1\,$ characteristic family, then
		\begin{equation*} \label{eq:sonic}
		\begin{cases}
		\pa_t u = \frac{1-u^2}{2}\left[ \frac{\la_4}{\la_4-u}\cdot\Big(d_L(0) - \varphi C_4\Big) + \frac{\la_4-2u}{\la_4 - u}\cdot \frac{1}{1-u^2} C_2\right],   \\
		\pa_t p = \frac{1}{2\varphi}\left[ \frac{\la_4}{\la_4-u}\cdot\Big(d_L(0) - \dfrac{1}{1-u^2} C_2\Big) + \frac{\la_4-2u}{\la_4 - u}\cdot \varphi C_4\right].
		\end{cases}
		\end{equation*}
	\end{itemize}
\end{Theorem}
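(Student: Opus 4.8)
The plan is to carry out the two-step reduction announced just above the statement: first pin down the particle-path total derivatives $(Du/Dt)_*$ and $(Dp/Dt)_*$ from the two wave relations, and then convert them to the limiting partial time derivatives $(\pa u/\pa t)_*$ and $(\pa p/\pa t)_*$; the nonsonic and sonic cases share this framework but differ in which two relations are available.

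\textbf{Nonsonic case.} As in the homogeneous direct Eulerian GRP analysis, $u$, $p$ and their particle-path total derivatives are continuous at the singularity, so $(Du/Dt)_*$ and $(Dp/Dt)_*$ are well defined. Equations \eqref{eq:shock} and \eqref{eq:rare} then form a $2\times2$ linear system for this pair (the remaining shock/rarefaction configurations on the two sides being symmetric), which is nonsingular in the nonsonic case, so Cramer's rule delivers $(Du/Dt)_*$ and $(Dp/Dt)_*$. To pass to the time derivatives I would set $s=0$ in the transformation identities for $\frac{Du}{Ds}$ and $\frac{Dp}{Ds}$ displayed at the start of Appendix~\ref{Appendix:001}: these already absorb the coupling to $\pa_x u$ and $\pa_x p$ through the primitive system \eqref{eq:quasi1D-pri}, and $\frac{D}{Ds}\big|_{s=0}=\pa_t$, so the substitution reproduces verbatim the formulas in (i), with $A_{us}=uC_2+\frac{H}{\rho h c_s^2}C_4$ and $A_{ps}=\rho h W^2 C_2+uC_4$.

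\textbf{Sonic case.} Here the $t$-axis lies strictly inside the $\la_1$-rarefaction fan, Lemma~\ref{lem:shock} is unavailable, and its role is played by two relations read off from inside the fan. The first is the $\psi_-$ Riemann-invariant equation — the first line of \eqref{eq:quasi1D-char} — evaluated along the $t$-axis, where $\la_1=0$: expanding $\pa_t\psi_-$ via $d\psi_-=\frac{1}{1-u^2}du-\varphi dp-K_SdS-K_VdV$ and using the $S$- and $V$-transport equations (third and fourth lines of \eqref{eq:quasi1D-char}) to kill the entropy and $V$ contributions, it collapses to $\frac{1}{1-u^2}(\pa_t u-C_2)=\varphi(\pa_t p-C_4)$, i.e. a relation with coefficient pair $(a_L,-b_L)$. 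The second is \eqref{eq:rare} with $d_L$ replaced by its sonic value $d_L(0)$, recast via the $s=0$ identities of Appendix~\ref{Appendix:001} into a relation whose left side is again a multiple of $a_L\pa_t u+b_L\pa_t p$ — this works precisely because $(a_L,b_L)=(\frac{1}{1-u^2},\varphi)$ is a left eigenvector of the matrix converting $(\pa_t u,\pa_t p)$ to $((Du/Dt)_*,(Dp/Dt)_*)$. Normalizing so that the coefficient of $\pa_t u$ is $a_L$ in each, the half-sum isolates $\pa_t u$ and the half-difference isolates $\pa_t p$; simplifying the $\la_1=0$ values of $\frac{\la_4-u}{\la_4-\la_1}$ and $\frac{u-\la_1}{\la_4-\la_1}$ and using $a_L=\frac{1}{1-u^2}$, $b_L=\varphi$ then yields the two formulas in (ii), with the prefactors $\frac{1-u^2}{2}$ and $\frac{1}{2\varphi}$ and the factor $\frac12$ being exactly this averaging.

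\textbf{Main obstacle.} Case (i) is essentially routine once the lemmas and the $\frac{D}{Ds}$ identities are granted. The real work is in case (ii): one must take the sonic limit carefully with the source terms present, verify the cancellation by which the $\frac{K_S}{T}B_3+K_VB_4$ pieces buried inside $B_1$ and $B_2$ are exactly absorbed by $\pa_t S$ and $\pa_t V$ through the $S$- and $V$-transport equations, and check that the reduction $d_L\mapsto d_L(0)$ — and the resulting coefficient pairs $(a_L,\pm b_L)$ — is consistent with the homogeneous derivation of \cite{Yang-Tang:2012}. That cancellation, together with keeping track of which total derivatives survive the limit, is where I expect the genuine difficulty to lie.
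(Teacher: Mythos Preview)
Your proposal is correct and follows the paper's overall strategy. For (i) the paper gives no argument, and your observation that the displayed $\frac{D}{Ds}$ identities (being algebraic consequences of \eqref{eq:quasi1D-pri}, valid for any $s$) specialize at $s=0$ to exactly the formulas in (i) is the intended derivation. For the sonic case (ii) the paper reaches the same $2\times 2$ system for $(\pa_t u,\pa_t p)$ but by a slightly more direct route: the first relation comes from the characteristic compatibility $\frac{D_-u}{Dt}-C_2=\Phi\,(\frac{D_-p}{Dt}-C_4)$ with $\Phi=(1-u^2)\varphi$, which at $\la_1=0$ is immediately \eqref{eq:sonic1} (your $\psi_-$ computation with the $K_S,K_V$ cancellation is an equivalent but longer way to see this); the second relation comes from writing $\frac{D}{Dt}=\frac{\la_4-u}{\la_4-\la_1}\frac{D_-}{Dt}+\frac{u-\la_1}{\la_4-\la_1}\frac{D_+}{Dt}$, substituting into \eqref{eq:rare}, and using the $D_+$ compatibility $\frac{D_+u}{Dt}-C_2=-\Phi\,(\frac{D_+p}{Dt}-C_4)$ to eliminate $\frac{D_+p}{Dt}$---your left-eigenvector observation for $(a_L,b_L)$ is a neat algebraic repackaging of the same cancellation. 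Either way one arrives at the pair $(a_L\pa_t u\mp b_L\pa_t p)=\cdots$, and the half-sum/half-difference gives (ii).
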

\begin{proof}
	Here consider the sonic case. As the $t$-axis is located inside the rarefaction wave associated with the $\,\la_1\,$ characteristic family, then we have $\,\la_1= 0$.
	
	Thanks to
	\begin{equation*}
	\frac{D_{-}u }{D t} - C_2 = \Phi \cdot \left( \frac{D_{-}p }{D t} - C_4 \right),
	\end{equation*}
	with $\Phi = \frac{H^{1/2}}{\rho h c_s W} = (1-u^2) \varphi$ and $\frac{D_{-} }{D t} := \frac{\pa }{\pa t} + \la_1 \frac{\pa }{\pa x}$, one has
	\begin{equation} \label{eq:sonic1}
	\dfrac{1}{1-u^2} \pa_t u - \varphi \pa_t p = \dfrac{1}{1-u^2} C_2 - \varphi C_4.
	\end{equation}
	On the other hand, together with  \eqref{eq:rare}, $\frac{D}{D t}  = \frac{u-\la_1}{\la_4-\la_1} \frac{D_{-} }{D t}  + \frac{\la_4 - u}{\la_4-\la_1}\frac{D_{+}}{D t}$ and
	\begin{equation*}
	\frac{D_{+}u }{D t} - C_2 = - \Phi \cdot  \left( \frac{D_{+}p }{D t} - C_4 \right),
	\end{equation*}
	with $\frac{D_{+} }{D t} := \frac{\pa }{\pa t} + \la_4 \frac{\pa }{\pa x}$,
	one gets
	\begin{equation} \label{eq:sonic2}
	\dfrac{1}{1-u^2} \pa_t u + \varphi \pa_t p = \dfrac{\la_4}{\la_4-u}d_L(0) - \dfrac{u}{\la_4-u} \cdot [\dfrac{1}{1-u^2} C_2 + \varphi C_4].
	\end{equation}
	With the help of \eqref{eq:sonic1} and \eqref{eq:sonic2}, the proof can be completed.\qed
\end{proof}

\begin{Theorem}\rm
	The limiting value $\,(\pa \rho/\pa t)_*\,$ and $\,(\pa v/\pa t)_*\,$ are calculated as follows.
	\begin{itemize}
		\item[(i)] If $\,u_* > 0\,$, they are obtained by
		\begin{align*}
		\left( \frac{\pa \rho}{\pa t} \right)_* =& \frac{1}{hc^2_{1*}} \cdot \left[ \left( \frac{\pa p}{\pa t} \right)_* - \frac{1}{T}\left( \frac{\pa p}{\pa S} \right)_{\rho} \left( B_3 - u_* \left( T\frac{\pa S}{\pa x} \right)_* \right)\right],  \\
		\left( \frac{\pa v}{\pa t} \right)_* =& \frac{-1}{hW (1+W^2 v^2)}\Big[\, W v \pa_t h + VW^2 u \pa_t u + u\pa_x V - B_4 \,\Big],
		\end{align*}
		where $\pa_x V$ is given in \eqref{eq:dxV} and
		$
		dh = \frac{\G}{\G-1}\left(\frac{1}{\rho}dp - \frac{p}{\rho^2}d\rho\right)
		$.
		
		\item[(ii)] If $\,u_* < 0\,$, they are given by
		\begin{align*}
		& \left( \frac{\pa \rho}{\pa t} \right)_* = \dfrac{1}{s-u}\left[ s\left( \frac{D \rho}{D t} \right)_* - u\left( \frac{D \rho}{D s} \right)_* \right], \\
		& \left( \frac{\pa v}{\pa t} \right)_* = \frac{-1}{hW (1+W^2 v^2)}\Big[\, W v \pa_t h + VW^2 u \pa_t u + u\pa_x V - B_4 \,\Big],
		\end{align*}
		where $\left( \frac{D \rho}{D s} \right)_*$ is given in \cite[(3.39)]{Yang-Tang:2012}, and
		\begin{equation*}
		\begin{cases}
		\left( \frac{D \rho}{D t} \right)_*  = \dfrac{1}{hc^2}\cdot \left(\left( \frac{D p}{D t} \right)_* - C_4\right)+ C_1 , \\
		\pa_x V = \frac{1}{s-u_*} \cdot \Big[B_4^R + (s-u_R)\pa_x V_R - B_4\Big], \\	
		\pa_x V_R = \Big[W v \pa_x h + VW^2 u \pa_x u +  hW (1+W^2 v^2) \pa_x v\Big] \Big|_R.
		\end{cases}
		\end{equation*}
	\end{itemize}
\end{Theorem}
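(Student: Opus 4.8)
Since the preceding theorem already furnishes the limiting values $(\pa u/\pa t)_*$, $(\pa p/\pa t)_*$ together with the total derivatives $(Du/Dt)_*$, $(Dp/Dt)_*$ (obtained by solving the $2\times2$ system formed by \eqref{eq:shock} and \eqref{eq:rare}), the two remaining components are recovered from the thermodynamic relations and the two ``passive'' equations of \eqref{eq:quasi1D-pri}, namely the entropy equation $T(\pa_t S+u\pa_x S)=B_3$ and the transport equation $\pa_t V+u\pa_x V=B_4$ for $V=hvW$.

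\emph{The density.} Writing $p=p(\rho,S)$ and using $(\pa p/\pa\rho)_S=hc_s^2$ gives $\pa_t p = hc_s^2\,\pa_t\rho + (\pa p/\pa S)_\rho\,\pa_t S$, so it suffices to determine $\pa_t S$ (equivalently $D\rho/Dt$). If $u_*>0$ the $t$-axis sits on the acoustic side adjacent to the contact, and the entropy equation yields $\pa_t S=\frac1T\big(B_3-u_*(T\pa_x S)_*\big)$, with $(T\pa_x S)_*$ taken from \eqref{eq:dxS}; substituting into the EOS relation produces case (i), the factor $hc_{1*}^2$ being the corresponding sound term at the limiting state. If $u_*<0$ the $t$-axis lies between the contact and the $\la_4$ shock of speed $s$, and I would use the splitting $\pa_t=\frac{1}{s-u}\big(s\,\tfrac{D}{Dt}-u\,\tfrac{D}{Ds}\big)$; here $(D\rho/Dt)_*=\frac{1}{hc_s^2}\big((Dp/Dt)_*-C_4\big)+C_1$ follows by combining $TdS=\frac{1}{(\G-1)\rho}(dp-hc_s^2 d\rho)$ with $T\,DS/Dt=B_3$ and the definition of $B_3$, while $(D\rho/Ds)_*$ is the shock relation \cite[(3.39)]{Yang-Tang:2012}; this gives case (ii).

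\emph{The transverse velocity.} Differentiating $V=hvW$, $W=(1-u^2-v^2)^{-1/2}$, in $t$ and using $\pa_t W=W^3(u\,\pa_t u+v\,\pa_t v)$ gives
\begin{equation*}
\pa_t V = vW\,\pa_t h + VW^2 u\,\pa_t u + hW(1+W^2v^2)\,\pa_t v .
\end{equation*}
Solving for $\pa_t v$ and inserting $\pa_t V=B_4-u\,\pa_x V$ yields the stated formula (the same expression in both cases), where $\pa_t h=\frac{\G}{\G-1}\big(\frac1\rho\pa_t p-\frac{p}{\rho^2}\pa_t\rho\big)$ comes from the $\G$-law and $\pa_x V$ is provided by \eqref{eq:dxV} when $u_*>0$, and when $u_*<0$ by matching across the shock to the reconstructed right-state slope $\pa_x V_R=\big[Wv\,\pa_x h+VW^2 u\,\pa_x u+hW(1+W^2v^2)\,\pa_x v\big]_{R}$, whose ingredients are known from the piecewise reconstruction.

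The one genuinely delicate step is the geometric bookkeeping behind the sign of $u_*$: one must check in each branch that the characteristic carrying $S$ (resp.\ $V$) to the $t$-axis is the intended one---the $\la_1$ rarefaction/shock analysis when $u_*>0$, the $\la_4$ shock when $u_*<0$---so that the one-sided quantities $(T\pa_x S)_*$, $(\pa_x V)_*$, $(D\rho/Ds)_*$ coincide with those already computed in Lemma \ref{lem:rarefaction} and in \cite{Yang-Tang:2012}. Granting that identification, the rest is the chain-rule bookkeeping above.
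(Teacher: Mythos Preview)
The paper states this theorem without proof, so there is no ``paper's own proof'' to compare against; the result is simply asserted after the preceding theorem furnishes $(\pa u/\pa t)_*$ and $(\pa p/\pa t)_*$. Your sketch supplies exactly the missing derivation and is correct in substance.

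A couple of minor remarks. For case~(ii), your justification of the $\pa_x V$ formula could be made a touch more explicit: since $\tfrac{D}{Ds}=\pa_t+s\,\pa_x$ is the derivative along the shock trajectory, the quantity $\tfrac{DV}{Ds}$ must match on both sides, and combining $\tfrac{DV}{Ds}=B_4+(s-u)\,\pa_x V$ on each side yields $\pa_x V=\frac{1}{s-u_*}\big[B_4^R+(s-u_R)\,\pa_x V_R-B_4\big]$ directly. Likewise, the identity $(D\rho/Dt)_*=\frac{1}{hc_s^2}\big((Dp/Dt)_*-C_4\big)+C_1$ is precisely the statement that $(\Gamma-1)\rho\,B_3=C_4-hc_s^2 C_1$, which is the paper's definition of $B_3$; you identify this correctly. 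The only place to be a little careful is that the paper's convention throughout Appendix~A is ``left wave~$=$~rarefaction, right wave~$=$~shock'', so your phrase ``the $\la_1$ rarefaction/shock analysis when $u_*>0$'' should simply read ``the $\la_1$ rarefaction analysis''---the formulas \eqref{eq:dxS}, \eqref{eq:dxV} you cite are derived only in the rarefaction case.
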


\subsubsection{Acoustic case}
When $\vec U_L = \vec U_R = \vec U_*$ and $\vec U'_L \neq \vec U'_R$, we meet the acoustic case.
\begin{Theorem}\rm
	If assuming  {$(\la_1)_* <0$ and $(\la_4)_* >0$}, then $(\pa u/\pa t)_*$ and $(\pa p/\pa t)_*$ can be calculated by
	\begin{equation*}
	\begin{cases}
	\left( \frac{\pa u}{\pa t} \right)_* = \frac{1}{\la_4 - \la_1} \Big( \la_4 \cdot [C_2^L + (M+\la_1-u)|_L u'_L - N_u^L p'_L] - \la_1\cdot [C_2^R + (M+\la_4-u)|_R u'_R - N_u^L p'_R]\Big),  \\
	\left( \frac{\pa p}{\pa t} \right)_* = \frac{1}{\la_4 - \la_1} \Big( \la_4 \cdot [C_4^L + (M+\la_1-u)|_L p'_L - N_p^L u'_L] - \la_1\cdot [C_4^R + (M+\la_4-u)|_R p'_R - N_p^L u'_R]\Big),
	\end{cases}
	\end{equation*}
	where
	\begin{equation*}
	M = u-\dfrac{\la_4 + \la_1}{2}, \quad  N_u = \dfrac{\la_4 - \la_1}{2}\Phi, \quad N_p = \dfrac{\la_4 - \la_1}{2}\Phi^{-1}.
	\end{equation*}
	With the aid of the EOS $p=p(\rho,S)$,
	$(\pa \rho/\pa t)_*$ is calculated by
	\begin{equation*}
	\left( \frac{\pa \rho}{\pa t} \right)_* =
	\begin{cases}
	\frac{1}{h c_s^2}\cdot\left[ \left( \frac{\pa p}{\pa t} \right)_* + u \; \Big(p'_L - (c_s^2 h)|_L \rho'_L \Big) - (\G-1)\rho B_3 \right], & u > 0, \\
	\frac{1}{h c_s^2} \cdot \left[ \left( \frac{\pa p}{\pa t} \right)_* + u\; \Big(p'_R - (c_s^2 h)|_R \rho'_R \Big) - (\G-1)\rho B_3 \right], & u < 0,
	\end{cases}
	\end{equation*}
	and $\,(\pa v/\pa t)_*\,$ is gotton by
	\begin{equation*}
	\left( \frac{\pa v}{\pa t} \right)_* =\frac{-1}{hW(v^2W^2+1)} \cdot
	\begin{cases}
	W v \pa_t h + hvW^3 u\pa_t u - B_4 + u \pa_x V|_L,  & u_* > 0, \\
	W v \pa_t h + hvW^3 u\pa_t u - B_4 + u \pa_x V|_R,  & u_* < 0,
	\end{cases}
	\end{equation*}
	where
	$
	\pa_x V = W v \pa_x h + hvW^3 u\pa_x u + hW(v^2W^2+1) \pa_x v$.
	
\end{Theorem}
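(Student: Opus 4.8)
The guiding observation is that in the acoustic case the GRP degenerates to a \emph{constant‑coefficient} problem: since $\vec U_L=\vec U_R=\vec U_*$, all quantities entering the primitive characteristic system \eqref{eq:quasi1D-char} — the eigenvalues $\la_1,\la_4$, the coupling functions $\varphi,\Phi,K_S,K_V,T$, and the source data $B_1,\dots,B_4,C_2,C_4$ — are frozen at $\vec U_*$, so the characteristics issuing from the singular point $(0,0)$ are the straight lines $x=\la_1 t$, $x=u_*t$, $x=\la_4 t$, and along each of them the associated Riemann invariant solves a linear transport equation with a constant right‑hand side. Hence the limiting state on the $t$‑axis is assembled from three pieces: the $\la_4$‑invariant transported from the initial slope on one side, the $\la_1$‑invariant transported from the initial slope on the other side, and the contact quantities $S$ and $V$ transported from the side selected by $\mathrm{sign}(u_*)$; the hypotheses $(\la_1)_*<0<(\la_4)_*$ guarantee the $t$‑axis lies strictly between the two acoustic characteristics.

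\textbf{Computing $(\pa u/\pa t)_*$ and $(\pa p/\pa t)_*$.} First I would take the first two equations of \eqref{eq:quasi1D-char}, written along $\frac{D_-}{Dt}=\pa_t+\la_1\pa_x$ and $\frac{D_+}{Dt}=\pa_t+\la_4\pa_x$, convert the $\psi_\pm$‑derivatives back to derivatives of $u$ and $p$ through \eqref{eq:dpsi0}, use the acoustic relations $\frac{D_+u}{Dt}-C_2=-\Phi\big(\frac{D_+p}{Dt}-C_4\big)$ and $\frac{D_-u}{Dt}-C_2=\Phi\big(\frac{D_-p}{Dt}-C_4\big)$ valid for the frozen system, integrate along the corresponding straight characteristic to $O(t)$, and let $t\to0^+$. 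Collecting the transport coefficient (which reduces to the combinations $M+\la_1-u$ and $M+\la_4-u$ evaluated at $\vec U_*$), the acoustic coupling through $\Phi$ (producing $N_u=\tfrac12(\la_4-\la_1)\Phi$ and $N_p=\tfrac12(\la_4-\la_1)\Phi^{-1}$), and the source terms $C_2,C_4$, one obtains the two bracketed expressions in the statement — one in terms of $u'_L,p'_L$, the other in terms of $u'_R,p'_R$. Then, because $(\la_1)_*\ne(\la_4)_*$, the operators $\pa_t+\la_1\pa_x$ and $\pa_t+\la_4\pa_x$ are linearly independent, so $\pa_t=\frac{\la_4}{\la_4-\la_1}\frac{D_-}{Dt}-\frac{\la_1}{\la_4-\la_1}\frac{D_+}{Dt}$; applying this identity to $u$ and to $p$ and substituting the two one‑sided expressions yields the claimed formulas for $(\pa u/\pa t)_*$ and $(\pa p/\pa t)_*$.

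\textbf{Computing $(\pa\rho/\pa t)_*$ and $(\pa v/\pa t)_*$.} For the density I would use the EOS $p=p(\rho,S)$, whose differential gives $\pa_t p=hc_s^2\,\pa_t\rho+(\pa p/\pa S)_\rho\,\pa_t S$, together with the entropy equation $T(\pa_tS+u\pa_xS)=B_3$ from \eqref{eq:quasi1D-char}; eliminating $\pa_tS$ and inserting the one‑sided slope $\rho'_L$ when $u_*>0$ (resp.\ $\rho'_R$ when $u_*<0$), since the contact transports $S$ and $\rho$ from that side, reproduces the stated expression containing $(\G-1)\rho B_3$. For the transverse velocity I would differentiate $V=hWv$ in $t$, solve for $\pa_t v$ in terms of $\pa_t h$ — hence of $\pa_t p$ and $\pa_t\rho$ via $dh=\tfrac{\G}{\G-1}\big(\tfrac1\rho dp-\tfrac{p}{\rho^2}d\rho\big)$ — and of $\pa_t u$ and $\pa_x V$, and then use the $V$‑equation $\pa_tV+u\pa_xV=B_4$ of \eqref{eq:quasi1D-char} to replace $\pa_xV$ by $B_4$ and the one‑sided spatial derivative $\pa_xV|_L$ or $\pa_xV|_R$, again according to $\mathrm{sign}(u_*)$; this is precisely the common closed form $\tfrac{-1}{hW(v^2W^2+1)}\big[\,Wv\,\pa_th+hvW^3u\,\pa_tu-B_4+u\,\pa_xV\,\big]$ displayed in the theorem, with $\pa_xV=Wv\,\pa_xh+hvW^3u\,\pa_xu+hW(v^2W^2+1)\pa_xv$.

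The step I expect to be the real work is the first computation together with the left/right accounting throughout: deciding which initial slope is the foot of which characteristic, handling the contributions of $\pa_xS$ and $\pa_xV$ at the edges of the acoustic fans, and reducing the transported Riemann invariants plus the extra source terms $C_2,C_4,B_3,B_4$ inherited from the ``quasi 1D'' reduction \eqref{eq:quasi1D-pri} into the compact constants $M$, $N_u$, $N_p$ without sign errors. Conceptually everything parallels the homogeneous acoustic‑case analysis of \cite{Yang-Tang:2012}; the only genuinely new ingredient is the systematic carrying of the source vector $\vec C$ through the characteristic integrations.
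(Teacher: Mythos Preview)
The paper states this acoustic–case theorem without proof, so there is nothing to compare against directly. Your outline is the natural one and is fully in the spirit of the proofs the paper \emph{does} give in the appendix (notably the sonic case of the preceding theorem) and of the homogeneous acoustic analysis in \cite{Yang-Tang:2012}: freeze all coefficients at $\vec U_*$, work with the characteristic form \eqref{eq:quasi1D-char}, recombine via the operator identity $\pa_t=\frac{\la_4 D_{-}-\la_1 D_{+}}{\la_4-\la_1}$, and then recover $\rho$ and $v$ from the entropy and $V$ transport equations together with $dh=\tfrac{\G}{\G-1}\big(\tfrac1\rho\,dp-\tfrac{p}{\rho^2}\,d\rho\big)$.

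One point worth sharpening is the left/right bookkeeping you already flag as delicate. It is not literally that ``the $\la_1$–characteristic carries data from one side'': on the $t$–axis (which lies in the middle sector because $(\la_1)_*<0<(\la_4)_*$) both $\pa_xu$ and $\pa_xp$ at $(0,0^+)$ are genuine mixtures of left and right slopes via the linearized invariants $w_\pm$. The reason $(D_{-}u)_*$ ends up written purely in $u'_L,p'_L$ is a cancellation: since $D_{-}w_{-}$ is pure source, the $(w_-)'_R$–contributions to $\pa_tu+\la_1\pa_xu$ drop out. Equivalently, $(D_{-}u)$ is continuous across the $\la_4$–characteristic and may therefore be evaluated in the left region directly from the PDE; there one reads off $M=-\tilde a_{22}$, $N_u=a_{24}$, $N_p=a_{42}$ (the off–diagonal entries of $\widetilde{\vec A}$), which is exactly the bracket $C_2+(M+\la_1-u)u'_L-N_u p'_L$. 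Symmetrically for $(D_{+}u)_*$ and the right bracket. Once this is made explicit, the formulas for $(\pa_t\rho)_*$ and $(\pa_tv)_*$ follow exactly as you describe from $T\,\pa_tS=B_3-uT\,\pa_xS$ and $\pa_tV=B_4-u\,\pa_xV$ with the upwind choice of $\pa_xS$, $\pa_xV$ according to $\mathrm{sign}(u_*)$.
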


%
\section*{Acknowledgements}
{The  authors} was partially supported by
the Special Project on High-performance Computing under the National Key R\&D Program (No. 2016YFB0200603),
Science Challenge Project (No. JCKY2016212A502),  and
the National Natural Science Foundation of China (Nos. 91630310 \& 11421101).

\bibliography{ref/journalname,ref/pkuth}
           \bibliographystyle{plain}


\end{document}